\documentclass[a4paper]{amsart}

\usepackage[shortlabels]{enumitem}
\setlist[enumerate]{label={\arabic*.}}

\usepackage[dvipsnames]{xcolor}
\usepackage[backref]{hyperref}
\hypersetup{colorlinks=true,linkcolor=Maroon,citecolor=OliveGreen}
\usepackage{xfrac}
\usepackage{stmaryrd}

\usepackage{enumitem}
\setlist[description]{%
font={\normalfont\itshape}
}

\usepackage{pgfplots}
\pgfplotsset{compat=1.18}

\pgfplotsset{
  every axis/.append style={
    tick label style={font=\small},
    label style={font=\small},
    title style={font=\small},
  }
}

\usepackage{cleveref}

\usepackage[font=footnotesize]{caption}

\usepackage{microtype}

\newtheorem{theorem}{Theorem}[section]
\newtheorem*{theorem*}{Theorem}
\newtheorem{lemma}[theorem]{Lemma}
\newtheorem{proposition}[theorem]{Proposition}
\newtheorem{corollary}[theorem]{Corollary}
\newtheorem{requirement}{Requirement}

\theoremstyle{definition}

\newtheorem{remark}[theorem]{Remark}
\newtheorem*{remark*}{Remark}

\newtheorem*{conjecture*}{Conjecture}
\newtheorem{example}[theorem]{Example}
\newtheorem*{example*}{Example}

\usepackage{amssymb}
\usepackage{amsthm}
\usepackage{mathtools}
\usepackage{amsmath,amsfonts,amsthm,mathtools,commath,amssymb}

\usepackage{todonotes}

\usepackage{bbm}
\def\1{\mathbf{1}}
\def\F{\mathbf{F}}
\def\C{\mathbf{C}}
\def\R{\mathbf{R}}
\def\Z{\mathbf{Z}}

\def\P{\mathbf{P}}
\def\E{\mathbf{E}}

\def\unitary{\mathcal{U}}
\def\bounded{\mathcal{B}}
\def\tors{\mathrm{tors}}
\def\tf{\infty}
\def\Hcal{\mathcal{H}}
\def\pow{\mathrm{pow}}

\def\emb{\mathrm{emb}}
\def\grp{\mathrm{grp}}

\DeclareMathOperator{\tr}{Tr}
\DeclareMathOperator{\ntr}{tr}
\DeclareMathOperator{\fix}{fix}
\DeclareMathOperator{\supp}{supp}
\DeclareMathOperator{\Pref}{Pref}
\DeclareMathOperator{\FE}{FE}
\DeclareMathOperator{\Sym}{Sym}
\DeclareMathOperator{\Alt}{Alt}
\DeclareMathOperator{\std}{\mathrm{std}}

\DeclareMathOperator{\PSL}{PSL}
\DeclareMathOperator{\image}{im}
\DeclareMathOperator{\lcm}{lcm}

\begin{document}
\baselineskip=13pt 

\title[Strong convergence for free products]{Strong convergence of random representations of free products of finite groups}

\author{Marco Barbieri}
\address{Marco Barbieri, Faculty of Mathematics and Physics, University of Ljubljana, Jadranska 19, 1000 Ljubljana, Slovenia}
\email{marco.barbieri@fmf.uni-lj.si}

\author{Urban Jezernik}
\address{Urban Jezernik, Faculty of Mathematics and Physics, University of Ljubljana, Jadranska 19, 1000 Ljubljana, Slovenia / Institute of Mathematics, Physics, and Mechanics, Jadranska 19, 1000 Ljubljana, Slovenia}
\email{urban.jezernik@fmf.uni-lj.si}

\thanks{This work has been supported by the Slovenian Research Agency program P1-0222 and grants J1-70033, J1-50001, J1-4351.}

\begin{abstract}
We extend the polynomial method of Chen--Garza-Vargas--Tropp--van Handel and Magee--Puder--van Handel for operator-norm bounds in random permutation models to the setting where torsion is present. The main new feature is that asymptotic expansion of traces naturally involves fractional powers of $N$ rather than an ordinary Laurent series. We formulate fractional-power analogues of the method's key hypotheses and prove they lead to strong convergence. We verify these analogues for free products of finite groups $\Gamma=G_1*\cdots*G_m$. Concretely, for a uniformly random $\phi_N\in\hom(\Gamma,\Sym(N))$, set $\pi_N=\std\circ\phi_N$, where $\std$ denotes the standard $(N-1)$-dimensional representation of $\Sym(N)$ (the permutation representation with the trivial subrepresentation removed). We deduce strong convergence of $\pi_N$ to the left regular representation of $\Gamma$. As applications, we obtain asymptotically sharp spectral gaps for the associated random Schreier graphs, including almost Ramanujan behavior for $C_2*C_2*C_2$ and an explicit non-Ramanujan limiting spectral radius for $C_2*C_3\cong\PSL_2(\Z)$.
\end{abstract}

\maketitle


\section{Introduction}
\label{sec:intro}

\subsection{Expander graphs}


Expander graphs form a remarkable intersection of combinatorics, geometry, probability, and group theory. A family of finite connected graphs with an increasing number of vertices $N$, each of fixed degree $d$, is called an \emph{expander family} if each adjacency operator $\mathcal{A}$ in the family exhibits a uniform spectral gap between the top eigenvalue $d$ (with Perron-Frobenius eigenvector $\1$ of all ones) and the second-largest eigenvalue in absolute value, which is $\norm{\mathcal A |_{\1^\perp}}$ \cite{lubotzky1995cayley,kowalski2019introduction}. Such families are fundamental tools in both theoretical computer science (derandomization, coding theory, probabilistically checkable proofs, network design) and mathematics (mixing of random walks, growth in groups, sieve methods, topological and geometric applications). For a comprehensive overview, see \cite{hoory2006expander}.

The first explicit constructions of expander families were given by Margulis \cite{gregory1973margulis}, using arithmetic lattices and Kazhdan's property~$(T)$ from representation theory. Subsequently, explicit constructions achieving the largest possible spectral gap were discovered. The Alon--Boppana bound \cite[Theorem 5.3]{hoory2006expander} asserts that in any family of $d$-regular graphs one has
\[
\norm{\mathcal A |_{\1^\perp}} \geq 2\sqrt{d-1} - o(1)\qquad (N\to\infty),
\]
which provides a universal constraint on the spectral gap. An expander family is called \emph{Ramanujan} if for each graph in the family, we have $\norm{\mathcal A |_{\1^\perp}} \le 2\sqrt{d-1}$. Ramanujan families are thus optimal expanders, and the first explicit constructions were obtained by Lubotzky, Phillips, and Sarnak \cite{lubotzky1988ramanujan} and Margulis \cite{margulis1988explicit}, building on deep input from number theory.

\subsubsection*{Expansion in random graphs}
A complementary viewpoint, which is central to this paper, is that one can construct (non-explicit) expander families using randomness. In parallel with Margulis's work, Pinsker \cite{pinsker1973complexity} and independently Barzdin and Kolmogorov \cite{barzdin1993realization} showed that random $d$-regular graphs on $N$ vertices form expander families with high probability, that is $1 - o(1)$, as $N \to \infty$. Later, a far stronger result by Friedman \cite{friedman2003proof} established that random $d$-regular graphs are \emph{almost Ramanujan}, meaning that $\norm{\mathcal A |_{\1^\perp}} \leq 2\sqrt{d-1} + o(1)$ with high probability as $N \to \infty$.

One convenient way to generate random regular graphs is via permutations in the symmetric group $\Sym(N)$. Fix a symmetric generating set $S$ of size $2d$ (e.g.\ $S=\{\sigma_1^{\pm1},\dots,\sigma_{d}^{\pm1}\}$), where the $\sigma_i$ are chosen independently uniformly at random in $\Sym(N)$. The associated \emph{Schreier graph} on vertex set $[N] = \{1,\dots,N\}$ has edges
\[
\{j,\, \sigma_i(j)\}
\qquad 
(1 \leq i \leq d,\ j\in [N]),
\]
and is $2d$-regular (allowing parallel edges and loops). For asymptotic purposes, this model is equivalent to the uniform model of random $2d$-regular graphs \cite{wormwald1999models,janson2011random}. Friedman's proof is performed in this setting and relies on a delicate analysis of traces of high powers of the corresponding adjacency operator $\mathcal{A}$.

More generally, given any finitely generated group $\Gamma=\langle x_1,\dots,x_d\rangle$ and a sequence of finite permutation representations $\sigma_N \colon G_N \to \Sym(N)$, one can form random Schreier graphs as follows. Let $\phi_N\colon \Gamma\to G_N$ be a uniformly random homomorphism, set $\pi_N=\sigma_N\circ \phi_N\colon \Gamma\to \Sym(N)$, and build the Schreier graph on vertex set $[N]$ with edges
\[
    \{ j, \pi_N(x_i)(j) \} 
    \qquad 
    (1\le i\le d,\ j\in [N]).
\]
The random model above used in Friedman's proof is a special case of this, corresponding to $\Gamma=\F_d$ (the free group on $d$ generators) and $G_N=\Sym(N)$ with its natural action on $[N]$. Keeping $\Gamma=\F_d$ but allowing $G_N$ to range beyond $\Sym(N)$, variants of Pinsker's theorem (though not Friedman's almost-Ramanujan conclusion) have been proved for some important sequences of permutation groups. Most notably, expansion occurs with high probability for families of finite simple groups of fixed Lie type acting on themselves by left multiplication by the works of Bourgain and Gamburd \cite{bourgain2008uniform} and Breuillard, Green, Guralnick, and Tao \cite{breuillard2015expansion}.

\subsection{The polynomial method}


Recently, a new approach to operator-norm bounds in random permutation models as described above has emerged, known as the polynomial method by Chen, Garza-Vargas, Tropp, and van Handel \cite{chen2024new}. A thorough exposition of the method is available in \cite{van2025strong}. To put it simply, the method bounds the operator norm of random operators (such as adjacency operators of random Schreier graphs) by applying smooth functional calculus to the operator under the following prerequisites.

\begin{enumerate}[label={\Alph*.}]
    \item \emph{Asymptotic expansion of expected traces:} 
    For any $x \in \C[\Gamma]$ in the group algebra of $\Gamma$, obtain a rational expression for $\E[\tr(\pi_N(x))]$ in terms of $N$ with control over the degrees of the numerator and denominator. This leads to an asymptotic expansion of the expected trace as a Laurent series in $N$.

    \item \emph{Temperedness of leading coefficients:} Let $\lambda \colon \Gamma \to \bounded(\ell^2(\Gamma))$ be the left regular representation of $\Gamma$, given by $\lambda(\gamma)f(x)=f(\gamma^{-1}x)$ for $\gamma,x \in \Gamma$, $f \in \ell^2(\Gamma)$. Bound the growth of the first two leading coefficients in the Laurent series above along powers $x^p$ by at most the growth of $\norm{\lambda(x)}^p$ as $p \to \infty$.
    
    \item \emph{Unique trace property:} The reduced $C^*$-algebra $C_{\mathrm{red}}^\ast(\Gamma)$ is the norm-closure of $\lambda(\Gamma)$. This algebra is equipped with a canonical trace $\tau(a) = \langle a \delta_e, \delta_e \rangle$. This should be the only positive unital tracial functional on $C_{\mathrm{red}}^\ast(\Gamma)$.
\end{enumerate}
Assuming these conditions, the polynomial method yields \emph{strong convergence} of the random representations $\pi_N \colon \Gamma \to \unitary(N)$ to the left regular representation, meaning that for every $x \in \C[\Gamma]$, we have
\[
    \norm{\pi_N(x)} \to \norm{\lambda(x)}
    \quad
    \text{in probability as $N \to \infty$.}
\]

As shown in \cite{chen2024new}, this paradigm can be applied to the setting $\Gamma = \F_d$ and $G_N = \Sym(N)$ with its standard irreducible representation $\std \colon \Sym(N)\to \unitary(N-1)$, obtained by restricting the permutation action on $\C^N$ to the hyperplane $\1^\perp$. The result is strong convergence of the random representations $\pi_N = \std \circ \phi_N$ towards the left regular representation of $\F_d$. For the particular choice $x = \sum_{i=1}^d (x_i + x_i^{-1})$, this implies that the adjacency operator $\mathcal A$ of the random Schreier graph of $\Sym(N)$ acting via $\phi_N$ on $[N]$ satisfies $\norm{\mathcal A |_{\1^\perp}} \to 2 \sqrt{2d-1}$ (by Kesten \cite{kesten1958symmetric}) with high probability as $N \to \infty$. One thus recovers Friedman's theorem that random $2d$-regular graphs are almost Ramanujan.

\subsubsection*{Extensions and limitations of the polynomial method}
Since \cite{chen2024new}, the polynomial method has been extended in various directions. Magee and de la Salle \cite{MageeSalle2024} analyzed the case of $\unitary(N)$ instead of $\Sym(N)$ and demonstrated that strong convergence holds for numerous irreducible representations of $\unitary(N)$. Building on this, Cassidy \cite{cassidy2024random} explored additional irreducible representations of $\Sym(N)$ beyond $\std$ and proved strong convergence in those cases. Moreover, quantitative bounds on the convergence rate produced by the method were developed. This was first done by Chen, Garza-Vargas, and van Handel \cite{chen2024new2}, who specialized the machinery to classical ensembles and proved it can be used to improve the rate of strong convergence to free semicircular distributions in that setting. This was later strengthened by Magee, Puder, and van Handel \cite{magee2025strong} and Hide, Macera, and Thomas \cite{hide2025spectral}. The paper \cite{magee2025strong} substantially broadened the framework of the original polynomial method by replacing the requirement for exact rationality of $\E \left[ \tr \pi_N(x) \right]$ with sufficiently analytic-like asymptotic expansion as a Laurent series in $N$. They apply this to prove strong convergence when $\Gamma$ is the fundamental group of a closed orientable surface of genus at least $2$, with $G_N = \Sym(N)$ and the $\std$ representation. 

However, not all natural random permutation models fit within even the most extended known frameworks. As the following example demonstrates, the presence of nontrivial \emph{torsion} in $\Gamma$ introduces new phenomena in the asymptotic expansions of $\E[\tr(\pi_N(\gamma))]$. Specifically, the asymptotic expansion is no longer a simple Laurent series in $N$ but instead includes fractional powers of $N$.

\begin{example*}
Let $\Gamma = C_2 = \langle \gamma \rangle$, the cyclic group of order $2$. A random homomorphism $\phi_N\colon C_2\to \Sym(N)$ corresponds to choosing a random involution $\sigma \in \Sym(N)$. Then, since $\pi_N = \std \circ \phi_N$,
\[
    \E_{\pi_N} \left[ \tr(\pi_N(\gamma)) \right]
    = \E_{\sigma} \left[ |\fix(\sigma)| \right] - 1.
\]
Let $a_N$ be the number of involutions in $\Sym(N)$. Then, by symmetry,
\[
    \E_{\sigma} \left[ |\fix(\sigma)| \right]
    = N \P(\sigma(1)=1)
    = N a_{N-1} / a_N.
\]
The involution numbers satisfy $a_N = a_{N-1}+(N-1)a_{N-2}$, hence the ratio $r_N = a_N/a_{N-1}$ satisfies $r_N = 1 + (N-1)/r_{N-1}$. From this recursion we obtain that $r_N = \sqrt{N} + O(1)$ as $N \to \infty$. Therefore
\[
    \E_{\pi_N} \left[ \tr(\pi_N(\gamma)) \right]
    = \sqrt{N} + O(1).
\]
\end{example*}

Related torsion-constrained permutation models were previously analyzed by Bordenave and Collins~\cite{bordenave2019eigenvalues}, and by Shen and Wu~\cite{shen2025belyi}. Both works consider models where the generators of $\Gamma$ are mapped to permutations of finite orders, but impose the restriction that these permutations be \emph{fixed-point-free}. 

Bordenave and Collins~\cite{bordenave2019eigenvalues} develop a refined trace method based on non-backtracking operators, which predates the polynomial method. While their method provides yet another alternative proof of Friedman's theorem, it also yields operator-norm limits for random graphs generated by fixed-point-free involutions (equivalently, unions of random perfect matchings). Their fixed-point-free setting is tailored to the permutation models arising from random lifts, and their methods do not readily extend to uniform sampling of involutions in $\Sym(N)$. On the other hand, Shen and Wu~\cite{shen2025belyi} analyzed spectral gaps in random Belyi surfaces constructed from the Brooks--Makover model, which glues hyperbolic triangles according to two random permutations of orders $2$ and $3$, both again restricted to be fixed-point-free in order for the gluing to make sense. In this setup, expected traces turn out to exhibit the same asymptotic expansions with integer powers of $N$ as those in surface groups (see \cite[Proposition 1.2]{shen2025belyi}), thus fitting within the existing frameworks of the extended polynomial method of Magee--Puder--van Handel. 

The fixed-point-free condition is restrictive: for a uniformly random element of order dividing $d$ in $\Sym(N)$, the probability of having no fixed points decays like $\exp(-N^{1/d})$ as $N \to \infty$.\footnote{The counting generating functions are $\exp(P(z))$ for all solutions of $x^d=1$ and $\exp(P(z)-z)$ for fixed-point-free ones, where $P(z)=\smash[b]{\sum_{k\mid d}} z^k/k$. By \cite[Theorem 1]{muller1997finite}, their coefficient ratio is asymptotically $\exp(-N^{1/d})$ up to a constant factor.} In addition, forbidding fixed points removes the torsion-driven leading terms in expected traces in the uniform model. The $\sqrt{N}$ term in the uniform $C_2$ example above is exactly such a fixed-point contribution.

\subsection{Expansion in random graphs with torsion restrictions}

In this paper, we show how to handle general torsion constraints in random permutation models. We impose the relations independently on the generators by taking
\begin{equation}
    \label{eq:Gamma_free_product_of_Gi}
    \Gamma = G_1 * G_2 * \cdots * G_m,
\end{equation}
where each $G_i$ is a finite group and $m \geq 2$. As before, let $\phi_N \in \hom(\Gamma, \Sym(N))$ be chosen uniformly at random, and define $\pi_N = \std \circ \phi_N \colon \Gamma \to \unitary(N-1)$. We are able to push the polynomial method to cover the new phenomena arising from torsion in trace asymptotics and, importantly, verify the resulting hypotheses for free products of finite groups. This yields strong convergence of $\pi_N$ to the left regular representation of $\Gamma$ in our model.

\begin{theorem}[strong convergence for free products of finite groups]
    \label{strong_convergence}
Let $\Gamma$ be a free product of finite groups as in \cref{eq:Gamma_free_product_of_Gi}. Then the random representations $\pi_N = \std \circ \phi_N$ strongly converge to the left regular representation of $\Gamma$. 
\end{theorem}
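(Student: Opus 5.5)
The plan is to run the polynomial method of Chen--Garza-Vargas--Tropp--van Handel, in the Magee--Puder--van Handel form that needs only an asymptotic expansion of traces, after adapting it to expansions in fractional powers of $N$. Let $L=\lcm(|G_1|,\dots,|G_m|)$. I expect the natural expansion variable to be $h=N^{-1/L}$, or some root of $N^{-1}$ whose denominator divides $L$. Concretely, I would verify three fractional analogues of hypotheses A--C for $\Gamma=G_1*\cdots*G_m$.

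For A, the claim is that for each word $\gamma\in\Gamma$ of length at most $q$,
\[
\E[\tr \pi_N(\gamma)] = \sum_{k=0}^{K} c_k(\gamma)\,N^{-k/L} + R_K,
\]
with $c_k$ and $R_K$ polynomially controlled in $q$ and $k$. For B, the first two relevant coefficients after normalization, $c_0(x^p)=\tau(\lambda(x^p))$ and the next nonvanishing term, should grow at most like $\norm{\lambda(x)}^{p}\,p^{O(1)}$. For C, $C^*_{\mathrm{red}}(\Gamma)$ should have a unique trace.

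C is known: free products of finite groups other than $C_2*C_2$ are acylindrically hyperbolic with trivial finite radical, so they are $C^*$-simple with unique trace by Breuillard--Kalantar--Kennedy--Ozawa. The case $C_2*C_2$ is amenable, but there the strong convergence target has to be handled separately, or the method must be rephrased so that C is only needed through the weak limit. I would cite this case and treat the dihedral exception by direct computation.

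The abstract step is then to redo the master inequality with $h=N^{-1/L}$ in place of $1/N$. For a polynomial test function $f$ of degree $q$, one writes $\E\,\tr f(X_N)=\Phi_f(h)$. The Markov-brothers argument, applied in the variable $h$, bounds the derivatives of $\Phi_f$ on $[0,N_0^{-1/L}]$, so errors lose only powers $q^{O(1)}$. This yields a bound of the form
\[
\E\,\tr f(X_N)=\nu_0(f)+h^{a}\nu_1(f)+O\!\left(\frac{q^{C}}{N^{(a+1)/L}}\right)\|f\|_{\infty}.
\]
Temperedness (B) forces $\supp \nu_0,\supp\nu_1\subseteq[-\norm{\lambda(x)},\norm{\lambda(x)}]$. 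A smooth test function vanishing on that interval then gives $\P(\norm{\pi_N(x)}>\norm{\lambda(x)}+\epsilon)\to0$. The lower bound follows from weak convergence, which is the $\nu_0$ statement, together with the fact that $\lambda$ is faithful on $C^*_{\mathrm{red}}$ via C.

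For A, I would compute $\E[\tr\pi_N(\gamma)]$ combinatorially. Choosing $\phi_N$ uniformly means choosing independent uniform homomorphisms $G_i\to\Sym(N)$, i.e.\ independent uniform $G_i$-sets on $[N]$. The expected number of fixed points of a reduced word is a sum over "core" Schreier graph quotients $Y$ of the word, weighted by the probability that $Y$ embeds. For a single $G_i$-set, $|\hom(G_i,\Sym(N))|$ is the $N$-th coefficient of $\exp\bigl(\sum_{H\le G_i} z^{[G_i:H]}/|N_{G_i}(H)|\cdot(\dots)\bigr)$. Müller's saddle-point asymptotics give ratios $|\hom(G_i,\Sym(N-k))|/|\hom(G_i,\Sym(N))|$ with full expansions in $N^{-1/|G_i|}$, analytically controlled in $k\le N^{c}$. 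Multiplying these, the embedding probability of $Y$ is $N^{-\chi\text{-like}}$ times a series in $h$, where the exponent counts orbits of non-free type. The leading term is the trace of $\lambda$, which is $0$ for $\gamma\ne e$; fractional corrections come from fixed points and short orbits, as in the $C_2$ example.

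The main obstacle is the uniformity in A: controlling the remainder for words of length $q$ up to $N^{c}$, and bounding the number of quotients $Y$ so that the coefficients grow only polynomially in $q$. This is the analogue of the delicate part of Magee--Puder--van Handel. I would try first a Möbius-type inversion over $Y$, together with a saddle-point error that is uniform in the number $k\lesssim q$ of removed points.

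For B, the leading coefficient is $\tau$, which is trivially tempered. For the next coefficient I would express it as a sum over the minimal $Y$ (a single non-free orbit), which amounts to counting $g$ with $\gamma$ conjugate into a finite $G_i$-coset structure. This is bounded by $\sum_i\sum_{H}\langle\lambda(x^p)\1_{H},\1_{H}\rangle$-type quantities for finite subgroups $H$, hence by $|H|\norm{\lambda(x)}^p$.
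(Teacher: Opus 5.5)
\paragraph{The gap: keeping only two coefficients.} Your architecture is the paper's: a master inequality in the variable $N^{-1/\mu}$ via Markov-type derivative bounds, M\"uller/Harris--Schoenfeld asymptotics fed into a sum over quotients of the word's core graph, and unique trace from Breuillard--Kalantar--Kennedy--Ozawa with $C_2*C_2$ done by hand. But your fractional version of hypothesis B keeps too few coefficients, and the final step fails. The probability is bounded by
\[ \E[\tr f(\pi_N(x))]=N\,\E[\ntr f(\pi_N(x))], \]
so you need the \emph{normalized} expansion with error $o(N^{-1})$. Your expansion $\nu_0(f)+h^a\nu_1(f)+O(N^{-(a+1)/L})$, with $\nu_0(f)=\nu_1(f)=0$, only gives
\[ \P(\norm{\pi_N(x)}>\norm{\lambda(x)}+\epsilon)=O(N^{1-(a+1)/L}), \]
which is useless unless $a\ge L$. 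With torsion, the first correction appears strictly before $N^{-1}$. An element of order $d\ge 2$ has $\E[\ntr\pi_N(\gamma)]\sim N^{-1+1/d}$, so for $C_2*C_3$ (where $L=6$) you get $a=3$ and a bound of $O(N^{1/3})$. The fix is exactly Requirement~\ref{requirement:temperedness} and Proposition~\ref{master_inequality}. You must keep all of $u_0,u_1,\dots,u_\mu$, applying the Markov-brothers estimate to derivatives up to order $\mu+1$ to reach error $O(N^{-1-1/\mu})$. You must then prove that \emph{every} $u_k$ with $1\le k\le\mu$ is tempered.

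\paragraph{Consequences for B.} The $u_k$ with $k<\mu$ are supported on torsion elements, but $u_\mu$ also sees elements of infinite order. By Puder--Zimhoni, $u_\mu(\gamma)=\abs{\Hcal_\gamma}-1$ for such $\gamma$, where $\Hcal_\gamma$ is the set of subgroups containing $\gamma$ isomorphic to $\Z$ or $C_2*C_2$. Your sketch (one non-free orbit, a fixed finite subgroup $H$) never addresses this term. The paper writes $\gamma=\delta^d$ with $\delta$ not a proper power and computes $\abs{\Hcal_\gamma}$ as $\omega(d)$ or $\omega(d)+\sigma(d)$. It then bounds by $p^{O(1)}\norm{\lambda(x)}^p$ the hitting probabilities of proper $d$-th powers (only $d\le p\abs{x}$ occur, with weight $O(d^2)$) and of products of two conjugated involutions. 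This is done by splitting the walk and applying Cauchy--Schwarz to the sum over conjugators. The same splitting is needed for torsion elements, because they range over infinitely many conjugates of the finite factors; a quantity like $\langle\lambda(x^p)\1_H,\1_H\rangle$ for a fixed $H$ does not control them. These estimates require nonnegative coefficients, so you also need the Magee--de la Salle positivization trick (via rapid decay of $\Gamma$) to reach arbitrary self-adjoint $x$.

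\paragraph{A smaller point on A.} Polynomial control of $c_k$ and $R_K$ in $k$ is neither needed nor to be expected: the counts of quotients and the Harris--Schoenfeld coefficients are controlled only by $s^{O(s)}$. The Magee--Puder--van Handel framework tolerates exactly an error $(Cs)^{Cs}N^{-s/\mu}$ for $s\ge\abs{\gamma}$ and $N\ge Cs^C$, and that is what the paper proves.
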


Before discussing the methods in more detail, we highlight two special cases covered by Theorem~\ref{strong_convergence} where the limiting spectral radius can be computed explicitly, thus yielding asymptotically explicit spectral gaps for the random Schreier graphs.

\begin{example*}
Consider $\Gamma = C_2 * C_3 \cong \PSL_2(\Z)$, the modular group, with generators $x$ and $y$ of orders $2$ and $3$, respectively. Random homomorphisms $\phi_N$ correspond to choosing a random involution and a random element of order $3$ in $\Sym(N)$ (compare with \cite{shen2025belyi}, where the permutations are required to be fixed-point-free). The associated Schreier graphs are random $3$-regular graphs. By Theorem~\ref{strong_convergence}, their second-largest eigenvalues converge with high probability to $\norm{\lambda(x + y + y^2)}$. Adjacency operators in free products of two finite groups are well understood by the work of Cartwright and Soardi \cite{cartwright1986random}. It is possible to determine their spectral measure explicitly (see the curious Figure \ref{fig:spectral-measure_C2C3}), and in particular to compute the spectral radius. In the present case, it is given by
\[
    \frac{1 + \sqrt{13 + 8 \sqrt{2}}}{2}
    \approx 2.9654.
\]
This value exceeds the Ramanujan bound for $3$-regular graphs, which is $2\sqrt{2} \approx 2.8284$. Thus, we obtain a simple random model for generating non-Ramanujan $3$-regular graphs with an explicit spectral gap.

\begin{figure}[t]
\includegraphics[width=0.6\textwidth]{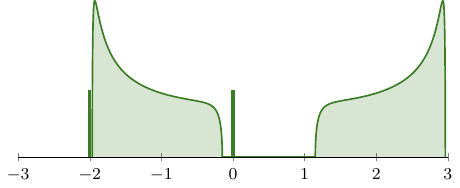}
\caption{The spectral measure of the adjacency operator of $C_2*C_3$. The support of the measure consists of the union of two intervals with boundary points at $(1 \pm \sqrt{13 + 8 \sqrt{2}})/2$ and $(1 \pm \sqrt{13 - 8 \sqrt{2}})/2$, together with discrete atoms at $-2$ and $0$, each with mass $1/6$. These atoms correspond to torsion-induced $\ell^2$-eigenfunctions $f$ of the adjacency operator $\mathcal A f(g)=f(xg)+f(yg)+f(y^2g)$. Imposing that $f$ has sum zero on each $C_3$-triangle $\{g,yg,y^2g\}$ reduces the eigenvalue equation $\mathcal A f = \lambda f$ to $f(xg)=(\lambda+1)f(g)$, and the relation $x^2=1$ forces $\lambda+1 = \pm1$.}
\label{fig:spectral-measure_C2C3}
\end{figure}
\end{example*}

\begin{example*}
Consider $\Gamma = C_2 * C_2 * \cdots * C_2$ with $d \geq 3$ copies of $C_2$. Let $x_1, x_2, \dots, x_d$ denote the corresponding generators of order $2$. The Cayley graph of $\Gamma$ with respect to these generators is the $d$-regular tree, whose adjacency operator has spectral radius $2\sqrt{d-1}$. By Theorem~\ref{strong_convergence}, random Schreier graphs generated by $d$ uniformly random involutions in $\Sym(N)$ (allowing fixed points) are almost Ramanujan with high probability as $N \to \infty$. These graphs can be interpreted as unions of $d$ random matchings, which need not be perfect (compare with \cite{bordenave2019eigenvalues}). More generally, one can analyze settings where the adjacency operator on $\Gamma$ is weighted by assigning different weights to the generators $x_i$. In such cases, the spectral radius can be computed in closed form in terms of these weights and $d$ (see \cite[Example~1]{AomotoKato1988}).

The case when $\Gamma = C_2 * C_2$ is rather different. This group is amenable, implying that its adjacency operator has spectral radius $2$. The corresponding random Schreier graphs are generated from two random involutions, so they are unions of cycles on $N$ vertices. With high probability as $N \to \infty$, we get a disconnected graph, hence $\norm{\mathcal A |_{\1^\perp}} = 2$. Strong convergence here comes for free.
\end{example*}

Except for $C_2 * C_2$, none of the groups in \cref{eq:Gamma_free_product_of_Gi} is amenable. By selecting a symmetric generating set $S$, the corresponding adjacency operator $\sum_{s \in S} s$ has a norm strictly less than $|S|$. As a direct consequence of strong convergence, this implies that, with high probability as $N \to \infty$, the random Schreier graphs in this model are connected. Equivalently, a random homomorphism $\phi_N \colon \Gamma \to \Sym(N)$ has a transitive image with high probability. This phenomenon is well-known for groups as in \cref{eq:Gamma_free_product_of_Gi} and even for broader families, as proved by Liebeck and Shalev \cite[Theorem 1.12]{liebeck2004fuchsian}. Their methods for proving transitivity rely on character bounds for $\Sym(N)$ and an analysis of the associated representation zeta function. 
Strong convergence provides a complementary spectral perspective on these results.\footnote{Using the classification of finite simple groups, Liebeck and Shalev further strengthen this, showing that $\langle \image \phi_N \rangle \geq \Alt(N)$ with high probability as $N \to \infty$. The specific models considered in our examples were treated earlier in \cite{liebeck1996classical}. Circumventing the use of classification via spectral methods would require dealing with all irreducible representations of $\Sym(N)$ instead of just $\std$.}

\subsection{Methods}

We now explain how the polynomial method can be pushed further to cover asymptotic expansions with fractional powers of $N$, and then how we verify the resulting requirements.

\subsubsection*{Polynomial method beyond integer exponents}

We follow the framework of Magee, Puder, and van Handel \cite{magee2025strong}. Let $\Gamma$ be a finitely generated group with a symmetric generating set $S$. For any $\gamma \in \Gamma$, let $\abs{\gamma}$ denote the word length of $\gamma$ with respect to $S$. For each $N$, consider a random representation $\pi_N = \std \circ \phi_N$ as above, and let $\ntr(\pi_N(\gamma)) = \tr(\pi_N(\gamma)) / N$ denote the normalized trace.

We first discuss the requirement for the asymptotic expansion of $\E[\ntr(\pi_N(\gamma))]$. In \cite{magee2025strong}, the authors identify an analytic-like property of this expansion: for any specified precision $s \geq \abs{\gamma}$, a Laurent series truncated at $s-1$ terms approximates the expected trace with an error of $O(s)^{O(s)} N^{-s}$. In our setting, we require similar control, but with $N$ replaced by $N^{1/\mu}$, where $\mu$ is a positive integer depending on $\Gamma$. Here is the precise formulation.

\begin{requirement} \label{requirement:asymptotic_expansion}
There exists a constant $C\ge 1$ such that for every $\gamma\in \Gamma$, every $s \ge |\gamma|$, and every $N \ge Cs^C$, there exist constants $u_k(\gamma)\in \R$ for all non-negative integers $k$ such that $u_0(\gamma)=\tau(\lambda(\gamma))$ and
\[
\left| \E\!\left[ \ntr \pi_N(\gamma) \right] - \sum_{k=0}^{s-1} \frac{u_k(\gamma)}{N^{k/\mu}} \right|
\le \frac{(Cs)^{Cs}}{N^{s/\mu}} .
\]
\end{requirement}

In the second ingredient of the polynomial method, it is crucial to control the values of $u_k$. Note that each $u_k(\gamma)$ is uniquely determined by the approximation above. Moreover, since the trace can be expressed as the number of fixed points, we have $u_k(\gamma)=u_k(\gamma^{-1})$, and by conjugation invariance of the trace, the $u_k$ are class functions. We extend the $u_k$ linearly to $\C[\Gamma]$. Following \cite{MageeSalle2024}, we say a functional $u\colon \C[\Gamma]\to \C$ is \emph{tempered at $x\in \C[\Gamma]$} if
\[
\limsup_{p\to\infty} |u(x^p)|^{1/p}\le \|\lambda(x)\|.
\]
The polynomial method of \cite{chen2024new} and \cite{magee2025strong} requires temperedness of the functional $u_1$. In our setting, we require temperedness of all the first $\mu$ functionals.

\begin{requirement} \label{requirement:temperedness}
For every positive integer $k \leq \mu$, the functional $u_k$ is tempered at all self-adjoint elements of $\C[\Gamma]$. 
\end{requirement}

Under these requirements, we can extend the polynomial method following the approach of \cite{magee2025strong} to obtain \emph{upper} bounds on the operator norm of $\pi_N(x)$ for every $x \in \C[\Gamma]$.

\begin{theorem} \label{thm:main}
Suppose that \Cref{requirement:asymptotic_expansion} and \ref{requirement:temperedness} hold.
Then for every $x\in\C[\Gamma]$,
\[
    \norm{\pi_N(x)} \leq \norm{\lambda(x)} + o(1)
    \quad
    \text{with high probability as $N \to \infty$}.
\]
\end{theorem}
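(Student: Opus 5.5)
We adapt the argument of \cite{magee2025strong} with the variable $N^{-1}$ replaced by $h = N^{-1/\mu}$. First we reduce to self-adjoint $x$: since $\norm{\pi_N(x)}^2 = \norm{\pi_N(x^*x)}$ and $\norm{\lambda(x)}^2=\norm{\lambda(x^*x)}$, it suffices to treat $x=x^*$. Multiplying $x$ by a scalar, we may assume all coefficients are real and bounded, and that $\supp x$ lies in a ball of radius $r$.

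Fix $\varepsilon>0$ and choose a smooth function $f\colon\R\to[0,1]$ that vanishes on $[-\norm{\lambda(x)}-\varepsilon/2,\norm{\lambda(x)}+\varepsilon/2]$ and equals $1$ outside $[-\norm{\lambda(x)}-\varepsilon,\norm{\lambda(x)}+\varepsilon]$. Then
\[
\P\bigl(\norm{\pi_N(x)}\ge \norm{\lambda(x)}+\varepsilon\bigr)\le \E[\tr f(\pi_N(x))] = N\,\E[\ntr f(\pi_N(x))],
\]
so it is enough to show $\E[\ntr f(\pi_N(x))]=o(N^{-1})$.

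For any polynomial $P$ of degree $q$, $\ntr P(\pi_N(x))$ is a combination of $\ntr\pi_N(\gamma)$ with $\abs{\gamma}\le rq$. \Cref{requirement:asymptotic_expansion} with $s=rq$ then gives
\[
\E[\ntr P(\pi_N(x))]=\sum_{k=0}^{\mu}u_k(P(x))h^k + R,
\qquad \abs{R}\lesssim (Cq)^{Cq}\,h^{\mu+1}\norm{P}_{\mathrm{coeff}},
\]
provided we combine the terms $k>\mu$ into the remainder. Following \cite{chen2024new}, the function $h\mapsto \E[\ntr P(\pi_N(x))]$ is only defined on the discrete set $h=N^{-1/\mu}$, but it is approximated by a polynomial in $h$ of degree $O(q)$ up to error $(Cq)^{Cq}h^{q}$. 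The Markov brothers' inequality (in the form used in \cite{magee2025strong}, applied on an interval $[0,c q^{-C}]$ in $h$) then controls the derivatives of this polynomial. This yields, for each $k\le\mu$, a bound $\abs{u_k(P(x))}\le (Cq)^{Ck}\sup_{[-K,K]}\abs{P}$, where $K$ bounds $\norm{\pi_N(x)}$ deterministically, together with a uniform remainder estimate $\abs{R}\le (Cq)^{C(\mu+1)}h^{\mu+1}\sup\abs{P}$.

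These bounds say that each $u_k$ extends to a compactly supported distribution on $[-K,K]$ of order $O(k)$. Combined with \Cref{requirement:temperedness}, a standard Paley--Wiener/Chebyshev argument as in \cite[Lemma on temperedness]{MageeSalle2024} shows that each $u_k$, $1\le k\le\mu$, is supported in $[-\norm{\lambda(x)},\norm{\lambda(x)}]$. The same holds for $u_0=\tau\circ\lambda$. We then apply the expansion to $f$ by approximating $f$ with Chebyshev polynomials of degree $q$, whose error decays faster than any power since $f$ is smooth. Since $f$ vanishes near the support of every $u_k$ with $k\le\mu$, we get
\[
\E[\ntr f(\pi_N(x))]=O\bigl(h^{\mu+1}\,\mathrm{polylog}\bigr)=O(N^{-1-1/\mu}\,\mathrm{polylog}) = o(N^{-1}),
\]
which is exactly what was needed.

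The main obstacle is the extrapolation step. Requirement 1 only holds for $N\ge Cs^C$, so the truncation degree $q$ must grow like a small power of $N$ (in \cite{magee2025strong}, $q\sim N^{c}$ gives $(Cq)^{Cq}h^{\mu+1}$ acceptable only after trimming via the Chebyshev decay of $f$). Each term in $h$ is weaker by a factor $N^{1/\mu}$ than in the integer case, which is why all $\mu$ leading coefficients, not just $u_1$, must be killed by support considerations. We would first try to replicate the Markov-inequality interpolation of \cite{magee2025strong} verbatim in the variable $h$, with $q$ polylogarithmic in $N$ and $f$ chosen in a Gevrey class to make the Chebyshev tail subpolynomial.
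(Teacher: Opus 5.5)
Your proposal is correct and follows essentially the same route as the paper. Both arguments obtain a master inequality in the variable $N^{-1/\mu}$ from \Cref{requirement:asymptotic_expansion} and the interpolated Markov inequality, extend $u_1,\dots,u_\mu$ to compactly supported distributions that temperedness confines to $[-\norm{\lambda(x)},\norm{\lambda(x)}]$, choose a smooth bump that annihilates $\tau$ and all $\nu_1,\dots,\nu_\mu$, and finish with the $y^*y$ reduction. The worry in your last paragraph is unnecessary: your Markov step already gives a remainder of size $q^{O(1)}N^{-1-1/\mu}\norm{P}_{[-K,K]}$, valid for all $N$, so you can sum over the full Chebyshev expansion of an ordinary $C^\infty$ bump, as the paper does, with no $N$-dependent truncation degree or Gevrey-class choice.
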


The proof in fact yields a quantitative form of this bound.\footnote{For each fixed $x\in\C[\Gamma]$ and $\varepsilon>0$, the probability for the event $\norm{\pi_N(x)} \geq \norm{\lambda(x)} + \varepsilon$ decays like $\epsilon^{-O(1)} N^{-1/\mu}$ as $N\to\infty$, where $\mu$ is the integer from \Cref{requirement:asymptotic_expansion}. See Remark~\ref{remark:rate_of_convergence}.} To achieve strong convergence, it remains to establish an analogous \emph{lower} bound. However, under the unique trace property (requirement C of the original polynomial method), this lower bound follows automatically from the upper bound via a general $C^*$-algebraic argument, as explained in \cite[Section~2.3]{van2025strong}. Fortunately, this property holds for many groups. By \cite{Breuillard2017}, the unique trace property is equivalent to the group having trivial amenable radical, which includes every free product of finite groups as in \cref{eq:Gamma_free_product_of_Gi} apart from the exceptional case $C_2 * C_2$ (which was addressed in the example above). Thus, strong convergence in \Cref{strong_convergence} follows once we verify the two requirements above in our setting.

\subsubsection*{Verifying \Cref{requirement:asymptotic_expansion}}

Let $\Gamma$ be a free product of finite groups as in \cref{eq:Gamma_free_product_of_Gi}, and let $\gamma \in \Gamma$. The first step in obtaining a suitable asymptotic expansion for $\E[\tr \pi_N(\gamma)]$ is to consider the local case when $\gamma$ belongs to one of the free factors $G_i$. In this scenario, the expected trace depends solely on the distribution of the image of $G_i$ under a uniformly random homomorphism $G_i \to \Sym(N)$. For instance, if $G_i = C_2$ with generator $\gamma$, then $\E[\tr \pi_N(\gamma)] = N a_{N-1} / a_N$, as shown in the previous example. Asymptotic expansions for the involution numbers were obtained by Moser and Wyman \cite{moser1955solutions}. These asymptotics arise from analyzing the generating function $\exp(z + z^2/2)$ at its dominant singularity. This was pushed much further by Müller \cite{muller1997finite}, building on more general work by Harris and Schoenfeld \cite{harris1968asymptotic}, who provided an asymptotic expansion for any element $\gamma$ in any finite group, in particular in any free factor $G_i$. The result is that the expected trace $\E[\tr(\pi_N(\gamma))]$ admits an expansion in powers of $N^{-1/|G_i|}$. However, these expansions include error terms of the form $O_s(N^{-s/|G_i|})$, which are not sufficiently strong for our purposes. In \Cref{requirement:asymptotic_expansion}, we need the implicit constant in the error to depend on $s$ at most doubly exponentially. Our first job is thus to make the analysis of both \cite{harris1968asymptotic} and \cite{muller1997finite} more quantitative by carefully tracking the dependence of the error terms on $s$. This is the technical crux of the paper.

After dealing with the local scenario when $\gamma$ belongs to a single free factor, we must handle the general case. For this, we rely on the work of Puder and Zimhoni \cite{PuderZimhoni2024}, who reformulate $\E[\tr \pi_N(\gamma)]$ in terms of covering spaces of a CW complex $X_\Gamma$ with $\pi_1(X_\Gamma)\cong \Gamma$. Homomorphisms $\Gamma \to \Sym(N)$ are in bijective correspondence with $N$-sheeted covering spaces of $X_\Gamma$ (see \cite{hatcher2002algebraic}). The trace $\tr \phi_N(\gamma)$, being the number of fixed points of $\phi_N(\gamma)$, corresponds to the number of lifts of the cellular map $Y_\gamma \to X_\Gamma$ into the covering space corresponding to $\phi_N$. In \cite{PuderZimhoni2024}, the authors develop a systematic approach to analyze these lifts via a combinatorial decomposition of the map $Y_\gamma \to X_\Gamma$ into simpler constituent maps, which are then analyzed using the local asymptotics described above. As before, we must make this analysis quantitative to ensure all dependencies are at most doubly exponential in $s$. The key observation is that the number of surjective cellular maps from the complex $Y_\gamma$ with $O(s)$ vertices and edges is at most $O(s)^{O(s)}$. By synthesizing these results, the constant $C$ in \Cref{requirement:asymptotic_expansion} depends only on the sizes of the groups $G_i$, and the integer $\mu$ is the least common multiple of their orders.

\subsubsection*{Verifying \Cref{requirement:temperedness}}

Let $x \in \C[\Gamma]$ be self-adjoint. We need to control the growth of $u_k(x^p)$ as $p \to \infty$ for each $1 \leq k \leq \mu$. The key tool here is the positivization trick as developed by Magee and de la Salle \cite{MageeSalle2024}, which allows us to reduce the problem to elements $x$ that are symmetric generators of random walks on $\Gamma$. This means that in the expansion of $x$ in the group basis, the coefficients are all nonnegative, sum to $1$, and are stable under inversion. For such $x$, the powers $x^p$ can be expressed in terms of hitting probabilities $\P(X_1 X_2 \dots X_p = \gamma)$ of the associated random walk on $\Gamma$ after $p$ steps. At this point, specific group-theoretic properties of $\Gamma$ come into play. The hitting probabilities depend strongly on whether $\gamma$ is a torsion element or not, and in the latter case, whether $\gamma$ is a proper power or contained in a subgroup isomorphic to $C_2 * C_2$. We address each scenario separately, using explicit expressions for $u_0(\gamma)$ derived from \cite{PuderZimhoni2024}. We then apply a version of Friedman's spectral trick from \cite{van2025strong,magee2025strong} to sum over all $\gamma$, making it possible to control the hitting probability of landing in a specific scenario among those described above. Ultimately, this controls the growth of $u_k(x^p)$.

\begin{remark*}
The route to \Cref{requirement:asymptotic_expansion} is in many ways specific to the symmetric group, since the fixed-point statistics of words can be analyzed very explicitly and combinatorially using the machinery of covering spaces. It would be interesting to develop analogues for other families of finite groups and their natural permutation actions, such as $\PSL_2(\F_p)$ acting on the projective line $\P^1(\F_p)$. The main challenge is that one must restrict to covering spaces whose monodromy groups are subgroups of $\PSL_2(\F_p)$. Nevertheless, computational experiments suggest that the spectral norm $\norm{\mathcal A_{\1^\perp}}$ in Schreier graphs arising from $\Gamma$ and random homomorphisms into $\PSL_2(\F_p)$ instead of the symmetric group approaches the corresponding spectral radius of $\Gamma$ for both the free group $\F_2$ and the free product $C_2 * C_3$.
\end{remark*}

\subsection{Reader's guide}

Section~\ref{sec:assumptions} shows how Requirements~\ref{requirement:asymptotic_expansion} and~\ref{requirement:temperedness} together imply the strong convergence upper bound of Theorem~\ref{thm:main}. Section~\ref{sec:asymptotics} develops quantitative asymptotics for Taylor coefficients of exponentials of polynomials. Section~\ref{sec:fractional_expansions} then turns these asymptotics into fractional expansions for normalized counts of group actions. Section~\ref{sec:expected_traces} translates those expansions, using the theory of covering spaces, into the expansion needed in \Cref{requirement:asymptotic_expansion}. Finally, Section~\ref{sec:temperedness} establishes \Cref{requirement:temperedness} by analyzing hitting probabilities of random walks on $\Gamma$, thereby completing the proof of Theorem~\ref{strong_convergence}.

\section{Assumptions imply strong convergence}
\label{sec:assumptions}

The basic principle used in the polynomial method is an inequality due to Markov (see \cite[Lemma 3.6]{van2025strong}). We will use the following more elaborate version of this inequality.

\begin{lemma}[Lemma 4.2 in \cite{MageeSalle2024}]
\label{lem:markov-brothers-interpolated}
For every real polynomial $P$ of degree at most $q$ and every positive integer $k$, we have
\[
\sup_{t \in [ 0, 1/(2q^{2}) ]}
\abs{P^{(k)}(t)}
\leq
\frac{2^{2k+1}q^{4k}}{(2k-1)!!}
\sup_{n \geq q^{2}}
\abs{P(1/n) }.
\]
\end{lemma}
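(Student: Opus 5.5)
This is Lemma 4.2 of \cite{MageeSalle2024}, and I would prove it by combining the classical Markov brothers' inequality with an interpolation bound that passes from the discrete set $\{1/n : n \ge q^2\}$ to the interval $[0,1/q^2]$.

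Set $M = \sup_{n\ge q^2} |P(1/n)|$. The first step is to show that $\sup_{t\in[0,1/q^2]}|P(t)|\le 2M$. The nodes $1/n$ for $n\ge q^2$ lie in $(0,1/q^2]$ and accumulate at $0$. The spacing between consecutive nodes is $1/n-1/(n+1)\le 1/n^2$, which is small compared with the scale $1/q^2$ of the interval, and finest near $0$.

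To make this precise, rescale by $t = s/q^2$. Then $Q(s)=P(s/q^2)$ is a polynomial of degree at most $q$ on $[0,1]$, and we know it is bounded by $M$ at the points $s_n = q^2/n \in (0,1]$. Suppose $|Q|$ attains its maximum $\|Q\|$ over $[0,1]$ at some $s^\ast$. Markov's inequality gives $|Q'|\le 2q^2\|Q\|$ on $[0,1]$. Near a point $s$, the gap between consecutive nodes is about $s^2/q^2$.

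This uniform derivative bound is too weak on its own: at $s\approx 1$ the gap is $1/q^2$, so it only gives $\|Q\|\le M+\|Q\|\cdot O(1)$, which is circular. The remedy is Bernstein's local form of Markov's inequality, $|Q'(s)|\le q\|Q\|/\sqrt{s(1-s)}$. This handles the interior of the interval. Near $s=1$ I would instead use the gap size: there the nodes are spaced by at most $1/q^2$, and Markov's bound then gives a variation of at most $2\|Q\|\cdot(\text{gap})\cdot q^2/2$. That is still order $\|Q\|$, so the constant has to be tracked carefully.

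This is the main obstacle. A cleaner alternative, which I would try first, is to prove the weaker bound $\|Q\|_{[0,1/2]}\le 2M$. This suffices because the statement only takes the supremum over $t\in[0,1/(2q^2)]$, that is, $s\in[0,1/2]$. On $[0,3/4]$ the node gaps are at most $(3/4)^2/q^2\cdot(1+o(1))$, and there is room to use Bernstein's inequality there.

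The second step is to apply the iterated Markov inequality on the shorter interval. For a polynomial of degree $q$ on an interval of length $L$, the $k$-th derivative satisfies
\[
\|P^{(k)}\|\le \frac{q^2(q^2-1)\cdots(q^2-(k-1)^2)}{(2k-1)!!}\left(\frac{2}{L}\right)^k\|P\|.
\]
With $L=1/q^2$ this gives a bound of $q^{2k}(2q^2)^k/(2k-1)!!$ times $\|P\|_{[0,1/q^2]}$. Bounding the numerator by $q^{2k}$ and combining with $\|P\|\le 2M$ from the first step yields $2^{k+1}q^{4k}M/(2k-1)!!$. This is within the claimed bound, leaving an extra factor of $2^k$ to absorb any constant lost in the first step, for instance if the discretization argument gives $2^{k}$-type losses, or if the norm is only controlled on $[0,1/(2q^2)]$ and the Markov interval length is halved. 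So the heart of the proof is the discretization estimate in the first step, and the target constant leaves enough slack for it.
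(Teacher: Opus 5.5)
\textbf{There is a genuine gap, in your first step.} That step is the only nontrivial part of the lemma, and you never actually carry it out.

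\emph{Why your first step fails.} The bound on the full interval $[0,1/q^2]$ is circular, as you observe. The proposed repair does not close either. Bernstein's (or Markov's) inequality for $Q$ on $[0,1]$ bounds $|Q'(s)|$ by $\|Q\|_{[0,1]}$, so restricting to $s\le 3/4$ only gives $\|Q\|_{[0,1/2]}\le M+O(1/q)\,\|Q\|_{[0,1]}$. Here $\|Q\|_{[0,1]}$ is exactly the quantity you could not control. It also cannot be traded for $\|Q\|_{[0,1/2]}$, because the two are not comparable uniformly in $q$: for $Q(s)=T_q(4s-1)$ the ratio is $T_q(3)$, which is exponential in $q$. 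Your second step feeds $\|P\|_{[0,1/q^2]}\le 2M$ into Markov with $L=1/q^2$, so it has nothing to stand on.

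\emph{The missing idea.} Run the self-improving argument on the target interval $I=[0,1/(2q^2)]$ itself. Take the supremum and the derivative bound on the same interval, and use only the nodes $1/n$ with $n\ge 2q^2$.

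These nodes lie in $I$, and the endpoint $1/(2q^2)$ is one of them. Consecutive gaps are $\frac{1}{n(n+1)}\le\frac{1}{4q^4}$, so every $t\in I$ is within $\frac{1}{8q^4}$ of a node. Markov's inequality on an interval of length $\frac{1}{2q^2}$ gives $\|P'\|_I\le 4q^4\|P\|_I$. Hence
\[
\|P\|_I\le \sup_{n\ge 2q^2}|P(1/n)|+\tfrac12\|P\|_I,
\]
that is, $\|P\|_I\le 2M$, and no Bernstein inequality is needed.

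On $[0,a/q^2]$ the same computation gives a variation of about $a\|P\|$. This is precisely why $a=1$ was borderline for you, while $a=1/2$ yields the factor $2$. This is the standard discretization step of the polynomial method \cite{chen2024new} that underlies the cited lemma.

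\emph{The constant.} V.~A.~Markov's inequality on $I$, with $(2/L)^k=(4q^2)^k$ and numerator at most $q^{2k}$, then yields exactly $\frac{2^{2k+1}q^{4k}}{(2k-1)!!}M$. Halving the interval uses up the entire factor $2^k$ you regarded as slack. So the first step has to deliver the constant $2$ on $I$ itself; any larger constant there would exceed the stated bound for every $k$.
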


Our next goal is extending the approximation of $\E[\ntr \pi_N(\gamma)]$ as given in \Cref{requirement:asymptotic_expansion} to an analogous approximation for $\E[\ntr h(\pi_N(x))]$ for $x \in \C[\Gamma]$. In the polynomial method, this is called the \emph{master inequality} \cite[Corollary 3.9]{van2025strong}. Our proof here mimics that of \cite[Lemma 3.1]{magee2025strong}. The points of divergence are the polynomial $f_h$ in \cref{eq:f_h}, its evaluation at $N^{-1/\mu}$ in \cref{eq:f_h_bound}, and the fact that we need to apply Lemma \ref{lem:markov-brothers-interpolated} to a considerably higher order of derivative in \cref{eq:derivativeBound}.

Once and for all, fix a symmetric generating set $S$ of $\Gamma$. For $x \in \C[\Gamma]$, let $\abs{x}$ be the maximum word length of a group element in the support of $x$, and let $\norm{x}_{C_{\mathrm{red}}^\ast(\Gamma)}$ be the supremum of $\norm{\pi(x)}$ over all unitary representations $\pi$ of $\Gamma$. For a polynomial $h$, we denote $\norm{h}_I = \sup_{t \in I} |h(t)|$.

\begin{proposition}[master inequality]
\label{master_inequality} 
Suppose \Cref{requirement:asymptotic_expansion} holds with constant $C$. Let $x \in \C[\Gamma]$ be self-adjoint with $K = \norm{x}_{C_{\mathrm{red}}^\ast(\Gamma)}$. Then for every positive integer $N$, every real polynomial $h$ of degree at most $q$, and every positive integer $k \leq \mu$, we have
\[ 
\abs{u_{k}(h(x))} \leq
O_{\mu, |S|, C, |x|} \left( q^{O_{\mu, C}(1)} \norm{h}_{[-K,K]} \right),
\]
and
\[ 
\abs{ 
    \E \left[ 
    {\ntr h(\pi_{N}(x))} \right] 
    - 
    \sum_{k=0}^{\mu} \frac{u_k(h(x))}{N^{k/\mu}} 
} 
\leq
O_{\mu, |S|, C, |x|} \left( \frac{q^{O_{\mu, C}(1)} \norm{h}_{[-K,K]}}{N^{1 + 1/\mu}} \right).
\] 
\end{proposition}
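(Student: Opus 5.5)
Following \cite[Lemma 3.1]{magee2025strong}, the plan is to encode the expected traces as values of a polynomial in the variable $t=N^{-1/\mu}$ and then apply \Cref{lem:markov-brothers-interpolated}.

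\textbf{Step 1: the polynomial $f_h$.} Write $h(x)=\sum_{|\gamma|\le q|x|} c_\gamma \gamma$. Since $\ntr\pi_N$ is linear, $\E[\ntr h(\pi_N(x))]=\sum_\gamma c_\gamma \E[\ntr\pi_N(\gamma)]$. Fix a precision $s$, to be chosen below as a multiple of $q|x|$ (roughly $s=(\mu+1)q|x|$), so that $s\ge|\gamma|$ for every $\gamma$ in the support. Define
\begin{equation*}
f_h(t)=\sum_\gamma c_\gamma\sum_{k=0}^{s-1}u_k(\gamma)\,t^{k}.
\end{equation*}
This is a real polynomial in $t$ of degree less than $s=O(q)$, and its $k$-th Taylor coefficient at $0$ is $u_k(h(x))$. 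By \Cref{requirement:asymptotic_expansion}, for $N\ge Cs^C$ we have
\begin{equation*}
\bigl|\E[\ntr h(\pi_N(x))]-f_h(N^{-1/\mu})\bigr|\le \sum_\gamma |c_\gamma|\,(Cs)^{Cs}N^{-s/\mu}.
\end{equation*}

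\textbf{Step 2: bounding $f_h$ on the grid.} Two estimates are needed.
\begin{itemize}
\item The sum $\sum|c_\gamma|$ is at most $\|h\|$ evaluated on coefficients, giving a bound of the form $\|h\|_{[-K,K]}\,e^{O(q)}$. The naive route expands $h$ in the Chebyshev basis with $|x|_1$, which loses $(\|x\|_{\ell^1}/K)^q$; this loss must be absorbed by the error term.
\item $|\E[\ntr h(\pi_N(x))]|\le\|h\|_{[-K,K]}$, since $\|\pi_N(x)\|\le K$.
\end{itemize}
Hence, for $N^{1/\mu}$ large compared with $s^{O(1)}$ (say $N\ge (Cs)^{C'\mu}$), the error $(Cs)^{Cs}e^{O(q)}N^{-s/\mu}$ is $\le\|h\|_{[-K,K]}$ provided $s$ is a large enough multiple of $q$. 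So $|f_h(1/n)|\lesssim\|h\|_{[-K,K]}$ for all $n\ge Q^2$ with $Q=q^{O_{\mu,C}(1)}$, where $t=1/n$ and $n$ ranges over $N^{1/\mu}$. The interpolated lemma, however, wants $\sup_{n\ge Q^2}|P(1/n)|$ over all integers $n$, whereas $N^{1/\mu}$ only hits $\mu$-th roots. I would handle this by applying the lemma to $P(t)=f_h(t^{\mu})$ in place of $f_h$, so that $1/n$ with $n=N$ corresponds to $t^\mu=1/N$, i.e.\ $t=N^{-1/\mu}$. I expect this bookkeeping to be the main obstacle: the reparametrization raises the degree of $P$ to about $\mu s$ and forces derivatives of order up to $\mu(\mu+1)$, which is the ``considerably higher order'' the authors mention. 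The lemma is then applied with degree $q'=\mu s=O(q)$, taking $\sup_{N\ge q'^2}$, and the small $N$ are covered by the first bullet.

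\textbf{Step 3: coefficient bounds.} \Cref{lem:markov-brothers-interpolated} gives $\sup_{t\in[0,1/(2q'^2)]}|P^{(j)}(t)|\le q'^{O(j)}\|h\|_{[-K,K]}$ for $j\le\mu(\mu+1)$. Evaluating at $t=0$ gives $u_k(h(x))=P^{(\mu k)}(0)/(\mu k)!$, which proves the first claim. For the second claim, Taylor's theorem with remainder of order $\mu+1$ in the variable $u=t^\mu$, equivalently order $\mu(\mu+1)$ in $t$, gives
\begin{equation*}
\Bigl|f_h(N^{-1/\mu})-\sum_{k\le\mu}u_k(h(x))N^{-k/\mu}\Bigr|\le q^{O(1)}\|h\|_{[-K,K]}\,N^{-(\mu+1)/\mu}
\end{equation*}
for $N\ge q^{O(1)}$. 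Adding the Step 1 error completes this range. For $N<q^{O(1)}$ the bound is trivial: the left side is at most $\|h\|+\sum_k|u_k|N^{-k/\mu}\le q^{O(1)}\|h\|_{[-K,K]}$, and $N^{1+1/\mu}\le q^{O(1)}$, so enlarging the exponent absorbs it.
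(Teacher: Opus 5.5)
Your skeleton is the paper's: the polynomial $f_h$ in $t=N^{-1/\mu}$, a bound $|f_h|\lesssim\|h\|_{[-K,K]}$ in the good range, \Cref{lem:markov-brothers-interpolated}, a Taylor remainder, and a trivial small-$N$ regime. However, the way you invoke the Markov-type lemma has a genuine gap. Repairing it needs one extra idea.

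\textbf{The gap.} The lemma requires $|P(1/n)|\lesssim\|h\|_{[-K,K]}$ for \emph{every} integer $n\ge q'^2$, where $q'$ is the degree parameter. You take $q'=\mu s$, the true degree. But the bound on $f_h$ is only available once $N^{1/\mu}$ exceeds a threshold of order $(Cs)^{C}$, times an $|S|,|x|$-dependent factor. Below it, \Cref{requirement:asymptotic_expansion} either does not apply or its error $(Cs)^{Cs}e^{O(q)}N^{-s/\mu}$ is not absorbed.

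Since $C$ is arbitrary, this threshold in general exceeds $(\mu s)^2$. With your substitution, $P(1/n)$ corresponds to $N^{1/\mu}=n^\mu$, so the relevant comparison is with $(\mu s)^{2\mu}$, and that is still too small once $C>2\mu$.

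The intermediate range is not ``covered by the first bullet''. That bullet only bounds $\sum_\gamma|c_\gamma|$. Bounding $f_h(t)=\sum_k u_k(h(x))t^k$ directly there needs bounds on the $u_k(\gamma)$, and \Cref{requirement:asymptotic_expansion} only supplies these in the form $(Ck)^{O(Ck)}$. With $k$ up to $s\asymp q$, that is a loss of $q^{O(q)}$, which is exactly what the $q^{O_{\mu,C}(1)}$ conclusion forbids.

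\textbf{The fix (what the paper does).} The lemma holds for any $q'\ge\deg P$, so inflate the degree parameter. Take $Q=(3|S|)^{1/4}(3\mu Cq|x|)^{3\mu C/2}=q^{O_{\mu,C}(1)}$, chosen so that $Q^2$ is the threshold for $N^{1/\mu}$. The cost $Q^{4k}$ in the derivative bounds is still polynomial in $q$. You actually wrote down such a $Q$, and then replaced it by $\mu s$.

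\textbf{The ``$\mu$-th roots'' obstacle is not real.} For an integer $n$, just take $N=n^\mu$. Then $f_h(1/n)=f_h(N^{-1/\mu})$ is controlled for all $n\ge Q^2$, and the lemma applies to $f_h$ itself. This gives $u_k(h(x))=f_h^{(k)}(0)/k!$, and the Taylor step needs only $f_h^{(\mu+1)}$. The paper's ``considerably higher order'' means $\mu+1$, compared with the second derivative in \cite{magee2025strong}, not $\mu(\mu+1)$.

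Your justification of the substitution is also wrong as written. $P(t)=f_h(t^\mu)$ gives $P(1/N)=f_h(N^{-\mu})$, not $f_h(N^{-1/\mu})$. It survives only because $N^{-\mu}=(N^{\mu^2})^{-1/\mu}$, which is the same observation that makes the substitution unnecessary.

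\textbf{The $\ell^1$ bound.} Your Chebyshev route to $\sum_\gamma|c_\gamma|\le e^{O(q)}\|h\|_{[-K,K]}$ keeps constants depending only on $|S|,|x|$, because $\|x\|_{\ell^1}\le(3|S|)^{|x|/2}\|\lambda(x)\delta_e\|\le(3|S|)^{|x|/2}K$. You should state this explicitly. The paper gets the same bound in one step by Cauchy--Schwarz: $\sum_\gamma|c_\gamma|\le(3|S|)^{q|x|/2}\|h(\lambda(x))\delta_e\|$.
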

\begin{proof}
In case $x = 0$ or $h$ is of degree $0$, we have $u_k(h(x)) = 0$ and both inequalities hold trivially. Assume therefore that $\abs{x} \geq 1$ and $q \geq 1$. Write
\[ 
h(x) = 
\sum_{\gamma \in \Gamma}
\alpha_{\gamma} \gamma 
\quad (\alpha_{\gamma} \in \C).
\]
The maximum length of a monomial in $h(x)$ does not exceed $q \abs{x}$. Thus the number of nonzero coefficients $\alpha_\gamma$ is at most $(2\abs{S}+1)^{q|x|} \le (3\abs{S})^{q|x|}$. Apply \Cref{requirement:asymptotic_expansion} separately to each element of the support of $h(x)$ with order of approximation $q \abs{x} + \mu + 1 \leq 3 \mu q \abs{x}$. As long as $N \geq C (3 \mu q \abs{x})^C$, we thus obtain
\begin{equation}
    \label{eq:Etrace_first_estimate}
\abs{
    \E \left[ {\ntr h(\pi_{N}(x))} \right] 
    - 
    f_h ( N^{-1/\mu} )
}
\leq
\frac{(3\mu Cq|x|)^{3\mu Cq|x|}}{N^{(q|x|+\mu+1)/\mu}}
\sum_{\gamma\in\Gamma} \abs{\alpha_{\gamma}},
\end{equation}
where we have denoted
\begin{equation}
    \label{eq:f_h}
    f_{h}(t) = 
    \sum_{k = 0}^{q|x|+\mu} u_k(h(x)) t^{k} =
    \sum_{\gamma\in\Gamma}\sum_{k=0}^{q|x|+\mu} \alpha_\gamma u_{k}(\gamma) t^{k}.
\end{equation}
Note that since $h(x)$ is self-adjoint and $u_k(\gamma) = u_k(\gamma^{-1})$, the coefficients $u_{k}(h(x))$ are real numbers. Thus $f_{h}$ is a real polynomial. The sum of the coefficients $\alpha_\gamma$ in \cref{eq:Etrace_first_estimate} runs over a set of at most $(3\abs{S})^{q|x|}$ elements. By Cauchy-Schwarz we thus have
\[
        (3\abs{S})^{-q|x|/2} \sum_{\gamma \in \Gamma} \abs{\alpha_{\gamma}}
        \leq
        \norm{h(x)}_{\ell^2(\Gamma)}
        =
        \norm{h(\lambda(x)) \delta_e}_{\ell^2(\Gamma)}
        \leq
        \norm{h}_{[-K,K]},
\]
where we have used functional calculus and the assumption $\norm{x}_{C_{\mathrm{red}}^\ast(\Gamma)} \leq K$ in the last inequality. It now follows that, as long as $N \geq (3\abs{S})^{1/2}(3 \mu C q \abs{x})^{3 \mu C}$, we have
\begin{equation}
\label{eq:exp}
\abs{
    \E \left[ {\ntr h(\pi_{N}(x))} \right] 
    - 
    f_h ( N^{-1/\mu} )
}
\leq
\frac{\norm{h}_{[-K,K]}}{N^{1 + 1/\mu}}.
\end{equation}
Since $\norm{\pi_N(x)} \leq \norm{x}_{C_{\mathrm{red}}^\ast(\Gamma)} = K$, the expectation $\E \left[ \ntr h(\pi_N(x)) \right]$ is at most $K/N$ in absolute value. Thus by functional calculus and \cref{eq:exp}, we obtain 
\begin{equation}
    \label{eq:f_h_bound}
\abs{f_{h} (N^{-1/\mu})}
\leq
2 \norm{h}_{[-K,K]}.
\end{equation}
Now, we are in a position to estimate the derivatives of $f_h$. Apply Lemma \ref{lem:markov-brothers-interpolated} to the real polynomial $f_h$ of degree at most $(3\abs{S})^{1/4}(3\mu Cq \abs{x})^{3 \mu C/2}$ to estimate
\begin{equation}
    \label{eq:derivativeBound}
    \norm{f_{h}^{(k)}}_{\left[ 0, 1/(2 (3r)^{1/2} (3\mu Cq |x|)^{3 \mu C}) \right]}
    \leq
    \frac{4^{k+1} \cdot (3\abs{S})^k (3\mu Cq |x|)^{6 \mu C k}}{(2k-1)!!} \norm{h}_{[-K,K]}.
\end{equation}
For every integer $1 \leq k \leq \mu$, we have $u_k(h(x)) = f_h^{(k)}(0)/k!$. The first part of the statement then follows immediately from \cref{eq:derivativeBound}.

Using the same bounds, we can also deduce the second part of the statement. First of all, in the regime when $N \geq (2 (3\abs{S})^{1/2} (3 \mu C q \abs{x})^{3 \mu C})^\mu$, Taylor expanding $f_h$ at $0$ up to degree $\mu$ gives
\[
\begin{split}
    \abs{
        f_{h} (N^{-1/\mu}) - \sum_{k=0}^{\mu} \frac{u_k(h(x))}{N^{k/\mu}}
    } 
    & \leq 
    \frac{1}{(\mu + 1)! N^{1 + \frac{1}{\mu}}} \norm{ f_{h}^{(\mu+1)} }_{[ 0,1/N^{-1/\mu} ]} \\
    & \leq
    O_{\mu, |S|, C, |x|} \left( \frac{q^{O_{\mu, C}(1)} \norm{h}_{[-K,K]}}{N^{1 + 1/\mu}} \right).
\end{split}
\]
The second part of the statement in this regime thus follows from \cref{eq:exp} by triangle inequality.
On the other hand, in the regime when $N < (2 (3\abs{S})^{1/2} (3 \mu C q \abs{x})^{3 \mu C})^\mu = O_{\mu, |S|, C,|x|}(q^{O_{\mu, C}(1)})$, we can directly estimate, using $\abs{u_0(h(x))} = \abs{\tau(h(\lambda(x)))} \leq \norm{h}_{[-K,K]}$, 
\[
\begin{split}
\abs{\E \left[{\ntr h(\pi_{n}(x))} \right] - \sum_{k=0}^{\mu} \frac{u_k(h(x))}{N^{k/\mu}}}
& \leq 
2 \norm{h}_{[-K,K]} 
+ 
\sum_{k=1}^{\mu} \frac{\abs{u_k(h(x))}}{N^{k/\mu}} \\
& \leq 
O_{\mu, |S|, C, |x|} \left( \frac{q^{O_{\mu, C}(1)} \norm{h}_{[-K,K]}}{N^{1 + 1/\mu}} \right).
\end{split}
\]
This establishes the second part of the statement in this regime as well. The proof is complete.
\end{proof}

From here on, we follow the usual steps of the polynomial method. Let us first extend Proposition \ref{master_inequality} to all smooth functions $h$.

\begin{proposition}
\label{smooth_master}
Suppose \Cref{requirement:asymptotic_expansion} holds with constant $C$. Let $x \in \C[\Gamma]$ be self-adjoint with $K = \norm{x}_{C^{*}(\Gamma)}$. 
\begin{enumerate}
    \item For all positive integers $k \leq \mu$, the functional $\nu_k \colon h \mapsto u_k(h(x))$ extends from a polynomial domain to a compactly supported distribution on $C^\infty(\R)$.
    \item For all positive integers $N$ and all $h \in C^\infty(\R)$, we have
\[ 
\abs{
    \E \left[ {\ntr h(\pi_{N}(x))} \right] 
    - \tau (h(\lambda(x))) 
    - \sum_{k=1}^\mu \frac{\nu_k(h)}{N^{k/\mu}} 
}
\leq
O_{\mu,|S|,C,|x|} \left( \frac{\norm{f^{(\omega)}}_{[0,2\pi]} }{N^{1 + \frac{1}{\mu}}} \right), 
\]
where $\omega$ depends only on $\mu$ and $C$, and $f(\theta)=h(K\cos\theta)$. 
\end{enumerate}
\end{proposition}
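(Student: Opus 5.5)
This follows the standard Chebyshev-expansion argument of the polynomial method, as in \cite[Lemma 3.2]{magee2025strong} and \cite[Section 3]{van2025strong}, with Proposition~\ref{master_inequality} as the only input. The idea is to expand a smooth $h$ on $[-K,K]$ in Chebyshev polynomials, apply the master inequality to each Chebyshev polynomial separately, and sum using the rapid decay of the Chebyshev coefficients.

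Concretely, let $T_j$ be the Chebyshev polynomials and write $h(K\cos\theta)=f(\theta)=\sum_{j\ge0} a_j\cos(j\theta)$. Then on $[-K,K]$ we have $h(t)=\sum_j a_j T_j(t/K)$. Set $h_j(t)=T_j(t/K)$. This is a real polynomial of degree $j$ with $\norm{h_j}_{[-K,K]}=1$.

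The first part of Proposition~\ref{master_inequality} gives, for $1\le k\le\mu$,
\[
|u_k(h_j(x))|\le A\, j^{B}
\]
for $j\ge1$, with $A=O_{\mu,|S|,C,|x|}(1)$ and $B=O_{\mu,C}(1)$. The case $j=0$ is trivial, since $u_k(1)=0$ for $k\ge1$.

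\emph{Part 1.} Define
\[
\nu_k(h)=\sum_j a_j\,u_k(h_j(x)).
\]
Integrating by parts $B+2$ times in $a_j=\frac{1}{\pi}\int_0^{2\pi} f(\theta)\cos(j\theta)\,d\theta$ gives
\[
|a_j|\le 2\,j^{-(B+2)}\norm{f^{(B+2)}}_{[0,2\pi]}
\]
for $j\ge1$. Hence the series defining $\nu_k$ converges absolutely, and
\[
|\nu_k(h)|\lesssim \norm{f^{(B+2)}}_{[0,2\pi]}\lesssim \sum_{i\le B+2}\norm{h^{(i)}}_{[-K,K]}
\]
by the chain rule applied to $f(\theta)=h(K\cos\theta)$.

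This functional is linear, and it agrees with $u_k(h(x))$ when $h$ is a polynomial, because then the expansion is finite. It depends only on $h|_{[-K,K]}$, and it is continuous in the $C^{B+2}$ topology. Therefore $\nu_k$ is a distribution of finite order supported in $[-K,K]$, and in particular compactly supported.

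\emph{Part 2.} Take $\omega=B+2$. For a polynomial $h$, the second part of Proposition~\ref{master_inequality}, together with $u_0(h(x))=\tau(h(\lambda(x)))$, already gives the bound with error $A\,q^{B}\norm{h}_{[-K,K]}N^{-1-1/\mu}$. For smooth $h$, apply this to each $h_j$ and sum against $a_j$. The partial sums $\sum_{j\le J}a_jh_j$ converge to $h$ uniformly on $[-K,K]$. Since $\norm{\pi_N(x)}\le K$ and $\norm{\lambda(x)}\le K$, functional calculus gives convergence of both $\E[\ntr h(\pi_N(x))]$ and $\tau(h(\lambda(x)))$ along the partial sums. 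The functionals $\nu_k$ converge along the partial sums by the construction in Part 1.

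The error terms therefore add up to
\[
\sum_j |a_j|\,A\,j^{B}\,N^{-1-1/\mu}\lesssim \norm{f^{(B+2)}}_{[0,2\pi]}\sum_j j^{-2}\,N^{-1-1/\mu},
\]
which is the claimed bound.

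The main care point is choosing $\omega$ large enough to absorb the polynomial loss $q^{O_{\mu,C}(1)}$ from Proposition~\ref{master_inequality}. Since both this loss and the exponent $B$ depend only on $\mu$ and $C$, so does $\omega$. The remaining bookkeeping is routine; for instance, the normalization of $\ntr$ does not affect any of the limits above.
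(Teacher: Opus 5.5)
Your proposal is correct and follows the paper's proof. It expands $h$ in Chebyshev polynomials, applies Proposition~\ref{master_inequality} to each $T_j(\cdot/K)$, and sums using the decay of the coefficients. The only differences are that you obtain the coefficient decay by integration by parts and build $\nu_k$ directly, where the paper cites \cite[Lemma 2.3]{magee2025strong} and \cite[Lemma 4.7]{chen2024new}.

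One small slip: $u_\mu(1)=-1$, not $0$, because $\E[\ntr\pi_N(1)]=1-1/N$. This is harmless. The $j=0$ term then contributes exactly zero error in Part~2, and in Part~1 it contributes only a term bounded by $\norm{h}_{[-K,K]}$, which your $C^{B+2}$ bound already covers.
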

\begin{proof}
The first portion of the statement about extending $\nu_k$ to a compactly supported distribution follows at once from the first part of Proposition \ref{master_inequality} and \cite[Lemma 4.7]{chen2024new}. For the second one, we assume first that $h$ is a polynomial of degree $q$. Expand $h$ in the basis of Chebyshev polynomials $T_\ell(\cos \theta ) = \cos(\ell \theta)$ as
\[
h(t) = \sum_{\ell=0}^{q} a_{\ell} T_{\ell}(t/K).
\]
Applying the second part of Proposition \ref{master_inequality} separately to each
Chebyshev polynomial yields 
\[ 
\abs{ 
    \E \left[ {\ntr h(\pi_{N}(x))} \right] 
    - \tau ( h(\lambda(x)) )
    - \sum_{k=1}^\mu \frac{u_k(h(x))}{N^{k/\mu}} 
}
\leq
\frac{O_{\mu,|S|,C,|x|}(1)}{N^{1 + 1/\mu}} 
\sum_{\ell = 1}^{q} \ell^{O_{\mu,C}(1)} |a_{\ell}|. 
\]
The second part of the statement for $h$ a polynomial now follows immediately from \cite[Lemma 2.3]{magee2025strong}. Note that the resulting bound does not depend
on the degree of $h$, hence, by density of polynomials in $C^{\infty}(\R)$,
the statement extends by continuity to every smooth $h$.
\end{proof}

We are now ready to conclude the proof of Theorem \ref{thm:main}.

\begin{proof}[Proof of Theorem \ref{thm:main}]
Suppose first that $x \in \C[\Gamma]$ is self-adjoint with $K = \norm{x}_{C^{*}(\Gamma)}$. Let $\omega$ and $\nu_k$ be as in Proposition \ref{smooth_master}. By \cite[Lemma 4.10]{chen2024new}, for any $\epsilon > 0$, there exists a smooth function $h \in C^{\infty}(\R)$ with the following properties:
\begin{itemize}
\item $h(z) \in [0,1]$ for all $z$,
\item $h(0) = 0$ for $|z| \leq \norm{\lambda(x)} + \epsilon/2$ and $h(z)=1$ for $|z| \geq \norm{\lambda(x)} + \epsilon$,
\item $\|f^{(\omega)}\|_{[0,2\pi]} \leq O_{\mu, C, K}(\epsilon^{-O_{\mu,C}(1)})$ for $f(\theta)=h(K\cos\theta)$.
\end{itemize}
Thus $\tau(h(\lambda(x))) = 0$. Moreover, by \Cref{requirement:temperedness} and \cite[Lemma 4.9]{chen2024new}, we have $\supp (\nu_k) \subseteq [- \norm{\lambda(x)}, \norm{\lambda(x)}]$, so that $\nu_k(h) = 0$ for all $1 \leq k \leq \mu$. Now,
\[ 
\P \left[ \norm{\pi_{N}(x)} \geq \norm{\lambda(x)} + \epsilon \right]
\leq
\P \left[ \tr h(\pi_{N}(x)) \geq 1 \right]
\leq
\E \left[ {\tr h(\pi_{N}(x))} \right],
\]
and the latter can be bounded from Proposition \ref{smooth_master} by
\[
    O_{\mu,|S|,C,|x|,K} \left( \frac{1}{\epsilon^{O_{\mu,C}(1)} N^{1/\mu}} \right).
\]
If we take $\epsilon = N^{-a}$ with $a$ small enough so that $1/a \gg_{\mu, C} 1$, the latter bound is $O_{\mu,|S|,C,|x|,K}(N^{-1/(2\mu)})$. This completes the proof in case when $x$ is self-adjoint. The general case then follows from the fact that for any $y \in \C[\Gamma]$, we have $\norm{y}^2 = \norm{y^\ast y}$, and running the argument above with $x = y^\ast y$.
\end{proof}

\begin{remark}
    \label{remark:rate_of_convergence}
It follows from the proof above that we can also control the rate of convergence in Theorem \ref{thm:main}. Specifically, if we choose $a$ sufficiently small in terms of $\mu$ and $C$, the proof shows that
\[
    \P \left[ \norm{\pi_{N}(x)} \geq \norm{\lambda(x)} + N^{-a/2} \right] \leq O_{\mu,|S|,C,|x|,K}(N^{-1/(2\mu)}).
\]
\end{remark}

\section{Asymptotics for exponentials of polynomials}
\label{sec:asymptotics}

Let $P(z) = \sum_{i = 1}^q c_i z^i$ be a polynomial of degree $q \geq 2$ with real coefficients $c_i$ satisfying
\begin{equation} \label{eq:polynomial_coefficient_conditions}
    c_i \geq 0 \quad \text{for } 1 \leq i \leq q,
    \qquad c_1 = 1,
    \qquad c_q = 1/q,
    \qquad c_i = 0 \quad \text{for } i \nmid q.
\end{equation}
Consider the entire function $\exp(P(z))$, and let its Taylor expansion at $0$ be
\[
    \exp(P(z)) =
    \sum_{n = 0}^\infty \alpha_n z^n.
\]
Müller \cite{muller1997finite} provides explicit formulas, in terms of the coefficients of $P(z)$, for the asymptotic expansion of the Taylor coefficients $\alpha_n$ as $n \to \infty$. We recall these formulas here, and make explicit all dependencies that we require for our application.

\subsection{Quantitative asymptotics in terms of $r_n$}

Müller's formulas build on more general results of Harris and Schoenfeld \cite{harris1968asymptotic} for generating functions
\[
    f(z)=\sum_{n=0}^\infty \alpha_n z^n
\]
that are analytic in a neighborhood of the origin and satisfy the conditions (A)--(E) listed below.
    
\begin{enumerate}[(A)]
    \item\label{cond:A} The function $f(z)$ is analytic for $\abs{z} < R$ for some $R \in (0, \infty]$ and is real for real $z$.
    \item There exists $R_0 \in (0,R)$ and a real-valued function $d(r)$ defined on $(R_0, R)$ such that, for any $r \in (R_0, R)$, $0 < d(r) < 1$ and $r(1 + d(r)) < R$. Moreover, $f(z) \neq 0$ for each $z$ such that $\abs{z-r} \leq r d(r)$ and every $r \in (R_0, R)$.
    \item For $k \geq 1$, define
    \[ A(z) = f'(z)/f(z) , \quad B_k(z) = z^kA^{(k-1)}(z)/k! , \quad B(z) = z B_1'(z)/2 .\]
    Then $B(r) > 0$ for $R_0 < r < R$ and $B_1(r) \to \infty$ as $r \to R^-$.
    \item Let $R_1 \in (R_0, R)$ be such that $B_1(R_1) > 0$. For any integer $n \geq B_1(R_1)$, let $r_n$ be the unique solution of the equation $B_1(r) = n+1$ with $r \in (R_1, R)$. For $j \geq 1$, define
    \[C_j(r) = -(B_{j+2}(r) + (-1)^{j}B_1(r)/(j+2))/B(r) .\]
    Suppose that there exist non-negative $D_n, E_n$ and $n_1 > 0$ such that the inequality $\abs{C_j(r_n)} \leq E_n D_n^j$ holds for all $n \geq n_1$ and $j \geq 1$.
    \item\label{cond:E} As $n \to \infty$, we have
    \[ B(r_n) d(r_n)^2 \to \infty , \quad D_n E_n B(r_n) d(r_n)^3 \to 0 , \quad D_n d(r_n) \to 0 .\]
\end{enumerate}

Harris and Schoenfeld show that under these conditions, the coefficients $\alpha_n$ have an asymptotic expansion as $n \to \infty$ of the following form.

\begin{theorem}[\cite{harris1968asymptotic}, Theorem 1]

    Suppose that $f(z)$ satisfies conditions \textup{(A)--(E)}. Then for all $N \geq 0$, there exist constants $K_N, L_N > 0$ such that for all $n \geq K_N$, we have
    \[
        \abs{
            \frac{\alpha_n}{f(r_n) / (2 r_n^n \sqrt{\pi B(r_n)})} 
            - 1 - \sum_{k = 1}^N \frac{F_k(n)}{B(r_n)^k}
        } \leq
        L_N \cdot \varphi_N(n),
    \]
    where
\begin{itemize}
    \item $F_k(n)$ are explicitly computable in terms of $C_j(r_n)$ for $1 \leq j \leq 2k$,
    \item $\varphi_N(n)$ is explicitly computable as a function of $D_n, E_n, B(r_n), d(r_n)$, and the maximum value of\, $\abs{f(z)}$ for $z$ on some curve depending on $r_n, d(r_n)$.
\end{itemize}
\end{theorem}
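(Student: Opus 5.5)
The plan is the saddle-point method. Start from Cauchy's formula on the circle $|z| = r_n$,
\[
\alpha_n = \frac{1}{2\pi i}\oint_{|z|=r_n} \frac{f(z)}{z^{n+1}}\,dz = \frac{f(r_n)}{2\pi r_n^n}\int_{-\pi}^{\pi} \frac{f(r_n e^{i\theta})}{f(r_n)}\,e^{-in\theta}\,d\theta.
\]
Split the $\theta$-integral into a central arc $|\theta|\le \delta_n$ and its complement. The arc is chosen so that $r_n e^{i\theta}$ stays inside the disk $|z-r_n|\le r_n d(r_n)$ from condition (B), where $f$ has no zeros; a natural choice is $\delta_n \asymp d(r_n)$. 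On this disk $\log f$ is analytic.

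On the central arc, expand $\log f(r_n e^{i\theta})$ in powers of $w=e^{i\theta}-1$. Using $A=f'/f$, the Taylor coefficients are exactly the $B_k(r_n)$:
\[
\log f(r_n(1+w))-\log f(r_n)=\sum_{k\ge1}B_k(r_n)\,w^k .
\]
Rewrite $-in\theta$ as $-(n+1)\log(1+w)+\log(1+w)$. The choice $B_1(r_n)=n+1$ cancels the linear term, so the exponent becomes
\[
-B(r_n)\theta^2+\sum_{j\ge1}(\text{terms built from } C_j(r_n))\,(i\theta)^{j+2}.
\]
After normalising by $B$, the combinations $B_{j+2}+(-1)^jB_1/(j+2)$ are exactly $-B\,C_j$. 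Next, substitute $\theta=t/\sqrt{B(r_n)}$ and expand the exponential of the cubic-and-higher part as a power series in $B^{-1/2}$. Integrate term by term against the Gaussian $e^{-t^2}$. Odd powers vanish, and the even powers produce $F_k(n)/B(r_n)^k$, with $F_k$ a polynomial in $C_1,\dots,C_{2k}$. This yields the main term $f(r_n)/(2r_n^n\sqrt{\pi B(r_n)})$ together with the series.

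The truncation error after $N$ terms is controlled by three pieces.
\begin{itemize}
\item The coefficient bound $|C_j|\le E_nD_n^j$ from (D) makes the higher-order series converge geometrically on $|\theta|\le\delta_n$. This needs $D_n\delta_n\to0$, which is supplied by $D_nd(r_n)\to0$.
\item The remainder of the exponential expansion is bounded by a Cauchy-type estimate in powers of $D_nE_nB\delta_n^3$, which tends to $0$ by (E).
\item Extending the Gaussian integrals from $|t|\le\delta_n\sqrt{B}$ to all of $\R$ costs $e^{-B\delta_n^2}$. This is small because $B\,d^2\to\infty$.
\end{itemize}

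The tail $|\theta|>\delta_n$ is bounded trivially by the maximum of $|f|$ on the remaining arc, or on a deformed curve near it, divided by $f(r_n)$. These contributions together make up $\varphi_N(n)$.

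The main obstacle is the central-arc bookkeeping. One must expand to a fixed order $N$ while keeping uniform control of the tail of the series in the $C_j$, and the correct way to combine the three smallness conditions of (E) is delicate. I would handle this by writing the non-Gaussian factor as $\exp(g)$ with $|g|\ll D_nE_nB|\theta|^3$. Then $|\exp(g)-\sum_{m<M}g^m/m!|\le |g|^Me^{|g|}/M!$, which isolates the terms of order up to $B^{-N}$ and leaves an explicit remainder.
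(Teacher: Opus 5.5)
The paper does not reprove this theorem; it quotes it from Harris--Schoenfeld, and your saddle-point scheme is the argument behind it. The shape of $\varphi_N$ that the paper later extracts from their proof is $\max\{X,Y,Z\}$, with a truncation term $X$ of size $B(r_n)^{-(N+1)}$, a Gaussian-tail term $Y$ of the form $e^{-Bd^2}/(d\sqrt{B})$, and an off-saddle term $Z$ carrying $\max|f|$ on the rest of the contour. These correspond to your truncation errors, your Gaussian extension, and your contour tail.

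One step is wrong as written, though. On the circle $z=r_ne^{i\theta}$ with measure $d\theta$, the linear term of $\log f(r_ne^{i\theta})-in\theta$ is $i\theta\,(B_1(r_n)-n)$. It therefore vanishes for $B_1=n$, not under the normalization $B_1(r_n)=n+1$. After your rewriting, the exponent is exactly
\[
Bw^2-B\sum_{j\ge1}C_j\,w^{j+2}+\log(1+w),\qquad w=e^{i\theta}-1 .
\]
So the term $\log(1+w)=i\theta$ survives. Moreover, since $Bw^2=-B\theta^2-iB\theta^3+\cdots$, the coefficient of $(i\theta)^{j+2}$ is not $-BC_j$ but carries extra pure-number terms. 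After $\theta=t/\sqrt{B}$, all of these are of the same order $B^{-1/2}$ as the $C_1$-term, so your displayed exponent is not the integrand on the arc.

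The fix is to use $e^{i\theta}\,d\theta=dw/i$, i.e.\ to integrate $\exp\bigl(Bw^2-B\sum_j C_jw^{j+2}\bigr)\,dw$. Then deform the arc, inside the zero-free disc of (B), onto the vertical segment $w=i\varphi$, $|\varphi|\le\sin\delta_n$. The two short connecting pieces are exponentially small in $B(r_n)\delta_n^2$. On the segment the exponent is exactly $-B\varphi^2-B\sum_jC_j(i\varphi)^{j+2}$ with measure $d\varphi$; this is the contour the normalization $B_1(r_n)=n+1$ is designed for. Your term-by-term Gaussian integration then produces the $F_k$ as polynomials in $C_1,\dots,C_{2k}$.

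One further bookkeeping point: truncating $\exp(g)$ after $\sum_{m<M}g^m/m!$ does not by itself isolate the orders up to $B^{-N}$, because each $g^m$ is still an infinite series in $w$. You also have to cut each $g^m$ at the appropriate power of $w$ and bound its tail using $|C_j|\le E_nD_n^j$ and $D_nd(r_n)\to0$.
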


Müller verifies that $\exp(P(z))$ for $P(z)$ as above meets the conditions (A)--(E) by explicit verification. All implicit dependencies are absolute, and all dependencies on the polynomial $P(z)$ only are denoted as $O_P(1)$ or $\ll_P$.

\begin{theorem}[\cite{muller1997finite}, Lemma 1]
Let $P(z)$ be a polynomial satisfying \cref{eq:polynomial_coefficient_conditions}. Then the function $f(z) = \exp(P(z))$ satisfies conditions \textup{(A)--(E)} with:
\begin{itemize}
    \item[\textup{(A)}] Take $R = \infty$. 
    \item[\textup{(B)}] Take $R_0$ to be the maximum of $1$ and the largest real root of $B(r)$.
    \item[\textup{(D)}] Take $n_1 \gg_P 1$, $E_n \ll_P 1$, and $D_n = 1$.
    \item[\textup{(E)}] Take $d(r) = r^{-2q/5}$.
\end{itemize}
\end{theorem}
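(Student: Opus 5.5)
We verify conditions (A)--(E) directly for $f(z)=\exp(P(z))$. Here $A(z)=P'(z)$, so $B_k(z)=z^kP^{(k)}(z)/k!$, and $B_1(r)=rP'(r)=\sum_i i c_i r^i$. Also $B(r)=\tfrac12 r(rP'(r))'=\tfrac12\sum_i i^2c_ir^i$. Condition (A) is immediate with $R=\infty$, because $P$ is a real polynomial. For (B), note that $f$ never vanishes, so only the size constraints on $d(r)$ need checking. With $d(r)=r^{-2q/5}$ we have $0<d(r)<1$ for $r>R_0\ge1$, and $r(1+d(r))<\infty$ trivially. For (C), all $c_i\ge0$ and $c_1=1$, so $B(r)>0$ and $B_1(r)\to\infty$ for $r>0$. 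Moreover $B_1$ is strictly increasing on $(0,\infty)$, so $r_n$ is well defined and unique. Since $c_q=1/q$, we get $B_1(r)\sim r^q$, hence $r_n\sim n^{1/q}$, and $B(r)\sim \tfrac q2 r^q$.

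The main work is (D). We have $B_{j+2}(r)=\sum_{i\ge j+2}\binom{i}{j+2}c_ir^i$, a finite sum that vanishes once $j+2>q$. So for $j>q-2$ only the term $(-1)^jB_1(r)/((j+2)B(r))$ survives, and it is bounded by $O_P(1)$ because $B_1/B\to 2/q$. For the finitely many $j\le q-2$, each ratio $B_{j+2}(r)/B(r)$ is a ratio of polynomials with nonnegative coefficients in which the denominator has the top degree $q$. These ratios are therefore bounded by $O_P(1)$ for $r\ge1$. Taking $D_n=1$ and $E_n=\max_j\sup_{r\ge R_1}|C_j(r)|\ll_P1$ gives (D) for all $n\ge n_1\gg_P1$. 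The one thing to be careful about is that this supremum over all $j$ is uniform. It is, because only finitely many distinct expressions occur, plus the tail family $B_1/((j+2)B)$, which is bounded by $B_1/B$.

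For (E), substitute the asymptotics. First, $B(r_n)d(r_n)^2\asymp r_n^{q-4q/5}=r_n^{q/5}\to\infty$. Next, $D_nE_nB(r_n)d(r_n)^3\ll_P r_n^{q-6q/5}=r_n^{-q/5}\to0$. Finally, $D_nd(r_n)=r_n^{-2q/5}\to0$. For (B) we also choose $R_0$ as stated, the maximum of $1$ and the largest real root of $B$, so that $B>0$ on $(R_0,\infty)$. We expect no serious obstacle. The step needing the most care is keeping the dependence of the constants in (D) purely on $P$, in the notation $\ll_P$.
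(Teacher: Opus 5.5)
Your verification is correct, and it is the same direct check of (A)--(E) that the paper relies on; the paper gives no proof of its own and simply cites Müller's explicit verification. Two of your observations carry the argument: in (D), $B_{j+2}\equiv 0$ for $j>q-2$, so only finitely many ratios need bounding, and in (E), $B(r_n)\asymp r_n^q$ places the exponent $-2q/5$ between $-q/2$ and $-q/3$. If you want the constants in (D) fully explicit, termwise comparison gives $B_1/B\le 2$ and $B_{j+2}/B\le \max_i 2\binom{i}{j+2}/i^2$ for all $r>0$, which makes the limiting argument unnecessary.
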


For our application, we also need explicit bounds on $K_N$, $L_N$ and $\varphi_N(n,d)$ in terms of $N$. We derive these from the proofs in \cite{harris1968asymptotic} and \cite{muller1997finite}.

\begin{proposition}
Let $P(z)$ be a polynomial satisfying \cref{eq:polynomial_coefficient_conditions}. Then
\[
    \varphi_N(n) \ll O_P(1)^{N+1} n^{-N-1}, \quad
    K_N = O_P(1) N^{O_P(1)}, \quad
    L_N = (O_P(1) N)^{O(1) N}.
\]
\end{proposition}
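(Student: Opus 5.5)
We will re-run the proof of \cite[Theorem 1]{harris1968asymptotic}, keeping track of every constant. We then feed in Müller's explicit data: $D_n=1$, $E_n\ll_P 1$ and $d(r)=r^{-2q/5}$. The first step is the saddle-point scale. Write $B_1(r)=rP'(r)=\sum_i i c_i r^i$. Then $B_1(r_n)=n+1$ forces $r_n\asymp_P n^{1/q}$. Since $B(r)=rB_1'(r)/2=\sum_i i^2c_ir^i/2$, we also get $B(r_n)\asymp_P n$. Consequently $d(r_n)=r_n^{-2q/5}\asymp_P n^{-2/5}$, so $B(r_n)d(r_n)^2\asymp_P n^{1/5}$. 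These are the only quantities entering $\varphi_N$ besides $\max|f|$ on the contour.

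Next we recall the structure of the Harris--Schoenfeld argument. Use Cauchy's formula on the circle $|z|=r_n$ and split it into an arc $|\theta|\le d(r_n)$ and its complement. On the arc, expand $\log f(r_ne^{i\theta})-\log f(r_n)-in\theta$ as
\[
-B(r_n)\theta^2+B(r_n)\sum_{j\ge1}C_j(r_n)(i\theta)^{j+2}
\]
(up to the normalisation conventions). Exponentiate the tail. Truncate the resulting power series in $\theta$ at order $2N+2$. Integrate term by term against the Gaussian, which produces the terms $F_k(n)/B(r_n)^k$.

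This gives three error sources.
\begin{enumerate}
\item The truncation remainder of the exponential of the tail. With $|C_j|\le E_n$ and $|\theta|\le d$, the tail is bounded by $O_P(1)B d^3\ll_P n^{-1/5}$, which is small. The degree-$(2N+3)$ Taylor remainder, integrated against $e^{-B\theta^2}$, is then bounded by a Gaussian moment of order $2N+3$. That moment is at most $(O(1)N)^{N}B^{-N-3/2}$, because $\int\theta^{2m}e^{-B\theta^2}\,d\theta=\Gamma(m+1/2)B^{-m-1/2}$. This is where $L_N=(O_P(1)N)^{O(1)N}$ comes from. The term counts are controlled by the Bell-polynomial coefficient bounds: the number of compositions of $2N$ into parts, weighted by $E_n^{\ell}$, is at most $O_P(1)^{N}$.
\item The Gaussian tails $\int_{|\theta|>d}e^{-B\theta^2}\,d\theta\le e^{-Bd^2}\ll e^{-c_Pn^{1/5}}$.
\item The off-arc contribution. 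Following Müller, bound $|f(r_ne^{i\theta})|/f(r_n)=\exp(\Re P(r_ne^{i\theta})-P(r_n))$. Because $c_1=1$ and all $c_i\ge0$, we have $\Re P(re^{i\theta})-P(r)\le -c_Pr^q\theta^2$ for $|\theta|\le\pi/q$, which again gives a bound like $e^{-c_Pn^{1/5}}$. For larger $|\theta|$ we need a separate estimate of the same exponential-saving type.
\end{enumerate}

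We then normalise by $f(r_n)/(2r_n^n\sqrt{\pi B})$. The main term becomes $O_P(1)^{N+1}B^{-N-1}\ll O_P(1)^{N+1}n^{-N-1}$, which gives the stated $\varphi_N$. The exponentially small errors from items 2 and 3 must be absorbed into $\varphi_N(n)$. For this we need $e^{-c_Pn^{1/5}}\le O_P(1)^{N+1}n^{-N-1}$, which holds once $n^{1/5}\gg_P N\log n$. We therefore take $K_N=O_P(1)N^{O_P(1)}$: a power like $N^{6}$ suffices, up to logarithms absorbed into the constant. Larger $K_N$ also guarantees $n\ge n_1$ and $r_n>R_1$, and makes the tail small enough for the series manipulations, e.g.\ $Bd^3\le 1/2$.

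I expect the main obstacle to be item 1. Harris--Schoenfeld state the bound with an unspecified $L_N$. We must check that no step introduces factorial growth worse than $N^{O(N)}$, in particular the combination of Gaussian moments $\Gamma(N+3/2)$ with the combinatorial count of terms. If a naive bound gives $(2N)!$ in place of $N!$, that is still of the form $(O(1)N)^{O(1)N}$, so the plan has some slack there.
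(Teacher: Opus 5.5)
Your plan has the same architecture as the paper's proof. Your three error sources are exactly the pieces $X(n)$ (truncation), $Y(n)$ (Gaussian tails) and $Z(n)$ (off-arc) of Müller's $\varphi_N=\max\{X,Y,Z\}$. Likewise, $K_N$ and $L_N$ come from tracking where Harris--Schoenfeld need $n$ large or pick up $N$-dependent constants. However, two of your steps are wrong as written.

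The main problem is item~1. You cannot truncate the expansion of $\exp\bigl(B\sum_{j\ge1}C_j(i\theta)^{j+2}\bigr)$ in powers of $\theta$ at a fixed order, because its Taylor coefficients grow with $B$.

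\begin{itemize}
\item A product of $\ell$ tail factors has the form $B^\ell\theta^m$ with $m\ge 3\ell$. After integration against $e^{-B\theta^2}$ it contributes at relative order $B^{\ell-m/2}$.
\item Hence the coefficient of $B^{-k}$ contains monomials up to $\theta^{6k}$, for example $(BC_1\theta^3)^{2k}$.
\item Cutting at $\theta^{2N+2}$ therefore loses part of $F_k$ for every $k>(N+1)/3$. It leaves an error of relative size roughly $B^{-N/3}$, not $B^{-N-1}$.
\item Your remainder bound also fails: the Taylor remainder is not $O(|\theta|^{2N+3})$ uniformly in $B$.
\end{itemize}

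The correct bookkeeping, which is what Harris--Schoenfeld do, rescales $\theta=t/\sqrt{B}$. One then expands in powers of $B^{-1/2}$ up to order about $2N+2$; the coefficient of $B^{-k/2}$ is a polynomial in $t$ of degree at most $3k$. The Gaussian moments then run up to order about $6N$. This is the origin of the factor $\Gamma(3N+7/2)\le(7N)^{7N}$ in the bound for $Y_6$ that the paper quotes. The outcome is still $L_N=(O_P(1)N)^{O(1)N}$, so your conclusion survives, but not by the mechanism you describe.

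Second, in item~3 the saving for $|\theta|>\pi/q$ is not of the type $e^{-cn^{1/5}}$.

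\begin{itemize}
\item At $\theta=2\pi/q$ the top-degree term gives nothing.
\item Take $P(z)=z+z^q/q$, the case $G=C_q$ with $q$ prime. There the only saving at that point is $r_n(1-\cos(2\pi/q))$, coming from $c_1=1$.
\item This is the second exponential $\exp(-Ar_n)\approx\exp(-A'n^{1/q})$ in the paper's bound for $Z(n)$.
\item Absorbing it into $n^{-N-1}$ needs $n$ of size roughly $(N\log N)^q$. So your claim that $N^6$ suffices fails for $q\ge 6$.
\end{itemize}

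The paper instead takes $n\gg_P N^{\max\{10,2q\}}$. This is still of the form $O_P(1)N^{O_P(1)}$, as the statement permits.
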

\begin{proof}
Let us first deal with $\varphi_N(n)$. We follow \cite[Lemma~2]{muller1997finite}. We have
    \[
        \varphi_N(n) =
        \max \left\{ X(n), Y(n), Z(n) \right\},
    \]
    where the three terms are defined and bounded as follows. The first term is, for some explicit constant $q_0$ depending only on $P$,
    \[
    X(n) = 
    \left(\frac{q_0}{\sqrt{B(r_n)}}\right)^{2N+2} \leq
    O_P(1)^{N+1} r_n^{-q(N+1)} \leq
    O_P(1)^{N+1} n^{-(N+1)},
    \]
    We have used $B(r_n) \geq r_n^q$ in the first inequality and $n < B_1(r_n) \ll_P r_n^q$ in the second. For the second term, we first have
    \[
    Y(n) = \frac{\exp(-B(r_n) r_n^{-4q/5})}{r_n^{-2q/5} \sqrt{B(r_n)}} \leq
    r_n^{9q/10} \exp(- r_n^{q/5}) \leq 
    (2n)^{9/10} \exp(- (2n)^{1/5}),
    \]
    where we have again used $B(r_n) \geq r_n^q$ and $2n \geq B_1(r_n) \geq r_n^q$, and assumed $n$ is large enough so that $r_n > 2000$. Taking $n \gg N^{10}$ and using $e^{-t} \leq t^{-k}$ for $t \geq k^2$ with $t = (2n)^{1/5}$ and $k = 5N+10$, we thus obtain
    \[
    Y(n) \ll (2n)^{9/10 - (5N+10)/5} \ll n^{-(N+1)}.
    \]
    Finally, for the third term, we have, for all $n \gg_P 1$,
    \[
        Z(n) = 
        \lambda(r_n) \sqrt{B(r_n)} \leq 
        \max \left\{ 
            \exp(-((q-1)/2)^2 q^{-1} r_n^{q/5}),
            \exp(-A r_n)
         \right\},
    \]
    where $A \gg_P 1$. Using $2n \geq B_1(r_n) \geq r_n^q$, we thus obtain
    \[
    Z(n) \leq
    \max \left\{ 
        \exp(- (2n)^{1/5}/8),
        \exp(- A (2n)^{1/q})
        \right\}.
    \]
    Taking $n \gg_P N^{\max \{10, 2q \}}$, we can again bound the exponentials as in the previous case to obtain $Z(n) \ll n^{-(N+1)}$.

Let us now turn to the constants $K_N$ and $L_N$. From the argument above, we see that we must certainly take $K_N \gg_P N^{O_P(1)}$. Other restrictions on $K_N$, $L_N$ arise from:
\begin{itemize}
    \item condition (D) in \cite{harris1968asymptotic}, where we need $n \gg_{P} 1$, 
    \item condition (E) in \cite{harris1968asymptotic} in:
        \begin{itemize}
            \item bounding $\abs{Y_3} \leq \frac{1}{2}$, where we need $B(r_n) d(r_n)^3 \ll_q 1$, thus $n \gg_P 1$,
            \item equation (20) for bounding $V_p(\varphi) \ll \beta_n \varphi^2 (e \varphi)^{2p} + (e E_n \beta_n \varphi^3)^{2p}$, where we need $1 - e \abs{\varphi} \ll 1$, thus $n \gg_P 1$,
            \item bounding $\Delta_n \geq 6N$, where we need $n \gg_P (6N)^5$,
            \item bounding $V_1(\delta_n) \leq 1$, where we need $n \gg_P 1$,
        \end{itemize}
    \item equation (16) in \cite{harris1968asymptotic} for bounding $\abs{Y_4} \ll O_P(1)^{N+1} Y_5$,
    \item just after equation (20) for bounding $V_{N+1}(\varphi) \ll O(1)^{N+1} Y_5$,
    \item bounding $Y_6 \ll E_n \beta_n^{-N-3/2} (E_n^{2N+1} + 1) \cdot \Gamma(3N + 7/2)$, where $\Gamma$ is Euler's gamma function that can be bounded as $\Gamma(3N+7/2) \leq (3N+4)! \leq (7N)^{7N}$. 
\end{itemize}
Overall, we can thus take $K_N = O_P(1) N^{O_P(1)}$ and $L_N = (O_P(1) N)^{O(1) N}$.
\end{proof}

Let 
\[
    \mathcal M(r) = \frac{\exp(P(r))}{2 r^n \sqrt{\pi B(r)} }, \quad
    \mathcal S_N(r) = 1 + \sum_{k=1}^N \frac{P_k(r)}{B(r)^{3k}},
\]
where $P_k(r)$ are as in \cite[Proposition 1]{muller1997finite}. It follows from all the above that the asymptotic expansion from Harris and Schoenfeld reduces to the following.

\begin{theorem}[quantitative form of \cite{muller1997finite}, Proposition 1]
    \label{thm:quantitative_muller_rn}
    Let $P(z)$ be a polynomial satisfying \cref{eq:polynomial_coefficient_conditions}. Then for all $N \geq 0$ and $n \gg_P N^{O_P(1)}$, we have
    \[
        \alpha_n = \mathcal M(r_n) \left( \mathcal S_N(r_n) + E_{n,N} \right), 
        \quad
        \abs{E_{n,N}} \leq (O_P(1)N)^{O(1)N} n^{-N-1}.
    \]
\end{theorem}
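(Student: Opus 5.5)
The plan is to combine the Harris--Schoenfeld expansion, made quantitative in the preceding proposition, with Müller's rewriting of that expansion in terms of $B(r_n)$. Concretely, Theorem 1 of \cite{harris1968asymptotic}, applied to $f=\exp(P)$ via Müller's Lemma 1, gives for $n\ge K_N$
\[
\alpha_n=\mathcal M(r_n)\Bigl(1+\sum_{k=1}^N \frac{F_k(n)}{B(r_n)^k}+\theta\, L_N\varphi_N(n)\Bigr),\qquad |\theta|\le 1 .
\]
Here I use that $f(r_n)=\exp(P(r_n))$, so the normalising factor $f(r_n)/(2r_n^n\sqrt{\pi B(r_n)})$ is exactly $\mathcal M(r_n)$. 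By the preceding proposition we have $K_N=O_P(1)N^{O_P(1)}$, which yields the range $n\gg_P N^{O_P(1)}$. We also have $L_N\varphi_N(n)\le (O_P(1)N)^{O(1)N}O_P(1)^{N+1}n^{-N-1}$, and this is of the claimed form $(O_P(1)N)^{O(1)N}n^{-N-1}$ once the factor $O_P(1)^{N+1}$ is absorbed.

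Next I would identify $1+\sum_{k\le N}F_k(n)/B(r_n)^k$ with $\mathcal S_N(r_n)$, following \cite[Proposition~1]{muller1997finite}. Müller observes that each $C_j(r)$ is a rational function of $r$ whose denominator is $B(r)$. Hence $F_k(n)$, being a polynomial in $C_1(r_n),\dots,C_{2k}(r_n)$, can be written as $P_k(r_n)/B(r_n)^{2k}$, which gives $F_k/B^k=P_k/B^{3k}$. If this identification is exact term by term, nothing more needs to be shown. If instead Müller's $\mathcal S_N$ arises by truncating or re-expanding, the difference consists of terms of the form $P_k(r_n)/B(r_n)^{3k}$ with $k>N$ that behave like $O(B(r_n)^{-N-1})$. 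In that case I must bound these terms quantitatively in $N$.

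This quantitative bound is the step I expect to be the main obstacle. My first approach would be Cauchy-type estimates. Condition (D) holds with $D_n=1$ and $E_n\ll_P 1$, so $|C_j(r_n)|\le O_P(1)$. Moreover, $F_k$ is a sum over compositions of $2k$ (equivalently, over Hermite-moment terms) with coefficients bounded by $(O(1)k)^{O(k)}$. This gives $|F_k(n)|\le (O_P(1)k)^{O(k)}$, and therefore each discarded term is at most $(O_P(1)N)^{O(1)N}B(r_n)^{-N-1}$.

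Finally I would convert powers of $B(r_n)$ into powers of $n$. Since $B(r_n)\ge r_n^q$ and $n<B_1(r_n)\ll_P r_n^q$, we get $B(r_n)^{-N-1}\le O_P(1)^{N+1}n^{-N-1}$. Collecting all the error terms into $E_{n,N}$ then proves the theorem.
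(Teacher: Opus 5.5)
Your proposal is correct and follows the same route as the paper. The paper inserts the quantitative bounds $K_N = O_P(1)N^{O_P(1)}$, $L_N = (O_P(1)N)^{O(1)N}$ and $\varphi_N(n) \ll O_P(1)^{N+1}n^{-N-1}$ from the preceding proposition into Harris--Schoenfeld's Theorem~1, and then uses M\"uller's rewriting $F_k(n)/B(r_n)^k = P_k(r_n)/B(r_n)^{3k}$. That rewriting is exact term by term: each $C_j = W_j/B$ with $W_j$ a polynomial, and $F_k$ is a combination of products $C_{j_1}\cdots C_{j_i}$ with $i \le 2k$ and $j_1+\cdots+j_i=2k$. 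So your fallback branch about discarded terms with $k>N$, and the conversion of $B(r_n)^{-N-1}$ into $n^{-N-1}$, are not needed.
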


\subsection{Approximating $r_n$}

In order to convert the asymptotic expansion above into an asymptotic expansion in terms of $n$, we need to expand $r_n$ in terms of $n$ as in \cite[Section 3.6]{muller1997finite} and then use this to expand all the other terms in the asymptotic expansion from the previous section.

\subsubsection{Expanding $r_n$}

Define the polynomial
\[
Q(z) = z^{q-1} P'(1/z) = \sum_{i = 1}^q i c_i z^{q - i}
\] 
with $Q(0) = 1$. Let $\phi(z) = Q(z)^{1/q}$. This is an analytic function on a disc centered at $0$ with $\phi(0) = 1$. Thus the function $z \mapsto z/\phi(z)$ is also analytic with derivative $1$ at $z = 0$, and hence it is invertible in a neighborhood of $0$. In particular, there are $0 < \epsilon, \delta < 1$ such that for every $w$ with $\abs{w} < \epsilon$, the equation $w \phi(z) = z$ has a unique solution in the domain $\abs{z} < \delta$. Moreover, by Lagrange inversion, this solution can be expressed as a power series
\[
    z = \sum_{\nu = 1}^\infty \frac{\beta_\nu}{\nu} w^\nu,
\]
where the $\beta_\nu$ can be computed explicitly in terms of $P$ (see \cite[formula (17)]{muller1997finite}).

Let us use the above with $w = (n+1)^{-1/q}$ and write $z = r^{-1}$. For $n \gg_\epsilon 1$, the equation $r P'(r) = n+1$ therefore has a unique solution $r_n$ in the domain $\abs{r} > 1/\delta$, and this solution is given by

\[
    \frac{1}{r_n} = \sum_{\nu = 1}^\infty \frac{\beta_\nu}{\nu} (n+1)^{-\nu/q}
    = n^{-1/q} \left( 1 + \sum_{i = 1}^\infty \gamma_i n^{-i/q} \right) ,
\]
where we obtained the coefficients $\gamma_i$ as in \cite[formula (24)]{muller1997finite} by expanding $(n+1)^{-\nu/q} = n^{-\nu/q} (1 + n^{-1})^{-\nu/q}$. Thus
\[
    r_n = n^{1/q} \left(1 + G(n^{-1/q})\right)^{-1}, \qquad
    G(w) = \sum_{i = 1}^\infty \gamma_i w^{i}.
\]
The function $G(w)$ is also analytic on the disc centered at $0$ with radius $\epsilon$. By Cauchy's estimate, it follows that the coefficients $\gamma_i$ satisfy $\abs{\gamma_i} \leq O_P(1) \epsilon^{-i}$.

\subsubsection{Approximating $r_n$ by $\rho_n(s)$}

For $s \geq 1$, let
\[
    \rho_n(s) = n^{1/q} \left( 1 + G_{s}(n^{-1/q}) \right)^{-1}, \qquad
    G_{s}(w) = \sum_{i = 1}^{q+s-1} \gamma_i w^{i}.
\]
Thus $G_{s}$ is the truncation of $G$ to its first $q+s-1$ terms. The value $\rho_n(s)$ is well defined for $n \gg_P 1$.

\begin{lemma}\label{lemma:rn_vs_rhon}
For all $s \geq 1$ and all $n \gg_P 1$,
\[
    \abs{r_n - \rho_n(s)} \ll_P (\epsilon/2)^{-s} n^{-(q+s-1)/q}.
\]
\end{lemma}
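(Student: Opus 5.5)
Write $w = n^{-1/q}$, $G(w) = G_s(w) + T_s(w)$ with tail $T_s(w) = \sum_{i \ge q+s} \gamma_i w^i$. Then
\[
r_n - \rho_n(s) = n^{1/q}\left( \frac{1}{1+G(w)} - \frac{1}{1+G_s(w)} \right) = -\,n^{1/q}\,\frac{T_s(w)}{(1+G(w))(1+G_s(w))}.
\]
The plan is to bound the numerator by the Cauchy estimate for the tail and to show that both denominators are bounded below by a constant, say $1/2$, for $n \gg_P 1$ uniformly in $s$.

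For the tail, use $\abs{\gamma_i} \le O_P(1)\epsilon^{-i}$. Once $n \gg_P 1$ we have $\abs{w}/\epsilon \le 1/2$, hence
\[
\abs{T_s(w)} \le O_P(1)\sum_{i\ge q+s} (\abs{w}/\epsilon)^i \le O_P(1)\,(\abs{w}/\epsilon)^{q+s} \le O_P(1)\,\epsilon^{-q-s} n^{-(q+s)/q}.
\]
Multiplying by $n^{1/q}$ gives $O_P(1)\epsilon^{-q}\epsilon^{-s} n^{-(q+s-1)/q}$. The factor $\epsilon^{-q}$ depends only on $P$, and $\epsilon^{-s} \le (\epsilon/2)^{-s}$, so this is the claimed bound. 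The statement allows the slack $(\epsilon/2)^{-s}$, so even a cruder tail estimate, using the radius $\epsilon/2$ to absorb constants, would be enough.

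For the denominators, the point is that the lower bound must not depend on $s$. Since $G(0) = 0$ and $\abs{G(w)} \le O_P(1)\sum_{i\ge1}(\abs{w}/\epsilon)^i \le O_P(1)\abs{w}/\epsilon$, the same geometric bound applies to every partial sum $G_s$, uniformly in $s$. So for $n \gg_P 1$, with no dependence on $s$, we get $\abs{G(w)}, \abs{G_s(w)} \le 1/2$, both denominators are at least $1/2$ in absolute value, and $\rho_n(s)$ is well defined. This uniformity in $s$ is the only step needing care, and the geometric-series domination handles it; the rest is routine.
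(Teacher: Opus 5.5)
Your proof is correct and follows essentially the same route as the paper: you bound the tail $G - G_s$ by Cauchy's estimate, keep $G$ and $G_s$ small uniformly in $s$, and then control the difference of reciprocals, which the paper phrases as a Lipschitz bound for $x \mapsto (1+x)^{-1}$ rather than your explicit formula with the two denominators. The only cosmetic difference is that you bound $G_s$ directly by geometric-series domination, whereas the paper deduces the bound on $G_s$ from the a priori bound $\abs{G(n^{-1/q})} \ll_P n^{-1/q}$ together with the tail estimate.
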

\begin{proof}
Observe that
\[
    \abs{G(n^{-1/q}) - G_{s}(n^{-1/q})} \leq
    O_P(1) n^{-(q+s)/q} \sum_{i = q+s}^\infty \epsilon^{-i} n^{-(i - q - s)/q}.
\]
Since $n^{-1/q} < \epsilon/2$ for $n \gg_P 1$, the above is at most
\[
    O_P(1) (\epsilon/2)^{-s} n^{-(q+s)/q}.
\]
In particular, since we have the \emph{a priori} bound 
\[
\abs{G(n^{-1/q})} \ll_P n^{-1/q},\]
it follows that for $n \gg_P 1$,
\[\abs{G_{s}(n^{-1/q})} 
\ll_P n^{-1/q} .
\]
Now, the function $x \mapsto (1+x)^{-1}$ is Lipschitz for $\abs{x} < \delta$ with constant $1/(1-\delta)^2$. Thus, for $n \gg_P 1$, we obtain
\[
    \abs{r_n - \rho_n(s)} \leq
    \frac{n^{1/q}}{(1 - \delta)^2} \cdot
    O_P(1) (\epsilon/2)^{-s} n^{-(q+s)/q} =
    O_P(1) (\epsilon/2)^{-s} n^{-(q+s-1)/q}. \qedhere
\]
\end{proof}

\subsubsection{Replacing $r_n$ by $\rho_n(s)$ in \Cref{thm:quantitative_muller_rn}}

\begin{theorem}[quantitative form of \cite{muller1997finite}, formula (33)]
    \label{quantitative_muller_rhon}
    For all $s \geq 1$, $N \geq 0$ and $n \gg_P N^{O_P(1)}$,
    \[
    \alpha_n = \mathcal M(\rho_n(s)) \left( \mathcal S_N(\rho_n(s)) + E_{n,N,s} \right) ,
    \]
    where
    \[
    \abs{E_{n,N,s}} \ll
    (O_P(1) N)^{O(1) N} n^{-N-1} + 
    (O(1)N)^{O(1)N} O_P(1)^s n^{-s/q}.
    \]
\end{theorem}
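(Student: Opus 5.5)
We start from \Cref{thm:quantitative_muller_rn}, which gives $\alpha_n = \mathcal M(r_n)(\mathcal S_N(r_n) + E_{n,N})$. The task is to replace $r_n$ by $\rho_n(s)$ in both factors. Write
\[
\alpha_n = \mathcal M(\rho_n(s)) \cdot \frac{\mathcal M(r_n)}{\mathcal M(\rho_n(s))}\bigl(\mathcal S_N(r_n)+E_{n,N}\bigr),
\]
and set $E_{n,N,s}$ equal to this expression divided by $\mathcal M(\rho_n(s))$, minus $\mathcal S_N(\rho_n(s))$. We must then bound three quantities:
\begin{enumerate}
\item the ratio $\mathcal M(r_n)/\mathcal M(\rho_n(s)) - 1$;
\item the difference $\mathcal S_N(r_n)-\mathcal S_N(\rho_n(s))$;
\item the product of the ratio with $E_{n,N}$, which is harmless once the ratio is $O(1)$ and $\mathcal S_N = O_P(1)$.
\end{enumerate}
Throughout, set $\delta_n = r_n - \rho_n(s)$. \Cref{lemma:rn_vs_rhon} gives $|\delta_n| \ll_P (\epsilon/2)^{-s} n^{-(q+s-1)/q}$, and both $r_n$ and $\rho_n(s)$ are $\asymp n^{1/q}$.

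\textbf{The ratio of $\mathcal M$'s.} We have
\[
\log \mathcal M(r) = P(r) - n\log r - \tfrac12\log B(r) + \text{const}.
\]
The key point is that $r_n$ is a critical point of $P(r)-n\log r$ up to the shift $n\mapsto n+1$. Indeed, $rP'(r)=B_1(r)=n+1$ at $r_n$, so $\frac{d}{dr}(P(r)-n\log r)$ at $r_n$ equals $1/r_n \asymp n^{-1/q}$. The second derivative on the segment between $r_n$ and $\rho_n(s)$ is $O_P(n^{(q-2)/q})$. A Taylor expansion therefore gives
\[
\bigl|\log\mathcal M(r_n)-\log\mathcal M(\rho_n(s))\bigr|
\ll_P n^{-1/q}|\delta_n| + n^{(q-2)/q}|\delta_n|^2 + n^{-1/q}|\delta_n|,
\]
where the last term accounts for $\log B$, since $B'/B \asymp 1/r$. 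Inserting the bound on $\delta_n$, each term is at most $O_P(1)^s n^{-s/q}$. Here the exponents $-(q+s)/q$ and $(q-2)/q - 2(q+s-1)/q$ are both $\le -s/q$, after absorbing the $(\epsilon/2)^{-2s}$ factor into $O_P(1)^s$. Since this quantity is small for $n\gg_P$ (depending on $s$ only through the size of the error being claimed), exponentiating yields a ratio of $1+O_P(1)^s n^{-s/q}$.

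\textbf{The difference of the $\mathcal S_N$'s.} Each $P_k(r)/B(r)^{3k}$ is a rational function of $r$ of degree $\le -k q$ in the appropriate sense; Müller's $P_k$ is a polynomial of controlled degree. On the annulus $|r|\asymp n^{1/q}$, Cauchy estimates bound its derivative by $O_P(1)^k\,(\text{combinatorial factor in }k)\cdot n^{-k-1/q}$. The main obstacle is getting explicit control of the $P_k$ coefficients: they come from the Harris--Schoenfeld $F_k$, built from $C_j$ with $j\le 2k$ via Bell-polynomial-type sums. Because $D_n=1$ and $E_n\ll_P1$, these are bounded by $(O(1)k)^{O(1)k}O_P(1)^k$. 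Summing over $k\le N$ and multiplying by $|\delta_n|$ gives $(O(1)N)^{O(1)N}O_P(1)^s n^{-s/q}$. This is the source of the second error term, with its $N$-dependence coming from the number and size of the $P_k$.

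Finally, collecting the three contributions gives
\[
|E_{n,N,s}| \le \bigl(1+O_P(1)^sn^{-s/q}\bigr)|E_{n,N}| + |\mathcal S_N(\rho_n(s))|\cdot O_P(1)^s n^{-s/q} + (O(1)N)^{O(1)N}O_P(1)^sn^{-s/q},
\]
using $|\mathcal S_N| \le (O(1)N)^{O(1)N}$. This produces exactly the stated bound, valid in the range $n\gg_P N^{O_P(1)}$ inherited from \Cref{thm:quantitative_muller_rn}.
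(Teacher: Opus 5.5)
Your proof is correct and has the same structure as the paper's. You start from \Cref{thm:quantitative_muller_rn} and bound $\mathcal M(r_n)/\mathcal M(\rho_n(s))-1$ and $\mathcal S_N(r_n)-\mathcal S_N(\rho_n(s))$ by derivative estimates times $|r_n-\rho_n(s)|$ from \Cref{lemma:rn_vs_rhon}. The differences are minor:
\begin{itemize}
\item The paper uses the crude bound $|\Psi'|\ll_P n^{(q-1)/q}$. Against the exponent $-(q+s-1)/q$ this already gives exactly $n^{-s/q}$, so your near-criticality refinement $P'(r_n)-n/r_n=1/r_n$ is valid but unnecessary.
\item The paper handles the cross term via $|\alpha_n/\mathcal M(r_n)|\le 2$ (the case $N=0$) rather than by bounding $|\mathcal S_N|$.
\item The paper bounds $P_k$ and $P_k'$ on the real segment from M\"uller's explicit formula, where you use Cauchy estimates.
\end{itemize}

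One justification needs repair. Your Cauchy estimates require $|C_j|\ll_P 1$ uniformly on complex discs of radius $\asymp n^{1/q}$. Condition (D) does not give this, since it only concerns the single point $r=r_n$. The bound does follow directly from $C_j=-\bigl(B_{j+2}+(-1)^jB_1/(j+2)\bigr)/B$: here $B_{j+2}\equiv 0$ for $j>q-2$, and $B_1$, $B$ are polynomials of degree $q$ with $B$ nonvanishing for $|r|\gg_P 1$.
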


We need three preliminary lemmas that will combine to prove the above theorem.

\begin{lemma}
For all $s \geq 1$ and $n \gg_P 1$,
\[
    \abs{\frac{\alpha_n}{\mathcal M(r_n)}} \ll 1.
\]
\end{lemma}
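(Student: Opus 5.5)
We deduce this bound from \Cref{thm:quantitative_muller_rn} with $N = 0$. In that case $\mathcal S_0(r) = 1$, so the theorem gives, for $n \gg_P 1$,
\[
    \alpha_n = \mathcal M(r_n)\bigl(1 + E_{n,0}\bigr),
    \qquad
    \abs{E_{n,0}} \leq (O_P(1)\cdot 0)^{O(1)\cdot 0}\, n^{-1}.
\]
Here we use the convention $0^0 = 1$. To avoid any ambiguity in this degenerate exponent, one can instead apply the theorem with $N = 1$ and discard the correction term. Then $\mathcal S_1(r_n) = 1 + P_1(r_n)/B(r_n)^3$ and $\abs{E_{n,1}} \le O_P(1) n^{-2}$, so it remains to bound $\abs{P_1(r_n)}/B(r_n)^3$ uniformly.

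The plan is to use the $N=0$ statement directly. Then $\abs{\alpha_n/\mathcal M(r_n)} \le 1 + O_P(1)/n$, and this is at most $2$ once $n \gg_P 1$. This gives an absolute implied constant, as the statement requires, provided $n$ is large in terms of $P$. The threshold $n \gg_P N^{O_P(1)}$ with $N = 0$ becomes simply $n \gg_P 1$. The bound does not depend on $s$ at all; $s$ only appears in the statement for uniformity with the later lemmas.

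The only point needing care is confirming that the $N = 0$ case is genuinely covered by the quantitative Harris--Schoenfeld bound, i.e.\ that the constants $K_0$ and $L_0$ are finite and depend only on $P$. This follows from the preceding proposition, where $K_N = O_P(1) N^{O_P(1)}$ and $L_N = (O_P(1)N)^{O(1)N}$ are interpreted as $O_P(1)$ at $N = 0$, together with $\varphi_0(n) \ll O_P(1) n^{-1}$.

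If one prefers the $N = 1$ route, the extra step is to note from Müller's formulas that $P_1(r)$ is a polynomial in $r$ of degree at most $3q$, while $B(r) \ge r^q$. Hence $P_1(r_n)/B(r_n)^3 = O_P(1)$, and indeed it is $o(1)$ once one checks the degrees more carefully. This again gives a bounded ratio for $n \gg_P 1$.
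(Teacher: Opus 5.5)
Your proposal is correct and is the paper's own argument: take $N=0$ in Theorem~\ref{thm:quantitative_muller_rn}, so that $\alpha_n = \mathcal M(r_n)(1+E_{n,0})$ with $\abs{E_{n,0}} \ll_P n^{-1}$, and hence $\abs{\alpha_n/\mathcal M(r_n)} \le 2$ once $n \gg_P 1$. Your optional $N=1$ route and your remark on reading $K_0, L_0$ as $O_P(1)$ are harmless and not needed.
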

\begin{proof}
As the approximation improves for larger $N$, we can choose $N = 0$ in \Cref{thm:quantitative_muller_rn}. Therefore $\alpha_n = \mathcal M(r_n) (1 + E_{n,0})$ with $\abs{E_{n,0}} \ll_P n^{-1}$. For $n \gg_P 1$, we thus have $\abs{\alpha_n / \mathcal M(r_n)} \leq 2$.
\end{proof}

\begin{lemma}
For all $s \geq 1$ and $n \gg_P 1$,
\[
\abs{\frac{\mathcal M(r_n)}{\mathcal M(\rho_n(s))} - 1} \ll_P (\epsilon/2)^{-s} n^{-s/q}.
\]
\end{lemma}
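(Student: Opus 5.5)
We need to bound $\mathcal M(r_n)/\mathcal M(\rho_n(s)) - 1$, where $\mathcal M(r) = \exp(P(r))/(2r^n\sqrt{\pi B(r)})$. We work with the logarithm
\[
L(r) = P(r) - n\log r - \tfrac12 \log B(r),
\]
so that $\mathcal M(r_n)/\mathcal M(\rho_n(s)) = \exp\bigl(L(r_n)-L(\rho_n(s))\bigr)$. It therefore suffices to show $\abs{L(r_n)-L(\rho_n(s))} \ll_P (\epsilon/2)^{-s} n^{-s/q}$. Once this quantity is at most $1$, we can use $\abs{e^x-1}\le 2\abs{x}$. If instead $(\epsilon/2)^{-s}n^{-s/q}$ is large, the claimed bound is not automatic, so we handle this by a separate case split. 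For $n \gg_P 1$ we have $n^{-1/q} < \epsilon/2$, hence $(\epsilon/2)^{-s}n^{-s/q} \le (\epsilon/2)^{-s}n^{-s/q}\cdot 1$. The quantity $(2/\epsilon)^s n^{-s/q} = \bigl((2/\epsilon)n^{-1/q}\bigr)^s$ is then less than $1$, so we are always in the small regime. This is the reason for requiring $n\gg_P 1$.

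\textbf{Mean value estimate.} By the mean value theorem, $L(r_n)-L(\rho_n(s)) = L'(\xi)(r_n-\rho_n(s))$ for some $\xi$ between $r_n$ and $\rho_n(s)$. Lemma~\ref{lemma:rn_vs_rhon} gives
\[
\abs{r_n-\rho_n(s)} \ll_P (\epsilon/2)^{-s} n^{-(q+s-1)/q}.
\]
So the task reduces to bounding $\abs{L'(\xi)} \ll_P n^{(q-1)/q}$ uniformly for such $\xi$. We have
\[
L'(r) = P'(r) - \frac{n}{r} - \frac{B'(r)}{2B(r)}.
\]
Note that $P'(r) - n/r = \bigl(rP'(r) - n\bigr)/r$, and $B_1(r) = rP'(r)$ for $f=\exp P$.

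\textbf{Key cancellation.} At $r=r_n$ we have $r_nP'(r_n)=n+1$, so $L'(r_n) = 1/r_n + O(B'/B)$, which is tiny. It is cleaner to Taylor expand to second order:
\[
L(\rho_n(s)) = L(r_n) + L'(r_n)\Delta + \tfrac12 L''(\xi)\Delta^2, \qquad \Delta = \rho_n(s)-r_n.
\]
The two terms are bounded as follows.
\begin{itemize}
\item $B(r)$ is a polynomial-like function comparable to $r^q$ with $B'/B \ll_P 1/r$. Hence $\abs{L'(r_n)} \ll_P 1/r_n \ll_P n^{-1/q}$.
\item $L''(r) = P''(r) + n/r^2 + O_P(r^{-2}) \ll_P r^{q-2} + n r^{-2} \ll_P n^{(q-2)/q}$ near $r_n \asymp n^{1/q}$.
\end{itemize}
These give
\[
\abs{L'(r_n)\Delta} \ll_P n^{-1/q}\,(\epsilon/2)^{-s}n^{-(q+s-1)/q} \le (\epsilon/2)^{-s}n^{-s/q},
\]
and
\[
\abs{L''\Delta^2} \ll_P n^{(q-2)/q}(\epsilon/2)^{-2s} n^{-2(q+s-1)/q},
\]
which is even smaller, since $(\epsilon/2)^{-s}n^{-s/q}\le 1$. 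The cruder first-order bound $\abs{L'(\xi)} \ll_P n^{(q-1)/q}$ would give exactly $(\epsilon/2)^{-s}n^{-s/q}\cdot O_P(1)$ as well, so either route works; I would present the first-order one, with the bound on $L'$ on the interval between $r_n$ and $\rho_n(s)$.

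\textbf{Remaining checks.} We must ensure $\xi \asymp_P n^{1/q}$ and that $B(\xi)$ is bounded away from $0$ there. This follows from $r_n = n^{1/q}(1+O_P(n^{-1/q}))$ together with the a priori bound $\abs{G_s(n^{-1/q})}\ll_P n^{-1/q}$ established in the proof of Lemma~\ref{lemma:rn_vs_rhon}. Explicitly, $B(r) = \tfrac12 r(rP'(r))' = \tfrac12 \sum_i i^2 c_i r^i$, which is a polynomial with positive leading coefficient, so $B(r)\asymp_P r^q$ and $B'(r)\ll_P r^{q-1}$ for large $r$. The main obstacle is only bookkeeping these uniform bounds on the interval, which is routine.
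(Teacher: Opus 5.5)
Your proposal is correct and follows essentially the same route as the paper's proof. Both pass to $\log\mathcal M$, bound $|P'(r)-n/r-B'(r)/(2B(r))|\ll_P n^{(q-1)/q}$ on the interval between $r_n$ and $\rho_n(s)$, apply the mean value theorem together with Lemma~\ref{lemma:rn_vs_rhon}, and conclude with $|e^x-1|\le 2|x|$.

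One detail in your write-up needs tidying. For that last step you need $(2/\epsilon)n^{-1/q}$ to be below the reciprocal of the implied constant, not merely below $1$. This still holds for $n\gg_P 1$ uniformly in $s\ge 1$.
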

\begin{proof}
Define
\[\Psi(r) = \log \mathcal M(r) = P(r) - n \log(r) - \frac{1}{2} \log(B(r)) + O(1) . \]
Let $I_{n,s}$ be the interval between $r_n$ and $\rho_n(s)$. By the mean value theorem, we have
\[
    \abs{\Psi(r_n) - \Psi(\rho_n(s))} \leq
    \sup_{r \in I_{n,s}} \abs{\Psi'(r)} \cdot
    \abs{r_n - \rho_n(s)}.
\]
Now, $\Psi'(r) = P'(r) - n/r - B'(r)/(2 B(r))$. For $n \gg_P 1$ we have $r_n, \rho_n(s)  \sim n^{1/q}$, and so we deduce that for $r \in I_{n,s}$, we have
\[
    \abs{P'(r)} \ll_P n^{(q-1)/q}, \quad
    \abs{n/r} \ll_P n^{(q-1)/q}, \quad
    \abs{\frac{B'(r)}{B(r)}} \ll_P n^{-1/q},
\]
as long as $n \gg_P 1$. The derivative $\Psi'(r)$ is thus bounded by $O_P(1) n^{(q-1)/q}$ for $r \in I_{n,s}$. Lemma~\ref{lemma:rn_vs_rhon} implies that
\[
    \abs{\Psi(r_n) - \Psi(\rho_n(s))} \ll_P
    (\epsilon/2)^{-s} n^{-s/q}.
\]
Finally, since $\abs{e^x - 1} \leq 2 \abs{x}$ for $\abs{x} \leq 1$, it follows that
\[
    \abs{\exp \left(  \Psi(r_n) - \Psi(\rho_n(s)) \right) - 1} \ll_P
    (\epsilon/2)^{-s} n^{-s/q},
\]
as required.
\end{proof}

\begin{lemma} \label{lemma:Sn_rn_vs_rhon}
For all $s \geq 1$ and $n \gg_P 1$,
\[
    \abs{\mathcal S_N(r_n) - \mathcal S_N(\rho_n(s))} \ll_P (O(1)N)^{O(1)N} (\epsilon/2)^{-s} n^{-s/q}.
\]
\end{lemma}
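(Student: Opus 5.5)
We estimate the difference $\mathcal S_N(r_n) - \mathcal S_N(\rho_n(s)) = \sum_{k=1}^N \bigl( P_k(r_n)/B(r_n)^{3k} - P_k(\rho_n(s))/B(\rho_n(s))^{3k} \bigr)$ term by term, using the mean value theorem on each summand together with Lemma~\ref{lemma:rn_vs_rhon}, exactly as in the preceding lemma for $\mathcal M$.

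\emph{Step 1: structure of $P_k$.} Recall from \cite[Proposition 1]{muller1997finite} that $P_k(r)$ is a polynomial in $r$, built from the $C_j(r)B(r)$ for $j \le 2k$. Its degree is at most $O(1)\,k\,q$ and its coefficients are bounded by $(O_P(1)k)^{O(1)k}$. These bounds come from the explicit Harris--Schoenfeld formula for $F_k$: it is a sum over at most $(O(1)k)^{O(1)k}$ partitions-type terms, with Gamma-function factors like $\Gamma(j+1/2)$, $j\le 3k$. The choice $D_n=1$, $E_n\ll_P 1$ in condition (D) gives the needed bounds on the $C_j$. Moreover, $P_k(r)/B(r)^{3k}$ is $O\bigl((O_P(1)k)^{O(1)k} r^{-qk}\bigr)$ for $r\sim n^{1/q}$. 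This is the same cancellation that makes $F_k(n)/B(r_n)^k$ small, since $B(r)\asymp_P r^q$ and $C_j(r)\ll_P 1$.

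\emph{Step 2: derivative bound.} Let $g_k(r)=P_k(r)/B(r)^{3k}$. Then
\[
g_k'(r)=\frac{P_k'(r)}{B(r)^{3k}}-3k\,\frac{B'(r)}{B(r)}\,g_k(r).
\]
We already know $|B'(r)/B(r)|\ll_P n^{-1/q}$ on the interval $I_{n,s}$ between $r_n$ and $\rho_n(s)$. For the first term we use the Markov inequality for polynomials, or simply Cauchy estimates on a disc of radius $\asymp r$ around $r$, where $g_k$ is analytic because $B$ has no zeros there for $n\gg_P 1$. This gives $|g_k'(r)|\le (O_P(1)k)^{O(1)k}\, r^{-1}$ on $I_{n,s}$, and the bound is uniform because $r_n,\rho_n(s)\sim n^{1/q}$ once $n\gg_P 1$.

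\emph{Step 3: conclusion.} By the mean value theorem and Lemma~\ref{lemma:rn_vs_rhon},
\[
|g_k(r_n)-g_k(\rho_n(s))|\ll_P (O_P(1)k)^{O(1)k}\, n^{-1/q}\,(\epsilon/2)^{-s}n^{-(q+s-1)/q},
\]
which is at most $(O_P(1)k)^{O(1)k}(\epsilon/2)^{-s}n^{-s/q}$. Summing over $1\le k\le N$ costs a factor $N$, which is absorbed into $(O(1)N)^{O(1)N}$. The $O_P(1)^{O(1)N}$ factor arising from $(O_P(1)k)^{O(1)k}$ is absorbed into the implied constant $\ll_P$; for fixed $P$ the stated form is slightly lossy here, and one may need to read $\ll_P$ as allowing $O_P(1)^N$. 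This mirrors how $L_N$ was recorded in the preceding proposition.

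The main obstacle is Step~1: extracting from \cite{harris1968asymptotic,muller1997finite} an explicit $(O_P(1)k)^{O(1)k}$ bound on the coefficients of $P_k$ and on $g_k$ near $r\sim n^{1/q}$. The Cauchy-estimate route in Step~2 sidesteps the degree question. It only needs a uniform bound $|g_k(z)|\le (O_P(1)k)^{O(1)k}|z|^{-qk}$ on a complex annulus. I would try to obtain that bound by bounding each $C_j(z)$ via Cauchy estimates for $A(z)=P'(z)$, namely $|C_j(z)|\ll_P 1$ uniformly, and then counting the terms in the combinatorial formula for $F_k$.
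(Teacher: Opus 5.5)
Your proposal is correct and is essentially the paper's argument. You bound the derivative of the summands $P_k/B^{3k}$ of $\mathcal S_N$ on the interval between $r_n$ and $\rho_n(s)$ using the explicit structure of $P_k$ (coefficients $(O_P(1)k)^{O(1)k}$ and $P_k/B^{3k}\ll r^{-kq}$), then apply the mean value theorem together with Lemma~\ref{lemma:rn_vs_rhon}. Getting the derivative bound from Cauchy or Markov estimates, rather than differentiating the formula for $P_k$ directly, is only a cosmetic change.

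Your closing caveat is unnecessary. Since $C^k\le e^{C/e}k^k$ for all $k\ge 1$, we have $(O_P(1)k)^{O(1)k}\ll_P (O(1)k)^{O(1)k}$, so the bound holds exactly as stated; the paper uses this absorption tacitly.
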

\begin{proof}
By definition, $\mathcal S_N(r)-1$ is a sum of $N$ terms of the form $P_k(r)/B(r)^{3k}$ for $1 \leq k \leq N$. Following \cite[Proposition 1]{muller1997finite}, each $P_k(r)$ is a polynomial in $r$, given as a sum by
\[
    P_k(r) = \frac{(-1)^k}{2^{2k-1}} \sum_{i = 1}^{2k} (-1/4)^i A_{k,i} B(r)^{2k-i} V_{k,i}(r).
\]
Here, $A_{k,i}$ are constants depending only on $k$ and $i$ that can be upper bounded by $(O(1)k)^{O(1)k}$. The summand $V_{k, i}(r)$ is a polynomial in $r$ given as the sum
\[
V_{k,i}(r) = \sum_{\substack{j_1 + \cdots + j_i = 2k \\ j_1, \dots, j_i \geq 1}} W_{j_1}(r) \cdots W_{j_i}(r) ,
\]
where $W_j(r)$ are polynomials in $r$ of degree at most $q$ with coefficients bounded in absolute value by $O_P(1)$. As in the previous lemma, let $I_{n,s}$ be the interval between $r_n$ and $\rho_n(s)$. Thus, for $n \gg_P 1$ and $r \in I_{n,s}$, we have
\[
\abs{W_j(r)} \ll_P n, \quad
\abs{V_{k,i}(r)} \ll_P 
O_P(1)^k \binom{2k-1}{i - 1} n^{2k} \ll_P 
O_P(1)^k n^{2k}.
\]
Since $B(r) \sim_P r^q$ and $B'(r) \ll_P r^{q-1}$ for $r \in I_{n,s}$, it follows that
\begin{equation}
    \label{eq:Pk_bounds}
    \abs{P_k(r)} \ll_P (O(1)k)^{O(1)k} r^{2k q},
    \quad
    \abs{P'_k(r)} \ll_P (O(1)k)^{O(1)k} r^{2k q - 1}.
\end{equation}
Now we can bound the derivative of $\mathcal S_N(r)$ for $r \in I_{n,s}$ as
\[
    \abs{\mathcal S'_N(r)} =
    \abs{\sum_{k = 1}^N \left( 
        \frac{P'_k(r)}{B(r)^{3k}} - k \frac{P_k(r) B'(r)}{B(r)^{3k+1}}
    \right)}
    \ll \sum_{k=1}^N (O(1)k)^{O(1)k} r^{-kq - 1}.
\]
As long as $n \gg_P 1$, this implies that $\abs{\mathcal S'_N(r)}$ is at most $(O(1)N)^{O(1)N} r^{-q - 1}$ for $r \in I_{n,s}$. It now follows from the intermediate value theorem and \Cref{lemma:rn_vs_rhon} that
\[
    \abs{\mathcal S_N(r_n) - \mathcal S_N(\rho_n(s))} 
    \ll_P
    (O(1)N)^{O(1)N} r_n^{-q - 1} \cdot 
    (\epsilon/2)^{-s} n^{-(q+s-1)/q}.
\]
The latter is $(O(1)N)^{O(1)N} O(1)^s n^{-s/q}$ for $n \gg_P 1$, completing the proof.
\end{proof}

\begin{proof}[Proof of Theorem~\ref{quantitative_muller_rhon}]
Bound the difference between the error term at $r_n$ and $\rho_n(s)$ as
\[
\abs{E_{n,N,s} - E_{n,N}} \leq
\abs{\frac{\alpha_n}{\mathcal M(r_n)}} \cdot \abs{\frac{\mathcal M(r_n)}{\mathcal M(\rho_n(s))} - 1} +
\abs{\mathcal S_N(r_n) - \mathcal S_N(\rho_n(s))}.
\]
The stated bound for $E_{n,N,s}$ now follows from the three previous lemmas combined with the bound for $E_{n,N}$ in \Cref{thm:quantitative_muller_rn}.
\end{proof}

We now select a specific value $N = \lceil s/q \rceil - 1$ in the previous theorem to balance the two error terms. With this choice, we have $n^{-N-1} \leq n^{-s/q}$. Consequently, \Cref{quantitative_muller_rhon} yields the following.

\begin{corollary}
    \label{cor:quantitative_muller_rhon}
    For all $s \geq 1$ and $n \gg_P s^{O_P(1)}$,
    \[
    \alpha_n = \mathcal M(\rho_n(s)) \left( \mathcal S_{\lceil s/q \rceil - 1}(\rho_n(s)) + E_{n,s} \right), \quad
    \abs{E_{n,s}} \leq (O_P(1) s)^{O(1) s} n^{-s/q}.
    \]
\end{corollary}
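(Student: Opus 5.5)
The corollary should follow from \Cref{quantitative_muller_rhon} by specialising $N = \lceil s/q \rceil - 1$ and checking that both error terms there, and the range of validity, are of the claimed shape.

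First, the range of $n$. \Cref{quantitative_muller_rhon} requires $n \gg_P N^{O_P(1)}$. Since $N \le s/q \le s$, this is implied by $n \gg_P s^{O_P(1)}$. The side conditions $n \gg_P 1$ from the three auxiliary lemmas are also covered, since $s \ge 1$. When $s \le q$ we have $N = 0$; this case is allowed, since the theorem holds for all $N \ge 0$, with $\mathcal S_0 = 1$.

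Next, the first error term $(O_P(1)N)^{O(1)N} n^{-N-1}$. Because $N+1 = \lceil s/q\rceil \ge s/q$ and $n \ge 1$, we have $n^{-N-1} \le n^{-s/q}$. The prefactor is bounded using $N \le s$:
\[
(O_P(1)N)^{O(1)N} \le (O_P(1)s)^{O(1)s},
\]
where for $N=0$ it is read as $O_P(1)$.

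Then the second term $(O(1)N)^{O(1)N}\, O_P(1)^s\, n^{-s/q}$. Again $(O(1)N)^{O(1)N} \le (O(1)s)^{O(1)s}$, and $O_P(1)^s \le (O_P(1)s)^{s}$. So the product is at most $(O_P(1)s)^{O(1)s} n^{-s/q}$. Adding the two bounds gives $\abs{E_{n,s}} \le (O_P(1)s)^{O(1)s} n^{-s/q}$, after absorbing the factor $2$ and the implied constant of $\ll$ into the $O_P(1)$ inside the base.

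The only point needing care is bookkeeping. The outer exponent constant must stay absolute, $O(1)$, with all $P$-dependence confined to the base. This holds because \Cref{quantitative_muller_rhon} already has this form and our substitutions only replace $N$ by the larger quantity $s$ inside increasing functions. For $N=0$, the expression $(cN)^{cN}$ is interpreted as $1$, so those cases contribute only $O_P(1)$ factors. I expect no genuine obstacle here; the substance lies in the preceding theorem.
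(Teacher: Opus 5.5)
Your proposal is correct and is the paper's argument: the paper likewise specialises \Cref{quantitative_muller_rhon} to $N=\lceil s/q\rceil-1$, uses $n^{-N-1}\le n^{-s/q}$, and absorbs the prefactors via $N\le s$. Your extra bookkeeping makes explicit what the paper leaves implicit, namely the range of $n$, the case $N=0$, and keeping the exponent constant absolute.
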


\subsection{Quantitative asymptotics in terms of $n$}

We are now ready to perform the final step of converting the asymptotic expansion from the previous section into an asymptotic expansion in terms of $n$ alone. Following \cite[formula (26)]{muller1997finite}, for an explicit constant $K(P)$ depending only on the polynomial $P$, let
\[
    \Pref(u) = \frac{K(P)}{2 \sqrt{\pi}} u^{n + q/2} \exp(P(u^{-1}))
\]
be the prefactor in the asymptotic formula for $\alpha_n$ from \cite{muller1997finite} with $u = n^{-1/q}$. Write $\Pref_n = \Pref(n^{-1/q})$. 

\begin{lemma}\label{lemma:XYhol}
Let $u$ be a complex variable, and let
\[
  \rho(u) = u^{-1}(1+G_s(u))^{-1}, \quad
  X_s(u) = \frac{\mathcal M(\rho(u))}{\Pref(u)}, \quad
  Y_s(u) = \mathcal S_{\lceil s/q\rceil-1}(\rho(u)).
\]
There exist $\epsilon > 0$ and $M > 0$ depending only on $P$ such that for every $s \geq 1$:
\begin{enumerate}
\item $X_s$ is holomorphic on $0<|u|<\epsilon$ with
\[ \sup_{0<|u|<\epsilon}|X_s(u)| \leq M ;\]
\item $Y_s$ is holomorphic on $0<|u|<\epsilon$ with
\[ \sup_{0<|u|<\epsilon}|Y_s(u)| \leq (Ms)^{Ms} ;\]
\item Both $X_s$ and $Y_s$ extend holomorphically to $|u|<\epsilon$ with $X_s(0) = Y_s(0) = 1$.
\end{enumerate}
\end{lemma}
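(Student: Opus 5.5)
The plan is to make everything a function of the single complex variable $u$. Following Müller's normalisation behind \cite[formula (26)]{muller1997finite}, I set $n=u^{-q}$, so that $u^{n}\rho(u)^{-n}=(1+G_s(u))^{n}=\exp\bigl(u^{-q}\log(1+G_s(u))\bigr)$. Every quantity then becomes an explicit expression in $u$, $G_s(u)$ and $P$. The main work is to show that all negative powers of $u$ cancel in the exponent of $X_s$, uniformly in $s$.

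\emph{Step 1: uniform control of $G_s$.} By the Cauchy estimate $\abs{\gamma_i}\le O_P(1)\epsilon_0^{-i}$, where $\epsilon_0$ is the radius of analyticity of $G$, I choose $\epsilon<\epsilon_0/2$ small in terms of $P$. Then $\abs{G_s(u)}\le O_P(1)\abs{u}\le 1/2$ and $\abs{G(u)-G_s(u)}\le O_P(1)(2\abs u/\epsilon_0)^{q+s}$ on $\abs u<\epsilon$, both uniformly in $s$. Consequently $1/\rho(u)=u(1+G_s(u))$ is holomorphic with a simple zero at $0$. Also $u^{q}B(\rho(u))$ is holomorphic: $B$ is a polynomial of degree $q$ with leading coefficient $q/2$, so this function equals a polynomial in $u(1+G_s)$ times $(1+G_s)^{-q}$. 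Its value at $0$ is $q/2\neq 0$. Shrinking $\epsilon$, I get $\abs{u^qB(\rho(u))-q/2}\le q/4$, again uniformly in $s$.

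\emph{Step 2: $X_s$.} Up to the constant $K(P)$ I write
\[
X_s(u)=\frac{1}{K(P)}\bigl(u^{q}B(\rho(u))\bigr)^{-1/2}\exp\bigl(E_s(u)\bigr),\qquad
E_s(u)=P(\rho(u))-P(u^{-1})+u^{-q}\log(1+G_s(u)).
\]
The square-root factor is holomorphic and bounded by Step 1. Expanding $P(\rho(u))=\sum_i c_iu^{-i}(1+G_s(u))^{-i}$ shows that the principal part of $E_s$ at $u=0$ involves only $\gamma_1,\dots,\gamma_q$. Since $G_s$ keeps $q+s-1\ge q$ terms, these coefficients agree with those of $G$. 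Hence $E_s$ has the same principal part as the untruncated $E_\infty$ (defined with $G$ in place of $G_s$).

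For $E_\infty$, the principal part vanishes and the constant term equals $\log K(P)+\log\sqrt{q/2}$ (the latter matching the square-root prefactor at $u=0$). This is exactly Müller's derivation of (26) from the saddle-point equation $\rho P'(\rho)=n+1$, and I will import it. Then
\[
E_s-E_\infty=\bigl[P(\rho_s)-P(\rho_\infty)\bigr]+u^{-q}\bigl[\log(1+G_s)-\log(1+G)\bigr].
\]
Using Step 1, this difference is holomorphic and bounded by $O_P(1)\abs u^{s}$; in the first bracket the factor $\abs{\rho}^{q-1}$ is compensated by the $\abs u^{q+s}$ error. Therefore $E_s$ is holomorphic on $\abs u<\epsilon$ with an $s$-independent bound, which gives the bound $M$. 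Moreover $E_s(0)=E_\infty(0)$, which yields $X_s(0)=1$.

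\emph{Step 3: $Y_s$.} Put $N=\lceil s/q\rceil-1$. Each summand $P_k(\rho)/B(\rho)^{3k}$ equals $\bigl(u^{2kq}P_k(\rho(u))\bigr)\cdot u^{kq}\cdot\bigl(u^qB(\rho(u))\bigr)^{-3k}$. The first factor is holomorphic because $\deg P_k\le 2kq$, as in \cref{eq:Pk_bounds}. The structure of $P_k$ through $A_{k,i}$, $V_{k,i}$ and $W_j$ bounds its coefficients by $(O(1)k)^{O(1)k}O_P(1)^k$, so by Step 1 the summand is holomorphic, vanishes at $u=0$, and is bounded by $(O_P(1)k)^{O(1)k}$. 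Summing over $k\le N\le s$ gives $\sup\abs{Y_s}\le (Ms)^{Ms}$ and $Y_s(0)=1$. Removability of the singularity at $0$ in both cases follows from boundedness near $0$.

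The main obstacle is Step 2: checking, uniformly in $s$, that the exponent has no principal part. My first attempt is the reduction above, which compares with the untruncated $G$ and relies on the fact that $G_s$ matches $G$ to order $q+s-1\ge q$. If importing Müller's cancellation for $E_\infty$ proves delicate, I would verify it directly. The saddle equation gives $E_\infty'(u)$ in closed form, and one checks that it has no pole of order $\ge 2$ at $u=0$, which forces the principal part of $E_\infty$ to vanish.
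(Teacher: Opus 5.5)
Your proposal is correct, but for $X_s$ it takes a different route from the paper.

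\textbf{The paper's route.} The paper bounds $\log X_s$ directly. It quotes from M\"uller (\S3.6) that $\gamma_i=0$ for $i<q/2$, so $G_s(u)=O_P(u^{q/2})$ uniformly in $s$. Since $c_i=0$ for $i\nmid q$, every term of $P(\rho(u))-P(u^{-1})$ with $i<q$ has $i\le q/2$ and is therefore $O_P(u^{q/2-i})$. For $i=q$, the linear terms of $c_q u^{-q}\bigl((1+G_s)^{-q}-1\bigr)$ and $u^{-q}\log(1+G_s)$ cancel because $c_q=1/q$, leaving $O_P(u^{-q}G_s^2)=O_P(1)$. So the absence of a principal part is proved by hand. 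It uses only the vanishing order of $G$, never the saddle-point identity, and only the value $X_s(0)=1$ is cited.

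\textbf{Your route.} You write $E_s=E_\infty+(E_s-E_\infty)$, import the cancellation for the untruncated exponent, and control the truncation error. This makes it transparent why $X_s(0)$ is independent of $s$: it equals $X_\infty(0)$, which is the normalisation of $K(P)$. The import is legitimate. $E_\infty$ is meromorphic at $0$ with pole order at most $q$, and Harris--Schoenfeld together with M\"uller's Theorem~2 give $\mathcal M(r_n)/\Pref_n\to 1$ along $u=n^{-1/q}\to 0^+$. That forces the principal part to vanish and fixes the constant term. The price is that the key cancellation is outsourced rather than proved.

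\textbf{Caveats.}
\begin{itemize}
\item Your fallback check of $E_\infty'$ cannot succeed without the hypothesis $c_i=0$ for $i\nmid q$. For instance, for $P(z)=z^3/3+bz^2+z$ one finds $E_\infty=-\tfrac{2b^2}{3}u^{-1}+O(1)$. So your ``one checks'' conceals exactly the structural input that the paper uses explicitly.
\item Your truncation estimate should read $|E_s-E_\infty|\le O_P(1)(2|u|/\epsilon_0)^{s}$ rather than $O_P(1)|u|^{s}$. This is still uniformly bounded on $|u|<\epsilon<\epsilon_0/2$ and tends to $0$, so nothing breaks.
\end{itemize}

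Your treatment of $Y_s$ is the same as the paper's. It has the small improvement that bounding the coefficients of $P_k$ justifies the estimate for complex $u$, where the paper reuses a bound derived for real $r$.
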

\begin{proof}
    Taking logarithms, we obtain
    \[
        \big|\log X_s(u)\big| \ll 
        \big| P(\rho(u)) - P(u^{-1}) - n \big(\log\rho(u) + \log u\big) \big| +
        \big| \log B(\rho(u)) + q \log u \big|.
    \]
    Since $\log\rho(u) = -\log u - \log(1+G_s(u))$ and $G_s$ is analytic on $\{|u|<\epsilon\}$ with $G_s(0)=0$, the product $\rho(u)u$ is bounded by $O_P(1)$ on that disc. As $B$ is a polynomial of degree $q$ with leading coefficient $q/2$, the quotient $B(\rho(u))/u^{-q}$ is bounded by $O_P(1)$ (from above and below) as well. Hence $\log B(\rho(u)) = - q\log u + O_P(1)$ and the second summand is bounded by $O_P(1)$ for $|u|<\epsilon$.

    It remains to bound the first summand. By \cite[Section 3.6]{muller1997finite} the coefficients $\gamma_i$ of $G_s(u)$ vanish for $i < q/2$, and hence we may write $G_s(u)=u^{q/2}\Delta_s(u)$ with $\Delta_s$ analytic on $\{|u|<\epsilon\}$ and bounded by $O_P(1)$ there. Expanding,
    \begin{equation} \label{eq:Prhou_minus_Puinv}
        P(\rho(u)) - P(u^{-1}) =
        \sum_{i=1}^q c_i u^{-i}\big[(1+G_s(u))^{-i}-1\big].
    \end{equation}
    Since $G_s(u)=O_P(u^{q/2})$ on the disc, we obtain, by Taylor expansion,
    \[
        (1+G_s(u))^{-i}-1 = - i G_s(u) + O_P(G_s(u)^2) = O_P(u^{q/2}).
    \]
    For $i<q$ any nonzero contribution in \cref{eq:Prhou_minus_Puinv} can occur only when $i\le q/2$. Then
    \[
        u^{-i}O_P(u^{q/2}) = O_P\big(u^{q/2-i}\big),
    \]
    which is uniformly bounded on $\{|u|<\epsilon\}$ because the exponent is non-negative. For the top degree $i=q$, we must treat the terms more carefully. Using $c_q=1/q$ and $u^q=n^{-1}$, the relevant part reduces to
    \[
        c_q n\big((1+G_s(u))^{-q}-1\big) + n\log(1+G_s(u)).
    \]
    Expanding the power as before and the logarithm for small $G_s(u)$ (shrinking $\epsilon$ if necessary so that $|G_s(u)| \ll 1/q$), we get
    \[
        c_q n\big(-qG_s(u) + O_P(G_s(u)^2)\big) + n\big(G_s(u) + O_P(G_s(u)^2)\big).
    \]
    The linear terms cancel, because $c_q=1/q$, leaving a remainder of order $nG_s(u)^2$. Since $G_s(u)=O_P(u^{q/2})$, it follows that $nG_s(u)^2=O_P(nu^q)=O_P(1)$.

    Combining the bounds above shows that $\log X_s(u)$ is bounded by $O_P(1)$ on $0<|u|<\epsilon$, so each $X_s$ is holomorphic and bounded there. Therefore $X_s$ extends holomorphically across $0$ by the Riemann removable singularity theorem with the value $X_s(0)=1$, as asserted in \cite[Section 3.6]{muller1997finite}.

    The argument for $Y_s$ is simpler. By definition, $Y_s(u)$ is a sum of terms of the form $P_k(\rho(u))/B(\rho(u))^{3k}$ for $k \leq \lceil s/q \rceil - 1$. On the disc $|u|<\epsilon$, the functions $\rho(u)u$ and $B(\rho(u))/u^{-q}$ are bounded by $O_P(1)$, as shown above. Each $P_k(\rho(u))$ is bounded by $(O(1)k)^{O(1)k} \rho(u)^{2k q}$ by \cref{eq:Pk_bounds}. Thus each summand is bounded by
    \[
        \abs{\frac{P_k(\rho(u))}{B(\rho(u))^{3k}}} \ll_P
        (O(1)k)^{O(1)k} u^{k q}
    \]
    on the disc of radius $\epsilon$. Summing over $k$ yields the bound $(O_P(1)s)^{O(1)s}$. As every nonconstant summand vanishes in $0$, the extended value at $0$ is $Y_s(0)=1$. 
\end{proof}

\begin{theorem}[quantitative form of \cite{muller1997finite}, Theorem 2]
    \label{thm:quantitative_muller_final}
    For all $s \geq 1$ and $n \gg_P s^{O_P(1)}$,
    \[
    \frac{\alpha_n}{\Pref_n} = 1 + \sum_{\nu = 1}^{s-1} \mathcal C_\nu n^{-\nu/q} + R_{n,s},
    \]
    where the coefficients $\mathcal C_\nu$ are defined as in \cite[formula (25)--(27)]{muller1997finite}, and the remainder term satisfies
    \[
    \abs{R_{n,s}} \leq O_P(s)^{O_P(s)} n^{-s/q}.
    \]
\end{theorem}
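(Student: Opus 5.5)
We will derive the statement from \Cref{cor:quantitative_muller_rhon} together with \Cref{lemma:XYhol}, by interpreting $\alpha_n/\Pref_n$ as the value at $u=n^{-1/q}$ of a holomorphic function on the disc $|u|<\epsilon$ and Taylor expanding it there with Cauchy estimates.

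\textbf{Step 1: divide by the prefactor.} Fix $s\ge1$ and put $u=n^{-1/q}$, so that $\rho_n(s)=\rho(u)$ in the notation of \Cref{lemma:XYhol}. Dividing the identity of \Cref{cor:quantitative_muller_rhon} by $\Pref_n$ gives
\[
\frac{\alpha_n}{\Pref_n}=X_s(u)Y_s(u)+X_s(u)E_{n,s}.
\]
By part (1) of \Cref{lemma:XYhol}, $|X_s(u)|\le M$. Hence the second term is at most $M(O_P(1)s)^{O(1)s}n^{-s/q}$, and it can be absorbed into $R_{n,s}$.

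\textbf{Step 2: Taylor expand $H_s=X_sY_s$.} By part (3) of \Cref{lemma:XYhol}, $H_s$ is holomorphic on $|u|<\epsilon$ with $H_s(0)=1$, and by parts (1) and (2) it satisfies $\sup|H_s|\le M(Ms)^{Ms}$. Write $H_s(u)=\sum_\nu h_\nu^{(s)}u^\nu$. The Cauchy estimate on the circle $|u|=\epsilon/2$ bounds the tail: for $|u|\le\epsilon/4$, which holds once $n\gg_P1$,
\[
\Bigl|\sum_{\nu\ge s}h^{(s)}_\nu u^\nu\Bigr|\le M(Ms)^{Ms}\sum_{\nu\ge s}(2|u|/\epsilon)^\nu\le 2M(Ms)^{Ms}(2/\epsilon)^s n^{-s/q}.
\]
This is $O_P(s)^{O_P(s)}n^{-s/q}$, as required.

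\textbf{Step 3: the coefficients do not depend on $s$.} This is the main obstacle, since the statement uses Müller's coefficients $\mathcal C_\nu$, which are independent of $s$. We need $h^{(s)}_\nu=\mathcal C_\nu$ for all $\nu<s$. The plan is to compare two expansions.
\begin{itemize}
\item For $s'\ge s$, applying Steps 1 and 2 with $s$ and with $s'$ gives two polynomials in $u$ of degree below $s$. Both approximate $\alpha_n/\Pref_n$ to within $O_s(n^{-s/q})$ along the sequence $u=n^{-1/q}\to0$.
\item Their difference is therefore a polynomial in $u$ of degree less than $s$ that is $o(u^{s-1})$ along this sequence. Such a polynomial must vanish identically, so $h^{(s)}_\nu=h^{(s')}_\nu$ for $\nu<s$, and these common values define a sequence $\mathcal C_\nu$.
\item To match these with Müller's formulas (25)--(27), argue directly. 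Truncating $G$ to $G_s$ only changes $\rho(u)$ at order $u^{q+s-1}$ relative to $u^{-1}$. Adding further terms to $\mathcal S$ only introduces summands $P_k/B^{3k}=O(u^{kq})$ with $kq\ge s$. Hence the first $s$ Taylor coefficients of $X_sY_s$ agree with those of the formal product Müller expands, which are exactly the $\mathcal C_\nu$.
\end{itemize}
Here we use that $\rho(u)u$ is analytic and $B(\rho(u))u^{q}$ is bounded away from zero, both from \Cref{lemma:XYhol}. This makes the change in $X_s$ and $Y_s$ at orders below $s$ vanish. Finally, the condition $n\gg_P s^{O_P(1)}$ is inherited from \Cref{cor:quantitative_muller_rhon}.
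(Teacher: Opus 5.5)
Your proposal is correct and follows the paper's proof essentially step for step. Like the paper, you write $\alpha_n/\Pref_n = X_sY_s + X_sE_{n,s}$ via \Cref{cor:quantitative_muller_rhon}, Taylor-expand the bounded holomorphic function $X_sY_s$ from \Cref{lemma:XYhol} with Cauchy estimates, and invoke uniqueness of asymptotic expansions along $u=n^{-1/q}\to 0$ to make the coefficients independent of $s$. Your direct matching with Müller's formal expansion just spells out the paper's one-line remark that these coefficients equal the $\mathcal C_\nu$; that remark also follows immediately from uniqueness combined with Müller's non-quantitative Theorem~2.
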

\begin{proof}
Let $Z_s(u) = X_s(u) Y_s(u)$. By the previous lemma, $Z_s$ is holomorphic on $\{ |u|<\epsilon \}$ and bounded by $(O_P(1)s)^{O(1)s}$ there. Thus $Z_s$ has a Taylor expansion
\[
    Z_s(u) = \sum_{\nu = 0}^\infty c_{s,\nu} u^\nu
\]
on the disc $\{ \abs{u} < \epsilon \}$ with $c_{s,0} = 1$. Moreover, for $\abs{u} < \epsilon/2$,
\[
    \abs{Z_s(u) - \sum_{\nu = 0}^{s-1} c_{s,\nu} u^\nu}
    \leq
    \frac{\sup_{0 < |u| < \epsilon} |Z_s(u)|}{1 - |u|/\epsilon} (|u|/\epsilon)^s
    \leq
    (O_P(1)s)^{O(1)s} |u|^s .
\]
Set $u = n^{-1/q}$, and let $n \gg_P s^{O_P(1)}$. By Corollary~\ref{cor:quantitative_muller_rhon}, we have
\[
    \frac{\alpha_n}{\Pref_n} =
    Z_s(n^{-1/q}) + X_s(n^{-1/q}) E_{n,s}.
\]
Since $\abs{E_{n,s}} \le (O_P(1)s)^{O(1)s} n^{-s/q}$, the error term $E_{n,s}$ and the tail of the Taylor expansion of $Z_s(n^{-1/q})$ are of the same order, while $X_s$ is bounded for $n \gg_P 1$. Hence
\[
    \abs{\frac{\alpha_n}{\Pref_n} - \sum_{\nu = 0}^{s-1} c_{s,\nu} n^{-\nu/q}}
    \ll (O_P(1) s)^{O(1) s} n^{-s/q}. 
\]
Since this expansion holds for every $s$ with error $O_{P,s}(n^{-s/q})$, the coefficients $c_{s,\nu}$ must in fact be independent of $s$, and they must equal the coefficients $\mathcal C_\nu$ from \cite[formula (25)--(27)]{muller1997finite}.
\end{proof}

\section{Fractional expansions for the number of group actions}
\label{sec:fractional_expansions}

Let $(\beta_n)_{n \geq a}$ be a sequence of real numbers. We say that $(\beta_n)_{n \geq a}$ admits a \emph{fractional expansion with parameters} $q,A,B,C,D,E \geq 1$, in symbols $\FE_q(A,B,C,D,E)$, if there exist real coefficients $(b_k)_{k \geq 0}$ with zeroth coefficient $b_0 = 1$ such that, for all $s \geq 1$ and for all $n \geq A s^B$, we have
\[
    \abs{\beta_n - \sum_{k=0}^{s-1} b_k n^{-k/q}} 
    \leq
    C (Ds)^{Es} n^{-s/q}.
\]
Note that $A \ge a$ is required for this definition to make sense. 

A crucial consequence of \Cref{thm:quantitative_muller_final} is that the sequence of normalized coefficients $(\alpha_n/\Pref_n)_{n \geq 0}$ admits $\FE_q$ with all parameters $O_P(1)$. We will now establish a series of lemmas showing that the class of sequences admitting $\FE_q$ is closed under various operations. This will allow us to transform the sequence $(\alpha_n/\Pref_n)_{n \geq 0}$ into the sequence required for verifying \Cref{requirement:asymptotic_expansion}.

\subsection{Operations with asymptotic expansions}

\begin{lemma}[coefficients bound]
\label{lemma:coeffBound}
Let the sequence $(\beta_n)_{n \geq a}$ admit $\FE_q(A,B,C,D,E)$ with coefficients $(b_k)_{k \geq 0}$. Then, for all $k \geq 1$, the coefficients satisfy
\[
\abs{b_k} \leq 2C (2 D k)^{2E k}.
\]
\end{lemma}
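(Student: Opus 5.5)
The idea is to isolate $b_k$ by comparing the expansions at orders $s=k$ and $s=k+1$ at a single well-chosen $n$, and then to optimize over that $n$.

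Fix $k \geq 1$. Apply the definition of $\FE_q(A,B,C,D,E)$ with $s = k$ and with $s = k+1$, for any $n \geq A(k+1)^B$, which makes both valid. Subtracting the two approximations, only the term $b_k n^{-k/q}$ survives in the difference of the truncated sums. The triangle inequality then gives
\[
\abs{b_k} n^{-k/q} \leq C (Dk)^{Ek} n^{-k/q} + C (D(k+1))^{E(k+1)} n^{-(k+1)/q}.
\]
Multiplying by $n^{k/q}$ yields
\[
\abs{b_k} \leq C(Dk)^{Ek} + C (D(k+1))^{E(k+1)} n^{-1/q}.
\]

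Next I choose $n$ so that the second term is at most comparable to $C(2Dk)^{2Ek}$. The only constraint is $n \geq A(k+1)^B$. The cleanest option is to let $n \to \infty$, since the inequality holds for all sufficiently large $n$ and $b_k$ does not depend on $n$. The second term then vanishes and we get
\[
\abs{b_k} \leq C (Dk)^{Ek} \leq 2C(2Dk)^{2Ek},
\]
using $D,E,k \geq 1$.

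A more crude route avoids the limit: take $n = \lceil A(k+1)^B \rceil$ and bound $n^{-1/q} \leq 1$. Then $(D(k+1))^{E(k+1)} \leq (2Dk)^{2Ek}$, because $k+1 \leq 2k$ and $E(k+1) \leq 2Ek$. Summing the two terms gives at most $2C(2Dk)^{2Ek}$, which is exactly the stated bound and presumably explains its shape.

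There is no genuine obstacle. The only care needed is checking the elementary inequality $(D(k+1))^{E(k+1)} \leq (2Dk)^{2Ek}$ for $k \geq 1$, which holds since $D \geq 1$ and $2Dk \geq 1$.
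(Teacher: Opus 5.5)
Your proof is correct and is the paper's argument: apply the expansion at orders $s=k$ and $s=k+1$ at a common $n \ge A(k+1)^B$, subtract, and multiply by $n^{k/q}$; your crude route, using $n^{-1/q}\le 1$ and $(D(k+1))^{E(k+1)} \le (2Dk)^{2Ek}$, reproduces the stated bound exactly. Your alternative of letting $n\to\infty$ is also valid, since the $b_k$ do not depend on $n$, and it gives the slightly sharper bound $\abs{b_k} \le C(Dk)^{Ek}$.
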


\begin{proof}
Take $n \geq A (k+1)^B$. Apply the fractional expansion estimate with $s = k$ and $s = k+1$, and subtract them to obtain
\[
    \abs{b_k} n^{-k/q} 
    \leq
    C (Dk)^{Ek} n^{-k/q} + C (D(k+1))^{E(k+1)} n^{-(k+1)/q}.
\]
Multiplying by $n^{k/q}$ gives $\abs{b_k} \leq 2C (2Dk)^{2Ek}$, as desired.
\end{proof}

\begin{lemma}[closure properties of $\FE$: product]
\label{lemma:clProduct}
Let the sequences $(\beta_n)_{n \geq a_1}$ and $(\gamma_n)_{n \geq a_2}$ admit $\FE_q(A_1,B_1,C_1,D_1,E_1)$ and $\FE_q(A_2,B_2,C_2,D_2,E_2)$, respectively. Then the product sequence $(\beta_n\gamma_n)_{n \geq \max(a_1,a_2)}$ admits
    \[
    \FE_q \left(
    \max(A_1, A_2), \,
    \max(B_1, B_2), \,
    2 C_1 C_2, \,
    2 \max(D_1, D_2), \,
    18 \max(E_1, E_2)
    \right).
    \]
\end{lemma}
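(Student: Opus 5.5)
The plan is to take as the coefficients of the product the Cauchy product $c_k = \sum_{i+j=k} b_i b'_j$, where $(b_k)$ and $(b'_k)$ are the coefficients of $\beta_n$ and $\gamma_n$. Then $c_0 = 1$. Set $A=\max(A_1,A_2)$, $B=\max(B_1,B_2)$, $D=\max(D_1,D_2)$, $E=\max(E_1,E_2)$. For $s\ge1$ and $n \ge A s^B$, both given expansions hold at precision $s$. Write
\[
\beta_n = \beta^{<s}_n + \epsilon_n, \qquad \gamma_n = \gamma^{<s}_n + \epsilon'_n,
\]
where $\beta^{<s}_n=\sum_{k<s} b_k n^{-k/q}$ and $\gamma^{<s}_n$ is defined likewise. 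The errors satisfy $|\epsilon_n| \le C_1(D_1 s)^{E_1 s} n^{-s/q}$, and the analogous bound holds for $\epsilon'_n$.

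Expanding the product gives
\[
\beta_n\gamma_n - \sum_{k<s} c_k n^{-k/q} = \sum_{\substack{i,j<s\\ i+j\ge s}} b_i b'_j n^{-(i+j)/q} + \beta^{<s}_n\epsilon'_n + \gamma^{<s}_n\epsilon_n + \epsilon_n\epsilon'_n .
\]
I will bound the three kinds of terms separately.

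First, the cross terms. I use Lemma~\ref{lemma:coeffBound} to get $|b_i| \le 2C_1(2D_1 i)^{2E_1 i}$ for $i\ge1$, and $|b_0|=1$. This gives
\[
|\beta^{<s}_n| \le \sum_{i<s} |b_i| \le s\cdot 2C_1 (2Ds)^{2Es}.
\]
Here I use $n\ge1$ and $n^{-i/q}\le 1$, crudely. Multiplying by $|\epsilon'_n|$ gives a contribution of order $C_1C_2\, s(2Ds)^{3Es} n^{-s/q}$.

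Second, the error product. For $\epsilon_n\epsilon'_n$ I use $n^{-2s/q}\le n^{-s/q}$, which yields $C_1C_2 (Ds)^{2Es} n^{-s/q}$.

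Third, the tail of the Cauchy product. For pairs with $i+j\ge s$ I bound $n^{-(i+j)/q}\le n^{-s/q}$. There are at most $s^2$ such pairs, and each has $|b_ib'_j| \le 4C_1C_2(2Ds)^{4Es}$.

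The main obstacle is the bookkeeping that absorbs all of this into $2C_1C_2(2Ds)^{18Es}$. The difficulty is that constant factors such as $4$, $s^2$, the number of terms, and the extra summands cannot be swallowed by the leading $2C_1C_2$. They must instead be absorbed by the surplus in the exponent. Since $C_i\ge1$ and $D,E,s\ge1$, every constant and polynomial factor in $s$ is at most $(2Ds)^{cEs}$ for a small $c$. For instance, $s^2\cdot 4 \le (2s)^{4s}$, and the sum of the four contributions is at most $4\max$. Collecting the exponents $4E+4E+\dots$ comfortably stays below $18E$; the generous constant $18$ leaves slack for these crude estimates. One should also double-check that $C_1+C_2\le 2C_1C_2$-type absorptions are legitimate, and they are because $C_i\ge1$.

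Finally, I will note that the product sequence is defined for $n\ge\max(a_1,a_2)$. This is consistent with the requirement $A\ge\max(a_1,a_2)$, which holds since $A_i\ge a_i$.
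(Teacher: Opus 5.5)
Your proposal is correct and follows essentially the same route as the paper. Both arguments use Cauchy-product coefficients, split $\beta_n\gamma_n$ into the product of the truncations, two cross terms and the error product, apply Lemma~\ref{lemma:coeffBound} to bound the truncated sums and the tail $i+j\ge s$, and absorb all constant factors into the surplus in the exponent. Your explicit tally gives a total error of at most $9s^2C_1C_2(2Ds)^{4Es}n^{-s/q}\le 2C_1C_2(2Ds)^{18Es}n^{-s/q}$, which closes the bookkeeping cleanly, if anything more carefully than the paper, which simply adds its two partial bounds.
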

\begin{proof}
Assume that $(\beta_n)_{n \geq a_1}$ and $(\gamma_n)_{n \geq a_2}$ admit a fractional expansion with coefficients $(b_k)_{k \geq 0}$ and $(c_k)_{k \geq 0}$, respectively. Set
\[ A = \max(A_1, A_2), \quad B = \max(B_1, B_2), \quad D = \max(D_1, D_2), \quad E = \max(E_1, E_2). \]
For $n \geq A s^B$, write
\[
\beta_n = M_\beta + E_\beta,
\quad
\gamma_n = M_\gamma + E_\gamma,
\]
where
\[
M_\beta = \sum_{i=0}^{s-1} b_i n^{-i/q}, \quad
M_\gamma = \sum_{i=0}^{s-1} c_i n^{-i/q}, \quad
\abs{E_\beta}, \abs{E_\gamma} \leq \max(C_1, C_2) (D s)^{E s} n^{-s/q}.
\]
By \Cref{lemma:coeffBound}, for every $i \leq s-1$, we have
\[ \abs{b_i}, \, \abs{c_i} \leq 2 \max(C_1, C_2) (2 D s)^{2 E s} .\]
Hence
\[ \abs{M_\beta}, \, \abs{M_\gamma} \leq
2 \max(C_1, C_2) s (2 D s)^{2 E s} .\]
Note that
\[
\beta_n \gamma_n = 
M_\beta M_\gamma
+
E_\beta M_\gamma
+
E_\gamma M_\beta
+
E_\beta E_\gamma.
\]
The sum of the last three terms is at most
\[
    3 \cdot C_1 C_2 \cdot (Ds)^{Es} n^{-s/q} \cdot 2 s (2Ds)^{2Es}
    \leq
    3 C_1 C_2 (2Ds)^{4Es} n^{-s/q}.
\]
For the main term, set
\[d_k = \sum_{i+j=k} b_i c_j .\]
Note that $d_0 = b_0 c_0 = 1$, and
\[ \abs{d_k} \leq (k+1) 4 C_1 C_2 (2Dk)^{4Ek} .\]
Then
\[
    M_\beta M_\gamma =
    \sum_{k=0}^{s-1} d_k n^{-k/q} +
    \sum_{k=s}^{2s-2} d_k n^{-k/q} ,
\]
where the second sum is at most
\[
s \cdot 8 s C_1 C_2 (4Ds)^{8Es} \cdot n^{-s/q} \leq 
2 C_1 C_2 (2Ds)^{18Es} n^{-s/q}.
\]
Thus, $\beta_n \gamma_n$ admits $\FE_q(A, B, 2 C_1 C_2, 2D, 18E)$ with coefficients $(d_k)_{k \geq 0}$.
\end{proof}

\begin{lemma}[closure properties of $\FE$: reciprocal]
\label{lemma:clReciprocal}
Let the sequence $(\beta_n)_{n \geq a}$ admit $\FE_q(A,B,C,D,E)$. Suppose that $\beta_n \neq 0$ for all $n \geq a$. Then the reciprocal sequence $(1/\beta_n)_{n \geq a}$ admits
\[
\FE_q \left(
\max (A,(2CD^E)^q), \,
B, \,
6, \,
8CD, \,
6E
\right).
\]
\end{lemma}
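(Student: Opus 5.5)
The plan is to construct the reciprocal coefficients formally and then control the error by a geometric-series argument. Write $\beta_n = 1 + \delta_n$. Define coefficients $(e_k)_{k\ge 0}$ of the formal inverse of $\sum_k b_k w^k$. That is, set $e_0=1$ and $e_k = -\sum_{i=1}^{k} b_i e_{k-i}$, so that $\sum_{i+j=k} b_i e_j = 0$ for $k\ge 1$.

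Fix $s\ge 1$ and $n\ge \max(A,(2CD^E)^q)\,s^B$. Write $M_\beta=\sum_{k<s} b_k n^{-k/q}$ and $E_\beta=\beta_n-M_\beta$, so that $|E_\beta|\le C(Ds)^{Es}n^{-s/q}$. Also write $M_e=\sum_{k<s} e_k n^{-k/q}$. Then
\[
\frac1{\beta_n}-M_e=\frac{1-\beta_n M_e}{\beta_n}=\frac{1-M_\beta M_e - E_\beta M_e}{\beta_n}.
\]
By construction of $e_k$, the quantity $1-M_\beta M_e$ only contains powers $n^{-k/q}$ with $s\le k\le 2s-2$, namely $-\sum_{k=s}^{2s-2}\big(\sum_{i+j=k,\,i,j<s} b_ie_j\big)n^{-k/q}$. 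So the numerator is $O(n^{-s/q})$ once the coefficients are controlled.

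The key steps, in order, are as follows.

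First, bound $|b_k|$ using \Cref{lemma:coeffBound}: $|b_k|\le 2C(2Dk)^{2Ek}$.

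Second, bound $e_k$ by induction from the recursion. I would aim for a bound of the form $|e_k|\le (8CD k)^{c E k}$ for a small constant $c$. A clean route is to note that $e_k$ is a signed sum over compositions $k=i_1+\dots+i_r$ of products $\prod b_{i_j}$. Each such product is bounded by $(2C)^r\prod (2Di_j)^{2Ei_j}\le (2C)^k(2Dk)^{2Ek}$. The number of compositions is $2^{k-1}$, which gives $|e_k|\le (4C)^k(2Dk)^{2Ek}\le (8CDk)^{2Ek}$, using $C,D,E\ge1$.

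Third, bound $\beta_n$ from below. Take $s=1$ in the expansion: $|\beta_n-1|\le CD^E n^{-1/q}\le 1/2$ as soon as $n\ge (2CD^E)^q$. This is exactly why the new $A$ parameter has that form. Hence $|1/\beta_n|\le 2$.

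Finally, assemble the pieces. The tail term is bounded by $s\cdot s\cdot \max|b_i|\max|e_j|\,n^{-s/q}\le s^2\cdot 2C(2Ds)^{2Es}(8CDs)^{2Es}n^{-s/q}$. The cross term satisfies $|E_\beta M_e|\le C(Ds)^{Es}\cdot s(8CDs)^{2Es}n^{-s/q}$. Multiplying by $2$ and absorbing powers of $C$ and the factors $s^2$ into $(8CDs)^{6Es}$ gives the constant $6$ and the parameters $(8CD,6E)$. The factor $2$ from $1/\beta_n$ together with the two terms produces the leading constant, after allowing a slack factor like $3$.

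The main obstacle is the bookkeeping in the second and final steps. Powers of $C$ must be absorbed into the base $8CD$ rather than into the leading constant, since the leading constant must come out as the absolute number $6$. This works because $C^k\le (Cs)^{Es}$ for $k<s$ and $E\ge1$. The exponents $2E+2E+E\le 6E$ must also line up with the claimed parameters. Also note that $B$ is unchanged, because $n\ge \max(A,\cdot)s^B$ already implies both $n\ge As^B$ and $n\ge (2CD^E)^q$.
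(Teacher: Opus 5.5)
Your proposal is correct and follows the paper's proof. The shared steps are the recursively defined inverse coefficients $e_k$ with $|e_k|\le (8CDk)^{2Ek}$, the lower bound $|\beta_n|\ge 1/2$ from the $s=1$ expansion, and the splitting of $\beta_n M_e-1$ into the truncation tail $M_\beta M_e-1$ plus the cross term $E_\beta M_e$. The constants do come out as $4+2=6$. You obtain the bound on $e_k$ by counting compositions rather than by the paper's induction; both are fine.

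Your handling of the tail is in fact more careful than the paper's. You bound the coefficients of $M_\beta M_e-1$ in degrees $s,\dots,2s-2$, which genuinely yields the factor $n^{-s/q}$. The paper's Step 2 only establishes a bound on $|x^sQ_s(x)|$, yet Step 3 uses it as a bound on $|Q_s(x)|$. The correct estimate is your coefficient bound, equivalently $|Q_s(x)|\le \sum_{i<s}|b_i|\sum_{j<s}|e_j|\le (8CDs)^{6Es}$ for $|x|\le 1$.
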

\begin{proof}
Let $(b_k)_{k \geq 0}$ be the fractional expansion coefficients of $(\beta_n)_{n \geq 0}$.
Applying $\FE_q(A,B,C,D,E)$ with $s=1$ gives, for $n\geq A$,
\[
\abs{\beta_n - 1} \leq C D^E n^{-1/q} .
\]
Hence, if $n \geq (2CD^E)^q$, then $\abs{\beta_n} \geq 1/2$. Recursively define the coefficients $(e_k)_{k \geq 0}$ by $e_0 = 1$ and, for $k\ge 1$,
\[
e_k=-\sum_{j=1}^k b_j e_{k-j} .
\]
For $s \geq 1$, set
\[
B_s(x)=\sum_{k=0}^{s-1} b_k x^k,
\qquad
E_s(x)=\sum_{k=0}^{s-1} e_k x^k.
\]
It follows that $E_s(x)B_s(x)=1+x^s Q_s(x)$ for some polynomial $Q_s$ of degree at most $s-2$. Our aim is now to bound the sizes of these polynomials at $x=n^{-1/q}$. 

\noindent \emph{Step 1: bounds on $e_k$.}
Let $k \geq 1$. By \Cref{lemma:coeffBound} applied to $(\beta_n)$, we have $\abs{b_k} \leq 2C(2Dk)^{2Ek}$. We claim that, for $k\ge 1$,
\[
\abs{e_k} \leq (8CDk)^{2Ek} .
\]
Indeed, for $k=1$, this holds because
\[ 
\abs{e_1} = \abs{b_1} \leq 2C(2D)^{2E} \leq (8CD)^{2E}.
\]
For $k \geq 2$, bound $e_k$ using the recursion as
\[
\abs{e_k} \leq \abs{b_k} \abs{e_0} + \sum_{j=1}^{k-1} \abs{b_j} \abs{e_{k-j}}.
\]
The first term can be bounded by the coefficient bound lemma as
\[ 
\abs{b_k} \leq
2C(2Dk)^{2Ek} \leq
    \frac{(8CDk)^{2Ek}}{2}.
\]
The sum can be bounded using the inductive hypothesis by 
\[
    \sum_{j=1}^{k-1} 2C(2Dj)^{2Ej} \cdot (8CD (k-j))^{2E(k-j)}.
\]
Using $2C \leq (2C)^{2Ej}$ and $8CD(k-j) \leq 8CDk$, this is at most
\[
(8CDk)^{2Ek} \cdot \sum_{j=1}^{k-1} \left(\frac{j}{2k}\right)^{2Ej}
    \leq \frac{(8CDk)^{2Ek}}{2}.
\]
Hence $\abs{e_k} \leq (8CDk)^{2Ek}$, as claimed.

\noindent\emph{Step 2: bounds on $B_s,E_s,Q_s$.}
Let $x=n^{-1/q} \leq 1$. Then
\[
\abs{B_s(x)} \leq 
\sum_{k=0}^{s-1} \abs{b_k}
\leq s \cdot 2C(2Ds)^{2Es}
\le (8CDs)^{3Es},
\]
and similarly
\[
\abs{E_s(x)} \leq \sum_{k=0}^{s-1} \abs{e_k}
\leq 1 + s \cdot (8CDs)^{2Es}
\leq (8CDs)^{3Es}.
\]
Since $E_s(x)B_s(x)=1+x^sQ_s(x)$, we obtain
\[
\abs{x^s Q_s(x)}
\leq \abs{E_s(x)B_s(x)} + 1
\leq 2 (8CDs)^{6Es} .
\]

\noindent\emph{Step 3: approximation of $1/\beta_n$.}
Apply $\FE_q(A,B,C,D,E)$ at order $s$ to get, for $n \geq A s^B$,
\[
\beta_n=B_s(n^{-1/q})+F_s,\qquad \abs{F_s} \leq C(Ds)^{Es}n^{-s/q}.
\]
This yields
\[
\abs{\beta_n E_s(n^{-1/q})-1}
\le \abs{B_s(n^{-1/q})E_s(n^{-1/q})-1} + \abs{F_s} \abs{E_s(n^{-1/q})}.
\]
The first term is at most $n^{-s/q}|Q_s(n^{-1/q})| \leq 2 (8CDs)^{6Es} n^{-s/q}$, while the second term is at most
\[ C(Ds)^{Es} n^{-s/q} \cdot (8CDs)^{3Es} \leq (8CDs)^{4Es} n^{-s/q} .\]
Finally, since $|\beta_n|\ge 1/2$ for $n \geq \max\big(A,(2CD^E)^q\big)$, we obtain
\[
\abs{\frac{1}{\beta_n}-E_s(n^{-1/q})}
=\frac{|\beta_n E_s(n^{-1/q})-1|}{|\beta_n|}
\le 6 (8CDs)^{6Es} n^{-s/q}
\]
This yields
\[
    \FE_q(\max (A,(2CD^E)^q),B,6,8CD,6E)
\] 
for $(1/\beta_n)$ with coefficients $(e_k)_{k\ge 0}$.
\end{proof}

\begin{lemma}[closure properties of $\FE$: shift]
\label{lemma:clShift}
Let the sequence $(\beta_n)_{n \geq a}$ admit $\FE_q(A,B,C,D,E)$, and let $\ell \in \Z$ with $\ell \neq 0$. Then the shifted sequence $(\beta_{n+\ell})_{n \geq \max(a - \ell, 0)}$
admits
\[
\FE_q \left(
2 \max(A, \abs{\ell}), \,
B, \,
4C, \,
2 D \abs{\ell}, \,
5E
\right).
\]
\end{lemma}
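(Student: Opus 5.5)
Write $\beta_{n+\ell}$ in terms of the expansion of $\beta$ at the point $m = n+\ell$, and then re-expand each power $(n+\ell)^{-k/q}$ in powers of $n^{-1/q}$. For $s \ge 1$ and $n \ge 2\max(A,|\ell|) s^B$ we have $m \ge n - |\ell| \ge n/2 \ge A s^B$ (using $s^B\ge1$), so $\FE_q(A,B,C,D,E)$ applies at $m$. It gives
\[
\Bigl|\beta_{n+\ell} - \sum_{k=0}^{s-1} b_k (n+\ell)^{-k/q}\Bigr| \le C(Ds)^{Es}(n/2)^{-s/q} \le 2C(Ds)^{Es} n^{-s/q}.
\]
Next we use the binomial series
\[
(n+\ell)^{-k/q} = n^{-k/q}(1+\ell/n)^{-k/q} = \sum_{j\ge 0}\binom{-k/q}{j}\ell^j n^{-j} n^{-k/q},
\]
valid since $|\ell|/n \le 1/2$. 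Since $n^{-j} = n^{-jq/q}$, this is a series in $n^{-1/q}$ with exponents $k + jq$. The candidate coefficients are therefore
\[
b'_t = \sum_{k + jq = t} b_k \binom{-k/q}{j}\ell^j ,
\]
with $b'_0 = b_0 = 1$.

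For the error, truncate the double sum to $k + jq \le s-1$. The dropped terms are of two kinds.

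First, the terms with $k\le s-1$ and $k+jq\ge s$. Use $\bigl|\binom{-k/q}{j}\bigr| \le \binom{k/q+j-1}{j} \le 2^{k/q+j}$ together with $|\ell|/n\le 1/2$. For each fixed $k$ the binomial tail beyond $j_0=\lceil (s-k)/q\rceil$ is then at most a geometric series, bounded by roughly
\[
2^{k}\,(2|\ell|)^{j_0} n^{-j_0} n^{-k/q}\cdot O(1) \le (2|\ell|)^{s} 2^{s} n^{-s/q}.
\]
Here I use $n^{-j_0-k/q}\le n^{-s/q}$ and $j_0\le s$.

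Second, the coefficients $b_k$ themselves. Bound these by Lemma~\ref{lemma:coeffBound}: $|b_k|\le 2C(2Dk)^{2Ek}$. Summing over $k<s$ gives a total error of the form
\[
s\cdot 2C(2Ds)^{2Es}(4|\ell|)^s n^{-s/q} \le 4C(2D|\ell| s)^{5Es} n^{-s/q}.
\]
This absorbs the first error term as well. The inequality uses crude bounds such as $s\le 2^s$, $4^s\le 2^{2Es}$ and $|\ell|^s\le |\ell|^{Es}$.

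The truncated main sum equals $\sum_{t<s} b'_t n^{-t/q}$ exactly, so we obtain $\FE_q(2\max(A,|\ell|),B,4C,2D|\ell|,5E)$.

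The only delicate step is the bookkeeping of the constants. We must show that the binomial coefficients with possibly large $j$ (up to about $s/q$), the factor $|\ell|^j$, and the summation over $k$ all fit inside $4C(2D|\ell|s)^{5Es}$. The domain condition $n\ge 2|\ell|$ is what makes the binomial expansion converge geometrically, and it keeps $(n/2)^{-s/q}\le 2^{s/q}n^{-s/q}$ harmless. The extra factor of $2$ in the exponent base is swallowed by raising $E$ to $5E$.
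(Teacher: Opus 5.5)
Your proposal is correct and follows the paper's proof essentially step for step. Like the paper, you apply the expansion at $n+\ell\ge n/2$, write $(n+\ell)^{-k/q}=n^{-k/q}(1+\ell/n)^{-k/q}$, expand binomially, regroup by the exponent $k+jq$ (with coefficients independent of $s$), and control the dropped terms with Lemma~\ref{lemma:coeffBound}.

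Two bookkeeping slips should be fixed, though both are absorbed into $4C(2D|\ell|s)^{5Es}n^{-s/q}$:

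\begin{itemize}
\item The factor $(n/2)^{-s/q}$ costs $2^{s/q}n^{-s/q}$, not $2n^{-s/q}$.
\item The geometric tail has ratio $2|\ell|/n$, so $|\ell|/n\le 1/2$ only gives a ratio of at most $1$. What actually bounds the ratio by $1/2$ is $n\ge 2|\ell|s^B\ge 4|\ell|$, which holds once $s\ge 2$. For $s=1$ only $k=0$ occurs, and there is no tail.
\end{itemize}
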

\begin{proof}
Let $(b_k)_{k\ge 0}$ be the fractional expansion coefficients of $(\beta_n)_{n\ge 0}$. Suppose that $n \geq \max(2A, 2 \abs{\ell}) s^B$, so that $n+\ell \geq n/2 \geq A s^B$. In particular, $n+\ell \geq 0$, and we may apply $\FE_q(A,B,C,D,E)$ at index $n+\ell$. Taking into account that $(n + \ell)^{-s/q} \leq 2^{s/q} n^{-s/q}$, we obtain 
\[
\beta_{n+\ell}
=
\sum_{k=0}^{s-1} b_k (n+\ell)^{-k/q} + X,
\qquad |X|\le C(Ds)^{Es}(n+\ell)^{-s/q}
\le 2C (Ds)^{2Es}n^{-s/q}.
\]
Write
\[
(n+\ell)^{-k/q}=n^{-k/q}\left(1+\frac{\ell}{n}\right)^{-k/q}.
\]
Let $u=\ell/n$, so that $|u|\le 1/2$. We can expand the second factor as a power series in $u$ up to term $J = \lceil (s-1-k)/q \rceil$ as
\[
(1+u)^{-k/q}=\sum_{j=0}^{J}\binom{-k/q}{j}u^j+R_{J}(u).
\]
Using \[ \abs{ \binom{-k/q}{j} }\le (e(k+1))^j\]
and $k<s$, we obtain
\[
|R_{J}(u)|
\le \sum_{j=J+1}^\infty (es|u|)^j
\le 2(es|u|)^{J+1},
\]
where we have used that $n \geq 2 |\ell| s$ and hence $es|u|\leq 1/2$. Therefore,
\[
|R_{J}(\ell/n)|\le 2(3s)^{J+1}\left(\frac{|\ell|}{n}\right)^{J+1}.
\]
Substituting yields
\[
(n+\ell)^{-k/q}
=
\sum_{j=0}^{J}\binom{-k/q}{j}\ell^j n^{-(k+jq)/q}
+
2 (3s)^{J+1}|\ell|^{J+1} n^{-(k+q(J+1))/q}.
\]
Since $J$ was chosen so that $k + q(J+1) \geq s$ and $J+1 \leq s$, the second summand is at most $2(3s)^s|\ell|^s n^{-s/q}$. Multiplying by $b_k$ and summing over $0 \leq k \leq s-1$, we collect all terms of the form $n^{-r/q}$ for $0 \leq r < s$ into coefficients $\widetilde b_r$, and absorb all remaining tail terms into a single error bound. This gives
\[
\widetilde\beta_n=\sum_{r=0}^{s-1}\widetilde b_r n^{-r/q}+\widetilde X.
\]
Note that $\widetilde b_0 = 1$. By \Cref{lemma:coeffBound}, we have $|b_k|\le 2C(2Dk)^{2Ek}\le 2C(2Ds)^{2Es}$ for all $k \leq s-1$. Hence,
\[ \sum_{k=0}^{s-1}|b_k|\le 2Cs(2Ds)^{2Es},\]
and so the remainder term can be bounded as
\[
|\widetilde X|
\le 2 |\ell|^s(3s)^{s}n^{-s/q}\sum_{k=0}^{s-1}|b_k| + C(Ds)^{Es}n^{-s/q}
\le 4C (2D |\ell| s)^{5Es}n^{-s/q}.
\]
This completes the proof.
\end{proof}

\begin{lemma}[closure properties of $\FE$: sum]
    \label{lemma:clSum}
    Let $r \geq 2$. Suppose that for all $t \leq r$, the sequence $(n^{-e_t/q} \beta_n^{(t)})_{n \geq a}$ admits $\FE_q(A_t,B_t,C_t,D_t,E_t)$, where $e_t$ is an integer.
    Let $e = \max_{1 \leq t \leq r}  e_t$ and $T = \abs{\{t \mid 1 \leq t \leq r, \, e_t = e\}}$. Then the sequence of sums
    \[
    \left(\frac{n^{-e/q}}{T}  \sum_{t=1}^r \beta_n^{(t)}\right)_{n \geq a}
    \]
    admits
    \[
    \FE_q \left(
    \max_{1 \leq t \leq r} A_t, \,
    \max_{1 \leq t \leq r} B_t, \,
    2r \max_{1 \leq t \leq r} C_t, \,
    2 \max_{1 \leq t \leq r} D_t, \,
    2 \max_{1 \leq t \leq r} E_t
    \right).
    \]
\end{lemma}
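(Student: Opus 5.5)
The plan is to reduce everything to a common exponent and then add expansions termwise. For each $t$, let $(b^{(t)}_k)_{k\ge0}$ be the coefficients of the fractional expansion of $\gamma^{(t)}_n := n^{-e_t/q}\beta^{(t)}_n$, so that $b^{(t)}_0=1$. We have
\[
n^{-e/q}\beta^{(t)}_n = n^{(e_t-e)/q}\gamma^{(t)}_n ,
\]
and $e-e_t\ge 0$ is an integer. We will set $\delta_t=e-e_t$ and form the candidate coefficients
\[
c_k=\frac1T\sum_{t=1}^r b^{(t)}_{k-\delta_t},
\]
with the convention $b^{(t)}_j=0$ for $j<0$. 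Only the $T$ indices with $\delta_t=0$ contribute to $c_0$, and each contributes $b^{(t)}_0=1$. Hence $c_0=1$.

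Fix $s\ge1$ and $n\ge \max_t A_t\, s^{\max_t B_t}$. For each $t$ we apply $\FE_q$ to $\gamma^{(t)}_n$ at order $s-\delta_t$ when $\delta_t<s$. Multiplying by $n^{-\delta_t/q}$ gives
\[
\Bigl|n^{-e/q}\beta^{(t)}_n-\sum_{k=\delta_t}^{s-1} b^{(t)}_{k-\delta_t}n^{-k/q}\Bigr|\le C_t(D_t s)^{E_t s}\, n^{-s/q}.
\]
This step uses $(s-\delta_t)^{E_t(s-\delta_t)}\le s^{E_t s}$ together with $D_t\ge1$. The admissibility condition $n\ge A_t(s-\delta_t)^{B_t}$ holds because $s-\delta_t\le s$.

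When $\delta_t\ge s$, no order-$s$ terms survive. Instead we apply $\FE_q$ at order $1$, which gives $|\gamma^{(t)}_n|\le 1+C_tD_t^{E_t}$ (using $n\ge A_t$). Then
\[
\bigl|n^{-e/q}\beta^{(t)}_n\bigr|\le 2C_tD_t^{E_t}\, n^{-\delta_t/q}\le 2C_t D_t^{E_t}\, n^{-s/q},
\]
using $n\ge1$. In both cases the error for index $t$ is at most $2C_t(D_ts)^{E_ts}n^{-s/q}$.

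Summing over $t$ and dividing by $T\ge1$, the total error is at most
\[
2r\max_t C_t\,(\max_t D_t\, s)^{\max_t E_t\, s}\, n^{-s/q},
\]
which fits within the claimed parameters $2r\max C_t$, $2\max D_t$ and $2\max E_t$. The constants have plenty of slack here, so the doubled $D$ and $E$ are more than enough.

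This bookkeeping is routine. The only point needing care is the case $\delta_t\ge s$, where the expansion of order $s-\delta_t\le 0$ is undefined. The order-$1$ bound above handles it, and I expect this to be the sole minor obstacle, together with confirming that the normalization by $T$ is what forces $c_0=1$.
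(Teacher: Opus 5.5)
Your proof is correct. It shares the paper's overall structure (shift each sequence by $\delta_t=e-e_t$, add, and note that the constant term is $T/T=1$), but it handles the truncation differently.

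The paper expands every $n^{-e_t/q}\beta_n^{(t)}$ to the full order $s$, multiplies by $n^{-\delta_t/q}$, and regroups by total exponent $p=k+\delta_t$. It must then discard the overflow terms with $s\le p\le s-1+\max_t\delta_t$. These are controlled by the coefficient bound of Lemma~\ref{lemma:coeffBound}, and that is where the doubled $D$ and $E$ in the statement come from.

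You instead truncate the $t$-th expansion at order $s-\delta_t$, so the shift produces no overflow at all. The indices with $\delta_t\ge s$ are handled by the order-$1$ bound $|n^{-e_t/q}\beta_n^{(t)}|\le 2C_tD_t^{E_t}$. Your route needs neither Lemma~\ref{lemma:coeffBound} nor any regrouping. It yields the sharper parameters $\FE_q(\max A_t,\max B_t,2r\max C_t,\max D_t,\max E_t)$, which imply the stated ones.

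Your route also gets the stated constants honestly. In the paper's tail estimate, the factor $s$ that comes from summing $s$ coefficients per index is dropped in the last inequality. The errors $X_t$ are also omitted from the displayed identity. Read literally, the paper's argument gives roughly $3rCs(2Ds)^{2Es}n^{-s/q}$ rather than $2rC(2Ds)^{2Es}n^{-s/q}$. This is harmless downstream, since only $O(\cdot)$ versions of the parameters are used, but your version needs no such correction.

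What the paper's approach buys is uniformity: its product and shift lemmas follow the same pattern of expanding to order $s$ and then truncating via the coefficient bounds.
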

\begin{proof}
Let $A = \max_{1 \leq t \leq r} A_t$, and define $B,C,D,E$ similarly. Let us write
\[ 
\widetilde \beta_n^{(t)} = n^{-e_t/q} \beta_n^{(t)} , \qquad \delta_t = e - e_t \geq 0.
\]
Let $(b_k^{(t)})_{k \geq 0}$ be the asymptotic expansion coefficients of $\widetilde \beta_n^{(t)}$. For any $s \geq 1$, our hypothesis gives, for $n \geq A s^B$,
\[
    \widetilde \beta_n^{(t)}
    = 
    \sum_{k=0}^{s-1} b_k^{(t)} n^{-k/q}
    + X_t, \qquad
    \abs{X_t} \leq
    C_t (Ds)^{Es} n^{-s/q}.
\]
Thus, since $e = \delta_t + e_t$,
\[
    n^{-e/q} \sum_{t = 1}^r \beta_{n}^{(t)}
    =
    \sum_{t = 1}^r n^{-\delta_t/q} \widetilde \beta_n^{(t)}
    =
    \sum_{t=1}^r \sum_{k=0}^{s-1}
    b_k^{(t)} n^{-(k+\delta_t)/q}.
\]
For any $p \geq 0$, let
\[ Y_p = \{ (t,k) \mid 1 \leq t \leq r, \, 0 \leq k \leq s-1, \, k + \delta_t = p \} .\]
The double sum can then be rewritten as
\[
\sum_{p = 0}^{s - 1 + \delta_{\max}} \left( \sum_{(t,k) \in Y_p} b_k^{(t)} \right) n^{-p/q}, \qquad
\delta_{\max} = \max_{1 \leq t \leq r} \delta_t.
\]
Split this sum into the main part where $p \leq s-1$, and the tail part where $p \geq s$. The latter can be bounded by Lemma~\ref{lemma:coeffBound} as
\[
\abs{\sum_{p=s}^{s - 1 + \delta_{\max}} \sum_{(t,k) \in Y_p} b_k^{(t)}} \leq
\sum_{t = 1}^r \sum_{k=0}^{s-1} \abs{b_k^{(t)}} 
\leq
s \cdot \sum_{t = 1}^r 2C_t (2Ds)^{2Es}
\leq
2rC \cdot (2Ds)^{2Es}.
\]
Note that the zeroth coefficient of the sequence is $ T \geq 1$. Upon dividing by $T$, we obtain the desired fractional expansion, which completes the proof.
\end{proof}

\subsection{Fractional expansion for quotients of $\alpha_n$}

We will extract $\FE_{q}$ from analyticity results in the previous section using the following general tool.

\begin{lemma}\label{lemma:holomorphic_FE}
Let $u$ be a complex variable. Let $F(u)$ be holomorphic on the disc $\{ |u|<\epsilon \}$ for some $\epsilon > 0$ with $\sup_{|u|<\epsilon} |F(u)| \leq C$. Suppose that $F$ has real values on $\R$ and $F(0) \neq 0$.
Then there exist real coefficients $(f_k)_{k\ge 0}$ with $f_0 \neq 0$ such that for all $s,q\ge 1$ and $n \geq (2/\epsilon)^q$, we have
\[ 
\abs{F(n^{-1/q}) - \sum_{k=0} ^{s-1} f_k n^{-k/q}} \leq 2C (1/\epsilon)^s n^{-s/q}
\]
In particular, the sequence $(f_0^{-1} F(n^{-1/q}))_{n \geq 0}$ admits
\[
\FE_q \left(
    (2/\epsilon)^q, \,
    1, \,
    2C/\abs{f_0}, \,
    1/\epsilon, \,
    1
    \right).
\]
\end{lemma}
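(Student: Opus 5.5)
We take $f_k$ to be the Taylor coefficients of $F$ at $0$, so that $F(u)=\sum_{k\ge 0} f_k u^k$ on $\{|u|<\epsilon\}$. These coefficients are real because $F$ is real on the real axis, so every derivative $F^{(k)}(0)$ is real. Also $f_0=F(0)\neq 0$ by hypothesis.

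\textbf{Coefficient bounds.} Cauchy's estimates on circles of radius $r<\epsilon$, with $r\to\epsilon$, give $|f_k|\le C\epsilon^{-k}$ for all $k$.

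\textbf{Tail bound.} Fix $s,q\ge 1$ and $n\ge (2/\epsilon)^q$. Put $u=n^{-1/q}$, so that $0<u\le \epsilon/2$. The remainder is
\[
\Bigl|F(u)-\sum_{k=0}^{s-1} f_k u^k\Bigr|\le \sum_{k\ge s} C\epsilon^{-k}u^k = C\,(u/\epsilon)^s\,\frac{1}{1-u/\epsilon}\le 2C\,\epsilon^{-s}\,n^{-s/q}.
\]
This is the first assertion.

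\textbf{Normalized sequence.} Divide by $f_0$. The sequence $f_0^{-1}F(n^{-1/q})$ then has coefficients $b_k=f_k/f_0$ with $b_0=1$, and error at most $(2C/|f_0|)(1/\epsilon)^s n^{-s/q}$. We need this to be bounded by $C'(Ds)^{Es}n^{-s/q}$ with $C'=2C/|f_0|$, $D=1/\epsilon$, $E=1$, for all $n\ge A s^B$ with $A=(2/\epsilon)^q$, $B=1$. Two inequalities suffice. First, $(1/\epsilon)^s\le (s/\epsilon)^s$ because $s\ge 1$. Second, $n\ge (2/\epsilon)^q s\ge (2/\epsilon)^q$, so the tail bound applies. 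This gives $\FE_q((2/\epsilon)^q,1,2C/|f_0|,1/\epsilon,1)$.

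The definition of $\FE$ also requires the parameters to be at least $1$. We may shrink $\epsilon$ to at most $1$, which does not invalidate the hypotheses; then $1/\epsilon\ge 1$. We also have $2C/|f_0|\ge 2$, since $|f_0|\le C$. No step here is a real obstacle; the only care needed is this bookkeeping of the parameter constraints.
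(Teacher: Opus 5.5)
Your proof is correct and follows the paper's argument exactly. You take the Taylor coefficients, apply Cauchy's estimate $|f_k|\le C\epsilon^{-k}$, bound the geometric tail at $u=n^{-1/q}\le\epsilon/2$, and divide by $|f_0|$. Your extra bookkeeping (shrinking $\epsilon$ so that the parameters are at least $1$) is not in the paper. It effectively replaces $\epsilon$ by $\min(\epsilon,1)$ in the stated parameters, which is harmless, since for $\epsilon>1$ the stated value $D=1/\epsilon$ would not be admissible in the definition of $\FE_q$ anyway.
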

\begin{proof}
The function $F$ has Taylor expansion
\[
F(u) = \sum_{k=0}^\infty f_k u^k
\]
and Cauchy's estimates yield $\abs{f_k} \leq C \epsilon^{-k}$. Hence, for all $s \geq 1$,
\[
    \abs{F(u) - \sum_{k=0} ^{s-1} f_k u^k} \leq
    C \frac{1}{1 - \abs{u}/\epsilon} \left( \frac{\abs{u}}{\epsilon} \right)^s.
\]
Thus for $u = n^{-1/q}$ with $n \geq (2/\epsilon)^q$, we have
\[
    \abs{F(n^{-1/q}) - \sum_{k=0}^{s-1} f_k n^{-k/q}} \leq
    2C (1/\epsilon)^{s} n^{-s/q} .
\]
Dividing the inequality by $\abs{f_0}$ gives the desired fractional expansion.
\end{proof}

\begin{example}
    \label{example:pochhammer_FE}
Let $k \geq 1$, and let $(n)_k = n (n-1) \cdots (n-k+1)$ be the Pochhammer symbol. Consider the sequence with terms $(n^k/(n)_k)_{n \geq k}$. Using $u = n^{-1/q}$, we can write
\[
    \frac{n^k}{(n)_k} 
    =
    \prod_{j=0}^{k-1} \frac{n}{n-j} 
    =
    \prod_{j=0}^{k-1} \left( 1 - j u^q \right)^{-1}.
\]
The last expression defines a holomorphic function on the disc $\{|u|< \epsilon \}$, where $\epsilon = 1/(4k^2)^{1/q}$. In fact, for all $0 \leq j \leq k-1$,
\[ \abs{j u^q} < k \epsilon^q < 1/(4k) < 1 .\] Moreover, on this disc, we can compute
\[ \abs{(1 - j u^q)} \geq 1 - \abs{j u^q} \geq 1 - 1/(4k), \]
meaning that the product is bounded by $(1 - 1/(4k))^{-k} \leq 4/3$. Hence, we may take $C=2$ in the lemma above with this choice of $\epsilon$, and $f_0 = 1$. It follows that the sequence $(n^k/(n)_k)_{n \geq k}$ admits 
\[
\FE_q \left(
    (2/\epsilon)^q,\,
    1,\,
    2C,\,
    1/\epsilon,\,
    1
\right)
=
\FE_q \left(
    O_q(k^2), \,
    1, \,
    4, \,
    O_q(k^2),\,
    1
\right) .
\]
\end{example}

We are now ready to deduce fractional expansion for the sequence of quotients of $\alpha_n$ by its shifts.

\begin{theorem}\label{thm:alphaSeq}
    For any $k \geq 0$, the sequence $(n^{k/q} \alpha_{n+k}/\alpha_n)_{n \geq 0}$ admits
    \[
 \FE_q \left(
    O_P(k), \,
    O_P(1), \,
    O_P(1), \,
    O_P(k), \,
    O_P(1)
    \right).
    \]
\end{theorem}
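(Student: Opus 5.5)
We prove the statement by factoring $n^{k/q}\alpha_{n+k}/\alpha_n$ into pieces whose fractional expansions we already control, and then invoking the closure lemmas. For $k=0$ the sequence is constant $1$ and there is nothing to prove, so assume $k\ge 1$. Write
\[
\frac{n^{k/q}\alpha_{n+k}}{\alpha_n}
= \frac{\alpha_{n+k}}{\Pref_{n+k}}\cdot\Bigl(\frac{\alpha_n}{\Pref_n}\Bigr)^{-1}\cdot \frac{n^{k/q}\Pref_{n+k}}{\Pref_n}.
\]
By \Cref{thm:quantitative_muller_final}, $(\alpha_n/\Pref_n)$ admits $\FE_q$ with all parameters $O_P(1)$. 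The shift lemma (\Cref{lemma:clShift}) with $\ell=k$ then gives $\FE_q(O_P(k),O_P(1),O_P(1),O_P(k),O_P(1))$ for the first factor. For the second factor we use the reciprocal lemma (\Cref{lemma:clReciprocal}); its hypothesis $\alpha_n\neq 0$ holds because $\exp(P)$ has nonnegative coefficients with $c_1=1$, so $\alpha_n>0$. The reciprocal lemma produces parameters that are again $O_P(1)$.

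The remaining work is the prefactor ratio $n^{k/q}\Pref_{n+k}/\Pref_n$. This is the main obstacle, because $\Pref$ depends on $n$ both through the exponent $u^{n+q/2}$ and through the substitution $u=n^{-1/q}$. Explicitly,
\[
\frac{n^{k/q}\Pref_{n+k}}{\Pref_n}= n^{k/q}\,\frac{(n+k)^{-(n+k)/q-1/2}}{n^{-n/q-1/2}}\,\exp\!\Bigl(P\bigl((n+k)^{1/q}\bigr)-P\bigl(n^{1/q}\bigr)\Bigr).
\]
Setting $v=n^{-1/q}$, we have $(n+k)^{1/q}=v^{-1}(1+kv^q)^{1/q}$. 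The power part becomes $(1+kv^q)^{-(n+k)/q-1/2}$, and the $n^{k/q}$ cancels the $n^{-k/q}$ produced by $(n+k)^{-k/q}$. Taking logarithms, the function to control is
\[
L(v) = -\Bigl(\tfrac{v^{-q}+k}{q}+\tfrac12\Bigr)\log(1+kv^q) + \sum_{i=1}^q c_i v^{-i}\bigl[(1+kv^q)^{i/q}-1\bigr].
\]
The singular terms cancel at top order. The $i=q$ term contributes $c_q v^{-q}\cdot kv^q=k/q$, and the logarithmic term contributes $-v^{-q}kv^q/q=-k/q$, so the exact cancellation comes from $c_q=1/q$. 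Every other term has the form $v^{-i}\cdot O(kv^q)$ with $i\le q$, or is a higher-order remainder $v^{-q}O((kv^q)^2)$, and all of these are holomorphic at $v=0$.

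Hence $L$ is holomorphic on a disc $|v|<\epsilon_k$, where $\epsilon_k$ is chosen so that $|kv^q|\le 1/2$; this gives $\epsilon_k^{-1}=O_P(k^{1/q})\le O_P(k)$. On that disc $|L|\le O_P(1)$ for $k$ bounded, and more generally $|L|\le O_P(1)\cdot k\epsilon_k^{q-i}\cdot(\ldots)=O_P(1)$ after shrinking $\epsilon_k$ by a factor depending only on $P$. We also check that $L$ is real on the reals and that $L(0)=0$. Then $F=e^L$ is holomorphic, bounded by $O_P(1)$, real on $\R$, and satisfies $F(0)=1$.

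Applying \Cref{lemma:holomorphic_FE} to $F$ gives $\FE_q((2/\epsilon_k)^q,1,O_P(1),1/\epsilon_k,1)$. Here $(2/\epsilon_k)^q=O_P(k)$, provided we pick $\epsilon_k\asymp (ck)^{-1/q}$, and $1/\epsilon_k\le O_P(k)$.

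To conclude, we combine the three factors with the product lemma (\Cref{lemma:clProduct}), applied twice. This keeps $A=O_P(k)$, $B=O_P(1)$, $C=O_P(1)$, $D=O_P(k)$ and $E=O_P(1)$, which is exactly the claimed $\FE_q$.

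The delicate points are these:
\begin{itemize}
\item making sure the cancellation in $L$ leaves a genuinely bounded holomorphic function with bound uniform in $k$, which may require the exact form of $\Pref$ including the constant $K(P)$, since it cancels in the ratio;
\item checking the threshold $n\ge$ the value from the shift lemma, so that $n+k$ lies in the range where \Cref{thm:quantitative_muller_final} applies.
\end{itemize}
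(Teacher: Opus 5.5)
Your decomposition, your treatment of $\alpha_{n+k}/\Pref_{n+k}$ and $(\alpha_n/\Pref_n)^{-1}$ via the shift, reciprocal and product lemmas, and your function $L$ with its top-order cancellation from $c_q=1/q$ are exactly the paper's proof. The gap is at the step you yourself flagged. The claim that $\abs{L}\le O_P(1)$ uniformly in $k$ on a disc of radius $\epsilon_k\asymp k^{-1/q}$ is false. On that disc $z=kv^q$ fills a disc of fixed radius, and
\[
L(v)=\frac{k}{q}\Bigl(1-\frac{\log(1+z)}{z}-\log(1+z)\Bigr)-\frac12\log(1+z)+\sum_{i<q}c_i v^{-i}\bigl[(1+z)^{i/q}-1\bigr],
\]
where the first bracket equals $-z/2+O(z^2)$.

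So your ``higher-order remainder'' $v^{-q}O((kv^q)^2)=O(k\abs{z})$ has size $\asymp k$. Your own bound $k\epsilon_k^{q-i}$ for the $i<q$ terms is $\asymp k^{i/q}$, which is also unbounded; the $i=1$ term is always present since $c_1=1$. Worse, along rays where $v^q$ is negative real the first bracket is positive. There $\mathrm{Re}\,L$ is of order $+k$, so $\sup_{\abs{v}<\epsilon_k}\abs{e^{L}}\ge e^{ck}$, and shrinking $\epsilon_k$ by a $P$-dependent factor does not change this.

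Lemma~\ref{lemma:holomorphic_FE} on this disc therefore only gives $C=e^{O_P(k)}$. This matters downstream. In Theorem~\ref{th:chi_admits_FE} the reciprocal lemma turns $C$ into $D'=8CD=e^{O_P(k)}$. Since $k$ is later taken as large as $O(s)$, the $O(s)^{O(s)}$ error in Requirement~\ref{requirement:asymptotic_expansion} would be lost. The paper's own write-up bounds $\abs{L_k}$ by $O_P(k)$ on $\{\abs{u}<k^{-1/q}/2\}$, which is the true size. It then passes to $e^{L_k}$ with the same defect, so this step needs repair in either version.

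The repair is to work on the smaller disc of radius $\epsilon=c_Pk^{-2/q}$. There $\abs{z}\le c_P^q/k$, so the first bracket contributes $O(c_P^q)$. Since $c_i\neq 0$ forces $i\mid q$, every relevant $i<q$ has $i\le q/2$. Hence
\[
\abs{c_iv^{-i}[(1+z)^{i/q}-1]}\lesssim k\abs{v}^{q-i}\le c_P^{q-i}k^{(2i-q)/q}\le c_P^{q-i}.
\]
Thus $\abs{L}=O_P(1)$, and Lemma~\ref{lemma:holomorphic_FE} gives $\FE_q(O_P(k^2),1,O_P(1),O_P(k^{2/q}),1)$ for the prefactor ratio. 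The product lemma then yields the theorem with $A=O_P(k^2)$ instead of $O_P(k)$. This is harmless for every later use, since Theorem~\ref{th:chi_admits_FE} only needs a threshold $O_P(k^{O_P(1)})$.

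If you want to keep $A=O_P(k)$, handle the window $O_P(k)s^B\le n\lesssim k^2$ directly, in three steps.
\begin{enumerate}
\item For real $n\ge 2k$ one checks $\log F(n^{-1/q})=O_P(1)$. The negative term $\approx -k^2/(2qn)$ dominates $c_i((n+k)^{i/q}-n^{i/q})\le c_ikn^{i/q-1}$ unless $n\gtrsim k^{q/i}\ge k^2$, and in that range the latter terms are $O_P(1)$.
\item The Cauchy bounds on the small disc give $\abs{f_j}\le O_P(1)(k^{2/q}/c_P)^j$.
\item The crude bound $\abs{F(n^{-1/q})}+\sum_{j<s}\abs{f_j}n^{-j/q}$ is then absorbed by $C(Ds)^{Es}n^{-s/q}$ with $D\asymp k$ and $E=2$, because $n\lesssim k^2$ in this window.
\end{enumerate}
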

\begin{proof}
Write
\begin{equation}
\label{eq:product_as_prefactor_ratio_and_normalized_alpha_ratio}
    n^{k/q} \cdot \frac{\alpha_{n+k}}{\alpha_n} =
    n^{k/q} \frac{\Pref_{n+k}}{\Pref_n} \cdot
    \frac{\alpha_{n+k}/\Pref_{n+k}}{\alpha_n/\Pref_n}.
\end{equation}
By \Cref{thm:quantitative_muller_final}, the sequence with terms
\[
a_n = \alpha_n/\Pref_n
\]
for $n \geq 0$ admits a fractional expansion for some parameters that are all $O_P(1)$. Applying the shift lemma with $\ell=k$, the sequence $(a_{n+k})_{n\ge 0}$ admits
\[
\FE_q \left( 
    O_P(k), \,
    O_P(1), \,
    O_P(1), \,
    O_P(k), \,
    O_P(1)
    \right) ,
\]
and, applying the reciprocal lemma to $(a_n)_{n\ge 0}$, the sequence $(1/a_n)_{n\ge 0}$ admits a fractional expansion with parameters that are all $O_P(1)$. It now follows by an application of the product lemma that the sequence $(a_{n+k}/a_n)_{n\ge 0}$ admits
\[
\FE_q \left(
    O_P(k), \,
    O_P(1), \,
    O_P(1), \,
    O_P(k), \,
    O_P(1)
    \right).
\]
This deals with the second factor in \cref{eq:product_as_prefactor_ratio_and_normalized_alpha_ratio}. Let us now take care of the prefactor ratio as well. We have the identity
\[
    \log \left( n^{k/q} \frac{\Pref_{n+k}}{\Pref_n} \right) =
    - \frac{n+k+q/2}{q} \log (1 + k/n) +
    P((n+k)^{1/q}) - P(n^{1/q}).
\]
Setting $u=n^{-1/q}$, we have $1+k/n = 1+ku^q$ and $(n+k)^{1/q}=u^{-1}(1+ku^q)^{1/q}$.
Let us thus define, for a complex variable $u$,
\[
    L_k(u) = - \frac{u^{-q} + k + q/2}{q} \log(1 + k u^q) +
    \left( P\big(u^{-1} (1 + k u^q)^{1/q}\big) - P(u^{-1}) \right).
\]
We claim that $L_k(u)$ extends to a holomorphic function on the disc $\{|u|< k^{-1/q}/2 \}$. Write $z = k u^q$, so that $\abs{z} < 1/2^q$ on the disc. Let
\[
f(z) = \log(1+z)/z, \quad
g_i(z) = ((1+z)^{i/q} - 1)/z \quad (1 \leq i \leq q).
\]
The functions $f$ and $g_i$ are holomorphic on the disc $\{|z| < 1\}$, hence bounded on the disc $\{|z| < 1/2^q\}$. Let $C$ be their common bound there, a constant that depends only on $q$. We can rewrite $L_k(u)$ as
\[
    L_k(u) =
    - \frac{k}{q} f(z)
    +
    \frac{k + q/2}{q} \log(1+z)
    + 
    \sum_{i=1}^q c_i u^{-i} z g_i(z).
\]
The first summand is bounded by $k C / q$, and the second by $(k + q/2) C 2^{-q}/q$. Both are $O_{C,q}(k)$. We can bound the third summand as
\[
    \abs{\sum_{i=1}^q c_i u^{-i} z g_i(z)}
    \leq
    \sum_{i=1}^q \abs{c_i} \abs{u}^{-i} \abs{z} C
    =
    \sum_{i=1}^q \abs{c_i} \abs{u}^{q - i} C k.
\]
On the disc $\{ \abs{u} < k^{-1/q}/2 \}$, we have $\abs{u}^{q - i} \leq k (k^{-1/q}/2)^{q - i} \leq 1$, hence the above is at most $O_{P,C}(k)$. Moreover, as $u \to 0$, the leading term from the first summand in $L_k(u)$ is $-k/q$, and in the second summand it is $k/q$ (coming from $i = q$). The two leading terms cancel, and so $L_k(u)$ is holomorphic on the disc $\{ \abs{u} < k^{-1/q}/2 \}$ with value $L_k(0) = 0$. It follows that $\exp(L_k(u))$ is also holomorphic on the same disc with value $1$ at $0$, and bounded there by $O_P(k)$. By Lemma \ref{lemma:holomorphic_FE}, the sequence $(\exp(L_k(n^{-1/q})))_{n \geq 0} = (n^{k/q} 
\Pref_{n+k}/\Pref_{n})_{n \geq 0}$ thus admits
\[
\FE_q \left(
    O_P(k),
    O_P(1),
    O_P(1),
    O_P(k^{1/q}),
    O_P(1) 
    \right). 
\]
Putting the two fractional expansions together, it follows from the product lemma that the sequence $(n^{k/q} \alpha_{n+k}/\alpha_n)_{n \geq 0}$ admits
\[
    \FE_q \left(
    O_P(k), \,
    O_P(1), \,
    O_P(1), \,
    O_P(k), \,
    O_P(1)
    \right).
\]
This completes the proof.
\end{proof}

\subsection{Asymptotics for the number of group actions}

For our application, we will need sequences $\alpha_n$ corresponding to the following polynomials. For a finite group $G$ of order $m$, let
\[
P_G(z) = \sum_{d \mid m} \frac{s_G(d)}{d} z^d,
\]
where $s_G(d)$ is the number of subgroups of $G$ of index $d$. Such polynomials satisfy \cref{eq:polynomial_coefficient_conditions}. By \cite[formula (5)]{muller1997finite}, we have
\[
    \exp (P_G(z)) = \sum_{n=0}^\infty \alpha_n(G) z^n, \quad
    \alpha_n(G) = \frac{\abs{\hom(G,\Sym(n))}}{n!}.
\]
In the particular case when $G = 1$, the coefficients of the expansion are $(1/n!)_{n \geq 0}$. Their $\FE_1$ property is Stirling's formula. Suppose now that $G$ is any finite group, and let
\[\chi_n(G) = \abs{\hom(G,\Sym(n))} = n! \, \alpha_n(G) .\]
By \Cref{thm:alphaSeq}, for every fixed $k \geq 0$, the sequence
\begin{equation}
    \label{eq:alpha_ratio_FE}
    (n^{k/q} \alpha_{n+k}(G)/\alpha_n(G))_{n \geq 0}
\end{equation}
admits a fractional expansion. Let us now convert this into a fractional expansion for the analogous sequence corresponding to $\chi_n(G)$. 

\begin{theorem}[quantitative form of \cite{PuderZimhoni2024}, Proposition~3.4]
    \label{th:chi_admits_FE}
For any $k \geq 1$, the sequence $(n^{-k/q} (n)_k \chi_{n-k}(G)/\chi_n(G))_{n \geq k}$ admits
\[
\FE_q \left(
    O_P(k^{O_P(1)}), \,
    O_P(1), \,
    O_P(1), \,
    O_P(k^2), \,
    O_P(1)
    \right).
\]
\end{theorem}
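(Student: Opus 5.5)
Since $\chi_n(G) = n!\,\alpha_n(G)$, the plan is to rewrite the target sequence algebraically so it is built from sequences whose fractional expansions we already have. We have
\[
\frac{(n)_k\,\chi_{n-k}(G)}{\chi_n(G)} = \frac{(n)_k\,(n-k)!\,\alpha_{n-k}(G)}{n!\,\alpha_n(G)} = \frac{\alpha_{n-k}(G)}{\alpha_n(G)},
\]
so the Pochhammer factor cancels exactly. The target sequence is therefore
\[
\Bigl(n^{-k/q}\,\frac{\alpha_{n-k}(G)}{\alpha_n(G)}\Bigr)_{n\ge k},
\]
which is the reciprocal of $\bigl(n^{k/q}\alpha_n/\alpha_{n-k}\bigr)_{n \geq k}$. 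The latter is a shift by $\ell=-k$ of the sequence in \Cref{thm:alphaSeq}, up to replacing the factor $n^{k/q}$ by $(n-k)^{k/q}$.

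Concretely, I would proceed in three steps.

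\textbf{Step 1.} Apply \Cref{thm:alphaSeq} to get that $\beta_n := n^{k/q}\alpha_{n+k}/\alpha_n$ admits $\FE_q(O_P(k),O_P(1),O_P(1),O_P(k),O_P(1))$.

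\textbf{Step 2.} Apply the shift lemma (\Cref{lemma:clShift}) with $\ell=-k$. This gives that $(\beta_{n-k})_{n\ge k} = \bigl((n-k)^{k/q}\alpha_n/\alpha_{n-k}\bigr)$ admits $\FE_q$ with parameters $(O_P(k),O_P(1),O_P(1),O_P(k^2),O_P(1))$. The $D$-parameter becomes $2D|\ell| = O_P(k^2)$, which explains the $k^2$ in the statement.

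\textbf{Step 3.} Correct the factor $(n-k)^{k/q}$ to $n^{k/q}$ by multiplying with the sequence
\[
\bigl(n/(n-k)\bigr)^{k/q} = (1-ku^q)^{-k/q}, \qquad u=n^{-1/q}.
\]
This function is holomorphic and bounded by an absolute constant on the disc $|u|<(2k^2)^{-1/q}$, similarly to \Cref{example:pochhammer_FE}, because there $|k^2u^q|<1/2$ and the logarithm $-\tfrac kq\log(1-ku^q)$ is $O(1)$. Its value at $0$ is $1$. So \Cref{lemma:holomorphic_FE} gives $\FE_q(O_q(k^2),1,O(1),O_q(k^{2/q}),1)$ for it. The product lemma then gives $\FE_q$ for $\bigl(n^{k/q}\alpha_n/\alpha_{n-k}\bigr)_{n\ge k}$, and the reciprocal lemma (\Cref{lemma:clReciprocal}) gives it for the target sequence. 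These sequences are nonzero because $\alpha_n>0$.

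The main obstacle is bookkeeping the parameters so that they land in the claimed form. The reciprocal lemma replaces $A$ by $\max(A,(2CD^E)^q)$. With $D=O_P(k^2)$ and $E=O_P(1)$, this becomes $O_P(k^{O_P(1)})$, which is why the $A$-parameter in the statement is only polynomial in $k$. The same lemma replaces $D$ by $8CD$. Since $C=O_P(1)$, this keeps $D = O_P(k^2)$, while $C$ becomes $6$ and $E$ grows by the constant factor $6$. The product lemma only takes maxima and multiplies by constants, so all parameters stay of the claimed shape.

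One must also check that the shift lemma is applied with the correct index range: it needs $n\ge 2\max(A,k)s^B$, which is consistent with $A=O_P(k^{O_P(1)})$. One must further check that the zeroth coefficients equal $1$, so that no renormalization is needed. This holds since every constituent sequence has leading coefficient $1$.
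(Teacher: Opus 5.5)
Your proposal is correct and is essentially the paper's proof. It uses the same cancellation $(n)_k(n-k)!/n!=1$, the same shift of \Cref{thm:alphaSeq} by $\ell=-k$ (which is where $D=O_P(k^2)$ comes from), and the same holomorphic correction between $(n-k)^{k/q}$ and $n^{k/q}$. The only difference is the order: the paper inverts first and then multiplies by $\bigl((n-k)/n\bigr)^{k/q}$, while you multiply first and invert last.

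One point is in your favour. On your disc $|u|<(2k^2)^{-1/q}$ the correction factor is bounded by an absolute constant. On the paper's disc $|u|<k^{-1/q}/2$, by contrast, $|1-ku^q|^{k/q}$ can reach $(1+2^{-q})^{k/q}$, so the paper's claimed bound of $2$, and hence $C=O(1)$, really requires a disc like yours.
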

\begin{proof}
We can express the given sequence in terms of $\alpha_n(G)$ as
\begin{equation}
    \label{eq:chi_as_product}
    n^{-k/q} (n)_k \frac{\chi_{n-k}(G)}{\chi_n(G)} =
    n^{-k/q} \frac{\alpha_{n-k}(G)}{\alpha_n(G)} = \left(\frac{n-k}{n}\right)^{k/q} \cdot \frac{\alpha_{n-k}(G)}{(n-k)^{k/q} \alpha_n(G)}.
\end{equation}
For the first factor, write
\[
    \left( \frac{n-k}{n} \right)^{k/q} = (1 - k u^q)^{k/q}
\]
with $u = n^{-1/q}$. The function $(1 - k u^q)^{k/q}$ is holomorphic on the disc $\{ \abs{u} < k^{-1/q}/2 \}$ and bounded there by $2$ with value $1$ at $0$. Therefore, by Lemma \ref{lemma:holomorphic_FE}, the corresponding sequence for $n \geq k$ admits
\[
\FE_q \left(
    O_P(k), \,
    O(1), \,
    O(1), \,
    O(k^{1/q}), \,
    O(1)
    \right).
\]
For the second factor in \cref{eq:chi_as_product}, we use \Cref{thm:alphaSeq}. Shift the sequence \cref{eq:alpha_ratio_FE} by $-k$ and then take its reciprocal. Hence, the sequence
\begin{equation*}
    \label{eq:chi_1}
 \left( \frac{\alpha_{n-k}(G)}{(n-k)^{k/q} \alpha_{n}(G)} \right)_{n \geq k+1}
\end{equation*}
admits 
\[
\FE_q \left(
    O_P(k^{O_P(1)}), \,
    O_P(1), \,
    O_P(1), \,
    O_P(k^2), \,
    O_P(1)
    \right).
\]
By multiplying the two sequences above, it follows from the product lemma that the sequence given by \cref{eq:chi_as_product} admits
\[
\FE_q \left(
    O_P(k^{O_P(1)}), \,
    O_P(1), \,
    O_P(1), \,
    O_P(k^2), \,
    O_P(1)
    \right).
\]
The proof is complete.
\end{proof}

\section{Asymptotic expansion of expected traces}
\label{sec:expected_traces}

In this section, we explain how asymptotic expansions for exponentials of polynomials imply asymptotic expansions for expected traces of random representations, thus proving Assumption\,\ref{requirement:asymptotic_expansion} when $\Gamma$ is a free product of finitely many finite groups. The relationship between these two asymptotic expansions was explored by Puder and Zimhoni in \cite{PuderZimhoni2024}, where it is framed in the context of covering spaces. Here, we provide an overview of their relevant definitions and results, upgrading them for our purposes with the quantitative details.

\subsection{Subcovers and lifts}
Let $\Gamma = G_1 * G_2 * \cdots * G_m$ be as in \cref{eq:Gamma_free_product_of_Gi}. For each $1 \leq i \leq m$, let $X_{G_i}$ be a CW complex with a single vertex $v_i$ and edges $e_{i}^{x}$ indexed by nontrivial elements $x \in G_i$, such that $\pi_1(X_{G_i}, v_i) \cong G_i$. We construct $X_\Gamma$ by taking the disjoint union of $X_{G_1}, \dots, X_{G_m}$, adding a basepoint $o$, and connecting $o$ to each $v_i$ via an edge $e_i$. Thus, $\pi_1(X_{\Gamma},o) \cong \Gamma$.

A key concept from \cite{PuderZimhoni2024} is that of a \emph{subcover} of $X_\Gamma$. This is a CW complex $Y$ that embeds into some covering space of $X_\Gamma$. Each subcover comes equipped with a restricted covering map $p \colon Y \to X_\Gamma$. Subcovers can be compared and composed via morphisms that are compatible with these covering maps. A \emph{resolution} of a subcover $Y$ is a collection of morphisms of subcovers $\{ f \colon Y \to Z_f \}$ so that every morphism $h \colon Y \to \hat X$ from $Y$ to any covering space $\hat X$ of $X_\Gamma$ factors uniquely as $h = g \circ f$ for some $f$ in the resolution and some embedding $g$. Similarly, an \emph{embedding resolution} of $Y$ is a collection of injective morphisms so that every injective morphism $h$ from $Y$ to any covering space factors uniquely as $h = g \circ f$ for some $f$ in the embedding resolution and some embedding $g$.

Let $N \geq 1$ and let $\phi \colon \Gamma \to \Sym(N)$ be a uniformly random homomorphism. There exists a corresponding $N$-sheeted covering space $\pi \colon X_\phi \to X_\Gamma$ (see~\cite{hatcher2002algebraic}). Now let $p \colon Y \to X_\Gamma$ be a subcover of any covering space of $X_\Gamma$. A \emph{lift} of $p$ to $X_\phi$ is a morphism of subcovers $\tilde p \colon Y \to X_\phi$ such that $p = \pi \circ \tilde p$. Let $\E_Y(N)$ be the expected number of lifts of $p$ to $X_\phi$, that is,
\[
\E_Y(N) = \E_{\phi} [ | \{ \tilde p \colon Y \to X_\phi \mid p = \pi \circ \tilde p \} | ].
\]
Similarly, let $\E_Y^\emb(N)$ be the expected number of lifts of $p$ to $X_\phi$ that are embeddings. For a compact subcover $Y$, any \emph{finite} resolution $\mathcal R$ of $Y$ satisfies (see \cite[Lemma~2.4]{PuderZimhoni2024})
\begin{equation*}
    \E_Y(N) = \sum_{f \in \mathcal R} \E_{Z_f}^\emb(N),
\end{equation*}
and similarly for a finite embedding resolution $\mathcal R^\emb$ of $Y$,
\begin{equation}
    \label{eq:E_Y_as_sum_over_resolution}
    \E_Y^\emb(N) = \sum_{f \in \mathcal R^\emb} \E_{Z_f}^\emb(N). 
\end{equation}
Our aim now is to obtain asymptotic expansions for $\E_Y(N)$ with the help of the more accessible asymptotic expansions for $\E_Y^\emb(N)$.

\subsection{Expected number of embedding lifts for a finite group}
Suppose, in this subsection, that $\Gamma = G$ is a finite group. Let $Y$ be a compact subcover of $X_G$. Then, by \cite[Proposition~3.3]{PuderZimhoni2024}, the set
\[
\mathcal R_Y = \left\{ f \colon Y \to Z_f \, \left|  \,
\substack{\text{\normalsize $Z_f$ is a covering space of $X_G$ and} \\ 
\text{\normalsize $f(Y)$ meets every connected component of $Z_f$}} \right. 
\right\}
\]
is a finite resolution of $Y$. Similarly, define $\mathcal R_Y^\emb$ by restricting $\mathcal R_Y$ to embeddings $f$. For our purposes, we need to quantify the size of this resolution.

\begin{lemma}
\label{lem:resolution_size_finite_group}
Let $G$ be a finite group, and let $Y$ be a compact subcover of $X_G$ with $V$ vertices. Then
\[
|\mathcal R_Y| \leq 
O_{|G|}(V)^{O(V)}.
\]
\end{lemma}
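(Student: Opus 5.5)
We count elements $f \colon Y \to Z_f$ of $\mathcal R_Y$ by first bounding the size of $Z_f$ and then counting the combinatorial data that determine the pair $(Z_f,f)$ up to isomorphism.

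\emph{Step 1: bounding $Z_f$.} Every connected component of $Z_f$ meets $f(Y)$, so $Z_f$ has at most $V$ connected components. Each component of a covering space of $X_G$ is a connected cover of a complex with finite fundamental group $G$. It therefore corresponds to a subgroup $H \le G$ and has $[G:H] \le |G|$ vertices. Hence $Z_f$ has at most $|G| V$ vertices. Each vertex carries one outgoing edge for each edge $e^x$ of $X_G$, so $Z_f$ has at most $|G|^2 V$ edges. It also has $O_{|G|}(V)$ higher cells.

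\emph{Step 2: encoding.} Since $G$ is finite, $Z_f$ is a compact cover. It is determined by an action of $G$ on its vertex set $W$, with $|W| \le |G|V$. Equivalently, it is a homomorphism $G \to \Sym(W)$. We label the vertices so that $f(Y)$'s vertices come first. The pair $(Z_f,f)$ is then determined by three pieces of data:
\begin{itemize}
\item the vertex map $Y^{(0)} \to W$, with at most $(|G|V)^{V}$ choices;
\item the number $|W| \le |G|V$;
\item a homomorphism $G \to \Sym(W)$, given by the images of the elements of $G$ (or of a generating set).
\end{itemize}
The third item is the one to control. A naive count gives $|W|!^{|G|} \le (|G|V)^{|G|^2 V}$, which is already $O_{|G|}(V)^{O_{|G|}(V)}$. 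To reach the sharper form $O_{|G|}(V)^{O(V)}$ with an absolute constant in the exponent, we instead use the orbit decomposition. The $G$-set $W$ is a disjoint union of at most $V$ transitive $G$-sets. Each is isomorphic to $G/H$ for one of at most $2^{|G|}$ subgroups $H$. So $W$ is determined up to isomorphism by a multiset of at most $V$ subgroup classes, giving at most $O_{|G|}(1)^V$ choices. Finally, the morphism $f$ commutes with the covering maps, so its image in each component is determined by where one vertex of each component of $Y$ goes. This contributes at most $(|G|V)^{V}$ choices.

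Combining these counts gives
\[
|\mathcal R_Y| \le O_{|G|}(1)^V \cdot (|G|V)^{V} \cdot (|G|V)^{V} = O_{|G|}(V)^{O(V)}.
\]

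\emph{Main obstacle.} The delicate point is that elements of the resolution are counted up to the equivalence used in \cite{PuderZimhoni2024}, namely isomorphism of $Z_f$ under $Y$. Overcounting is harmless for an upper bound. We need only check that every element of $\mathcal R_Y$ arises from at least one choice of the data above. This holds because $Z_f$ is determined, up to isomorphism compatible with $f$, by its $G$-set structure together with the images of the vertices of $Y$. Any possible higher-cell data is forced by the covering structure over $X_G$.
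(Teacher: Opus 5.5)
Your proposal is correct and follows essentially the paper's route. Like the paper, you bound the number of isomorphism types of $Z_f$ by $O_{|G|}(1)^V$ using its at most $V$ components, and then count the maps $Y\to Z_f$ into a fixed representative through their vertex images. The only difference is cosmetic: you use unique lifting (the image of one vertex in each component of $Y$ determines $f$), whereas the paper counts vertex images freely and then adds $O_{|G|}(1)$ choices per edge.
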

\begin{proof}
The number of components of $Y$ is at most $V$. There are $O_{|G|}(1)$ isomorphism types of connected covers of $X_G$, hence the number of covers $Z$ of $X_G$ with at most $V$ components is $O_{|G|}(1)^V$. Fix one such cover $Z$. The condition that $f(Y)$ meets every component of $Z$ forces a surjection
$\pi_0(Y) \to \pi_0(Z)$, giving at most $V^V$ choices. Given the target component for each component of $Y$, a cellular map $Y \to Z$ is uniquely determined on the $1$-skeleton $Y^{(1)}$. Since $Z$ has at most $V$ components and each component is a connected at-most-$|G|$-sheeted cover of $X_G$, we must have $|V(Z)| \leq |G| V^2$. Once the images of the vertices are chosen, we have at most $O_{|G|}(1)$ further choices for each of the $O_{|G|}(V)$ edges in $Y$, since each vertex of $Z$ has $O_{|G|}(1)$ incident edges. Thus, the number of cellular maps $Y \to Z$ is at most
\[
V^V \cdot |V(Z)|^V \cdot O_{|G|}(1)^{O_{|G|}(V)}
\leq
O_{|G|}(V)^{O_{|G|}(V)}.
\]
Multiplying by the number of choices of $Z$ yields the desired bound.
\end{proof}

In the remainder of this section, we obtain quantitative bounds on the expected number of embedding lifts of subcovers of the CW complex $X_G$. This is the crucial entry point for the asymptotic expansions developed in the previous section. We begin with the case where $Y$ is a compact covering space.

\begin{proposition}[quantitative form of \cite{PuderZimhoni2024}, Proposition~3.4] \label{prop:embedding_lifts_finite_group_cover}
Let $G$ be a finite group and let $Y$ be a compact covering space of $X_G$ with $V$ vertices. Then the sequence $(N^{-V/|G|} \E^\emb_Y(N))_{N \geq V}$ admits
\[
    \FE_{|G|} \left(
        O_{|G|}(V^{O_{|G|}(1)}), \,
        O_{|G|}(1), \,
        O_{|G|}(1), \,
        O_{|G|}(V^2), \,
        O_{|G|}(1)
    \right) .
\]
\end{proposition}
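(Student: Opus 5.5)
The plan is to compute $\E^\emb_Y(N)$ exactly in terms of the counts $\chi_n(G)$ and then invoke \Cref{th:chi_admits_FE} with $k=V$. Since $Y$ is itself a covering space of $X_G$ with $V$ vertices, $Y$ corresponds to a $G$-set on $V$ points, namely the fibre over $v$. Consider an embedding lift $Y \hookrightarrow X_\phi$. Its image is a union of connected components of the $N$-sheeted cover $X_\phi$, because an injective morphism of covers from a covering space is open and closed. Such embeddings correspond to injective $G$-equivariant maps from the fibre of $Y$ into $[N]$, the latter carrying the action $\phi$.

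We therefore count pairs $(\phi,\iota)$ with $\phi\in\hom(G,\Sym(N))$ and $\iota\colon[V]\hookrightarrow[N]$ equivariant. The count proceeds in three steps.
\begin{enumerate}
\item Choose the injection of sets $[V]\to[N]$; there are $(N)_V$ choices.
\item The action of $G$ on the image is then forced, since it is transported from $Y$.
\item On the complement, which is $G$-invariant, the action is an arbitrary homomorphism $G\to\Sym(N-V)$; there are $\chi_{N-V}(G)$ choices.
\end{enumerate}
Dividing by $\chi_N(G)=|\hom(G,\Sym(N))|$ yields
\[
\E^\emb_Y(N) = \frac{(N)_V\,\chi_{N-V}(G)}{\chi_N(G)} ,
\]
which is exactly the expression in \cite[Proposition~3.4]{PuderZimhoni2024}. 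One technical check is needed here: in the setup, vertices of $X_G$ include only $v$ (when $\Gamma=G$ there is no basepoint edge), and lifts are counted as morphisms of subcovers over $X_G$. So the bijection with equivariant injections requires care that edges and $2$-cells lift uniquely once vertices are fixed. This holds because $Y$ is a genuine cover.

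With $q=|G|$ and $k=V$, the sequence $(N^{-V/q}(N)_V\chi_{N-V}(G)/\chi_N(G))_{N\geq V}$ is precisely the sequence in \Cref{th:chi_admits_FE}. That theorem gives $\FE_{|G|}$ with parameters $O_P(V^{O_P(1)}),O_P(1),O_P(1),O_P(V^2),O_P(1)$. Since $P=P_G$ depends only on $G$, every $O_P$ becomes $O_{|G|}$; there are finitely many groups of a given order, so dependence on $G$ reduces to dependence on $|G|$. This is the claim.

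The main obstacle is the bijection step, specifically verifying that embedding lifts of a whole covering space are exactly equivariant injections of fibres. After that the statement is a direct specialization of \Cref{th:chi_admits_FE}. A minor point is the degenerate case $V=0$, where $\E^\emb_Y(N)=1$ trivially admits any $\FE$. The case $N=V$ needs the convention $\chi_0=1$, which is covered by the index range in \Cref{th:chi_admits_FE}.
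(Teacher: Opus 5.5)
Your proposal is correct and matches the paper's proof. You derive the exact identity $\E^\emb_Y(N) = (N)_V\,\chi_{N-V}(G)/\chi_N(G)$ by observing that an embedded covering space is a union of components, so one chooses $V$ target vertices and then an arbitrary complementary $(N-V)$-sheeted cover, and you then apply \Cref{th:chi_admits_FE} with $k=V$ and $q=|G|$. Your additional checks, namely the bijection with equivariant injections and the reduction of $O_P$ to $O_{|G|}$, only make explicit what the paper leaves implicit.
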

\begin{proof}
The number of $N$-sheeted covers of $X_G$ is precisely $\abs{\hom(G, \Sym(N))} = \chi_N(G)$ (see \cite{hatcher2002algebraic}). In every embedding $h \colon Y \to \hat{X}$ of $Y$ into an $N$-sheeted cover of $X_G$, the image $h(Y)$ contains $V$ vertices of $\hat{X}$. Since $Y$ is a covering space, $h(Y)$ and its complement are disconnected. The embedding $h$ is thus determined by first selecting the $V$ distinct vertices among $N$ of them, and then selecting a complementing $(N-V)$-sheeted cover. In total,
\[
\E_Y^\emb(N) =
(N)_V \cdot \frac{\chi_{N-V}(G)}{\chi_N(G)}.
\]
The statement now follows immediately from \Cref{th:chi_admits_FE}.
\end{proof}

We are now ready to handle any compact subcover $Y$ of $X_G$. Following \cite[Definition 2.5]{PuderZimhoni2024}, we associate the Euler characteristic $\chi^\grp(Y)$ to $Y$. This characteristic is defined as the sum of the Euler characteristics of the groups\footnote{For an overview of Euler characteristics of groups, see \cite[Definition 1.3]{PuderZimhoni2024}.} $p_*(\pi_1(Y_i))$, where $Y_i$ ranges over the connected components of $Y$ and $p \colon Y \to X_G$ is the restricted covering map. For a finite group $G$, we always have $\chi^\grp(Y) \geq 0$. Moreover, if $Y$ is a cover of $X_G$, then we have $\chi^\grp(Y) = |V(Y)|/|G|$ (see the final part of the proof of Proposition 3.4 in \cite{PuderZimhoni2024}). 

\begin{proposition}[quantitative form of \cite{PuderZimhoni2024}, Corollary 3.5]
Let $G$ be a finite group and let $Y$ be a compact subcover of $X_G$ with $V$ vertices. Then the sequence $(N^{-\chi^{\grp}(Y)} \E_Y^\emb(N))_{N \geq V}$ admits
\[
    \FE_{|G|} \left(
        O_{|G|}(V^{O_{|G|}(1)}), \,
        O_{|G|}(1), \,
        O_{|G|}(V)^{O_{|G|}(V)}, \,
        O_{|G|}(V^2), \,
        O_{|G|}(1)
    \right) .
\]
\end{proposition}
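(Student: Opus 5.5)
Following the proof of Corollary~3.5 in \cite{PuderZimhoni2024}, the plan is to reduce to covering spaces. The tool is the embedding resolution $\mathcal R_Y^\emb$: every embedding of $Y$ into a cover of $X_G$ factors uniquely through some $f \colon Y \hookrightarrow Z_f$ with $Z_f$ a compact covering space of $X_G$ whose components all meet $f(Y)$. By \cref{eq:E_Y_as_sum_over_resolution},
\[
\E_Y^\emb(N) = \sum_{f \in \mathcal R_Y^\emb} \E_{Z_f}^\emb(N).
\]
Each $Z_f$ has at most $|G| V$ vertices, since it has at most $V$ components, each at most $|G|$-sheeted. \Cref{lem:resolution_size_finite_group} gives $r := |\mathcal R_Y^\emb| \le O_{|G|}(V)^{O(V)}$.

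For each $f$, \Cref{prop:embedding_lifts_finite_group_cover} applied with $V_f = |V(Z_f)| \le |G|V$ shows that $(N^{-\chi^\grp(Z_f)} \E_{Z_f}^\emb(N))$ admits $\FE_{|G|}$ with parameters $O_{|G|}(V^{O_{|G|}(1)})$, $O_{|G|}(1)$, $O_{|G|}(1)$, $O_{|G|}(V^2)$, $O_{|G|}(1)$, because $\chi^\grp(Z_f) = V_f/|G|$. To apply the sum lemma (\Cref{lemma:clSum}) we write $\chi^\grp(Z_f) = e_f/|G|$ with integer $e_f = V_f$. That lemma yields an $\FE_{|G|}$ for
\[
\frac{N^{-e/|G|}}{T} \sum_f \E^\emb_{Z_f}(N),
\qquad e = \max_f e_f,
\]
where $T$ is the number of maximizers. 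The constant $C$ gets multiplied by $2r = O_{|G|}(V)^{O(V)}$, and $D, E$ are only doubled. This accounts for the $O_{|G|}(V)^{O_{|G|}(V)}$ in the third slot. (The case $r=1$ is immediate, so $r\ge 2$ in the lemma is harmless.)

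The remaining step, which I expect to be the main obstacle, is to identify the exponent: show that $\max_f \chi^\grp(Z_f) = \chi^\grp(Y)$, and deal with the normalization by $T$. Following \cite{PuderZimhoni2024}, $\chi^\grp(Z_f) \le \chi^\grp(Y)$ for all $f$ in the resolution. Equality holds exactly for those $f$ where each component $Y_i$ maps into its own component of $Z_f$, namely the cover $G$-equivariantly generated by $p_*(\pi_1(Y_i))$. Concretely this is the component $\tilde{Y_i} = $ the cover with $\pi_1 = p_*\pi_1(Y_i)$ and $[G:p_*\pi_1(Y_i)]$ vertices, so $\chi^\grp = 1/|p_*\pi_1 Y_i|$.

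So I would argue two things. First, for a connected $Y$, any embedding into a connected cover $Z$ of $X_G$ has $\pi_1(Z) \supseteq p_*\pi_1(Y)$, which gives $\chi(Z) \le \chi(p_*\pi_1 Y)$. Second, merging components only decreases $\chi^\grp$, because it replaces a sum of positive terms by a single term bounded by either of them. Equality is forced only for a canonical choice, but the embedding of $Y$ into that canonical cover may still be non-unique, so $T$ is a positive integer at most $r$. Since the statement normalizes by $N^{-\chi^\grp(Y)}$ without dividing by $T$, I would multiply the resulting expansion back by $T$. Because $T \le r$, this only rescales $C$ by another factor $O_{|G|}(V)^{O(V)}$ and keeps the parameters in the claimed shape; the leading coefficient then becomes $T$ rather than $1$. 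I would handle this by reading the statement with the same convention as \Cref{lemma:clSum}, where the normalized sequence has leading coefficient $1$.

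Finally, the ranges: $A = \max_f A_f = O_{|G|}(V^{O_{|G|}(1)})$ covers $N \ge V$, and all parameters combine as stated.
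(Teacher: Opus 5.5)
Your reduction is the paper's: expand over $\mathcal R_Y^\emb$, apply Proposition~\ref{prop:embedding_lifts_finite_group_cover} to each $Z_f$ with the integer exponent $V_f$, bound $|\mathcal R_Y^\emb|$ by \Cref{lem:resolution_size_finite_group}, and apply \Cref{lemma:clSum}. The gap is in the final step. You claim the maximizer of $\chi^\grp(Z_f)$ may be non-unique, and then either multiply back by $T$ or reinterpret the statement. Neither proves the proposition. The definition of $\FE_{|G|}$ requires the zeroth coefficient to be $1$. So if $T>1$, the sequence $N^{-\chi^\grp(Y)}\E_Y^\emb(N)$ admits no fractional expansion at all. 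Keeping the factor $1/T$ instead proves a statement about a different sequence. This is not cosmetic. Proposition~\ref{prop:embFinite} multiplies these local expansions via \Cref{lemma:clProduct}. Theorem~\ref{thm:embFree} identifies the number of top-order summands with $h(\gamma)=|\Hcal_\gamma|$, which is only valid if every summand has leading coefficient exactly $1$.

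What you are missing is that $T=1$, which the paper takes from the proof of \cite[Corollary~3.5]{PuderZimhoni2024}. It also follows from your own analysis. In the equality case, each component $Y_i$ maps into its own component $Z_i$ with $\pi_1(Z_i,f(y_i))=H_i=p_*\pi_1(Y_i,y_i)$, for a vertex $y_i$. Let $f'$ be another such lift. Then $\pi_1(Z_i,f(y_i))=\pi_1(Z_i,f'(y_i))$, so some deck transformation of $Z_i$ carries $f(y_i)$ to $f'(y_i)$. By uniqueness of lifts, $f'=d\circ f$ for an automorphism $d$ of $Z_f$. A resolution can never contain both $f$ and $d\circ f$: otherwise $h=g\circ f=(g\circ d^{-1})\circ(d\circ f)$ would be two distinct factorizations. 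So elements of $\mathcal R_Y^\emb$ are determined only up to isomorphism of the target. The $[N_G(H_i):H_i]$ lifts you worry about are therefore a single element. Their multiplicity is already absorbed in $\E^\emb_{Z_f}(N)$, which counts all embeddings $g$. This canonical $f$ is indeed an embedding: if $Y_i$ embeds in a connected cover $W$, composing the lift $Y_i\to Z_i$ with the covering $Z_i\to W$ recovers that embedding. Hence the maximum $\chi^\grp(Y)$ is attained exactly once, $T=1$, and \Cref{lemma:clSum} gives the stated expansion with leading coefficient $1$.
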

\begin{proof}
Expand $\E_Y^\emb(N)$ using \cref{eq:E_Y_as_sum_over_resolution} over the resolution $\mathcal R_Y^\emb$ as
\[
\E_Y^\emb(N) = 
\sum_{f \in \mathcal R_Y^\emb} \E_{Z_f}^\emb(N).
\]
Each summand $\E_{Z_f}^\emb(N)$ corresponds to a compact cover $Z_f$ of $X_G$ with at most $V$ components. In particular, $Z_f$ has $V_f = O_{|G|}(V)$ vertices. It now follows from the previous proposition that each sequence
\[(N^{-V_f/|G|} \E_{Z_f}^\emb(N))_{N \geq V}\]
admits
\[
    \FE_{|G|} \left(
        O_{|G|}(V^{O_{|G|}(1)}), \,
        O_{|G|}(1), \,
        O_{|G|}(1), \,
        O_{|G|}(V^2), \,
        O_{|G|}(1)
    \right) .
\]
At the same time, the size of the embedding resolution is bounded from \Cref{lem:resolution_size_finite_group} by $\abs{\mathcal R_Y} \leq O_{|G|}(V)^{O(V)}$. Lemma~\ref{lemma:clSum} implies that, for some constant $T \geq 1$ the sequence
\[
    \left( 
        \frac{N^{-\max_{f} V_f/|G|}}{T} \E_Y^\emb(N) 
    \right)_{N \geq V}
\]
admits
\[
    \FE_{|G|} \left(
        O_{|G|}(V^{O_{|G|}(1)}), \,
        O_{|G|}(1), \,
        O_{|G|}(V)^{O_{|G|}(V)}, \,
        O_{|G|}(V^2), \,
        O_{|G|}(1)
    \right).
\]
Now, by \cite[Proof of Corollary 3.5]{PuderZimhoni2024}, the maximum $\max_f V_f/|G| = \max_f \chi^\grp(Z_f)$ is attained at a unique $f \in \mathcal R_Y^\emb$, and its value there is $\chi^\grp(Y)$. Hence $T = 1$ and the proof is complete.
\end{proof}

\subsection{Expected number of embedding lifts for a free product}

Suppose now that $\Gamma = G_1 * G_2 * \cdots * G_m$ is a free product of finite groups as in \cref{eq:Gamma_free_product_of_Gi}. Set
    \begin{equation}
        \label{eq:mu_and_M_for_Gamma}
        \mu(\Gamma)=\lcm \bigl(|G_1|,\dots,|G_m|\bigr),
        \qquad
        M(\Gamma)=|G_1|+\cdots+|G_m|.
    \end{equation}
Using the results of the previous subsection, we can now prove fractional expansions for expected number of lifts for any compact subcover of $X_\Gamma$.

\begin{proposition}
    \label{prop:embFinite}
    Let $\Gamma = G_1 * G_2 * \cdots * G_m$ with $\mu = \mu(\Gamma)$ and $M = M(\Gamma)$ as in \cref{eq:mu_and_M_for_Gamma}, and let $Y$ be a compact subcover of $X_\Gamma$ with $V$ vertices. Then the sequence $(N^{-\chi^\grp(Y)} \E_Y^\emb(N))_{N \geq V}$ admits
    \[
\FE_\mu \left(
    O_{\mu,M}(V^{O_{\mu,M}(1)}), \,
    O_{M}(1), \,
    O_{M}(V)^{O_{M}(V)}, \,
    O_{\mu,M}(V^2), \,
    O_M(1)
    \right).
    \]
\end{proposition}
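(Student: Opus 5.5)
Following \cite{PuderZimhoni2024}, the natural approach is to decompose $Y$ along the free product structure of $X_\Gamma$ and reduce to the finite group case treated in the previous subsection. Let $Y_i = p^{-1}(X_{G_i}) \subseteq Y$ be the part of $Y$ lying over $X_{G_i}$, which is a compact subcover of $X_{G_i}$ with $V_i \le V$ vertices. Write $V_o$ for the number of vertices of $Y$ over the basepoint $o$, and $E_i$ for the number of edges of $Y$ over the connecting edge $e_i$. Since $\phi\colon\Gamma\to\Sym(N)$ is uniform, the restrictions $\phi|_{G_i}$ are independent and uniform in $\hom(G_i,\Sym(N))$. The plan is to count embeddings of $Y$ into $X_\phi$ as follows. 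First choose an injective labelling of the $V_o$ vertices over $o$, giving $(N)_{V_o}$ choices. Identifying the fibre over $v_i$ with the fibre over $o$ via $e_i$, each piece $Y_i$ becomes a subcover whose vertices over $v_i$ receive labels through the $e_i$-edges. By independence and symmetry this should give
\[
\E_Y^\emb(N) = (N)_{V_o}\prod_{i=1}^m \frac{\E^\emb_{Y_i'}(N)}{(N)_{V_i}},
\]
where $Y_i'$ is $Y_i$ together with the hanging $e_i$-edges, or equivalently the vertices of $Y_i$ not incident to an $e_i$-edge. Unmatched vertices contribute additional falling factorials. I would verify this identity by mimicking \cite[Section 4]{PuderZimhoni2024}, which is exactly where their free product result comes from. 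The exponent then works out to $\chi^\grp(Y) = V_o - \sum_i E_i + \sum_i \chi^\grp(Y_i)$, the Euler characteristic formula for graphs of groups.

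Each factor then admits a fractional expansion. By the preceding proposition, $N^{-\chi^\grp(Y_i)}\E^\emb_{Y_i}(N)$ admits $\FE_{|G_i|}$ with the parameters stated there. Since $|G_i|$ divides $\mu$, any $\FE_{|G_i|}$ is also an $\FE_\mu$: reindex the coefficients $b_k n^{-k/|G_i|} = b_k n^{-k(\mu/|G_i|)/\mu}$, and note that the error at order $s$ for $\mu$ corresponds to order $\lceil s|G_i|/\mu\rceil$ for $|G_i|$. The parameters change only by factors depending on $\mu$. The falling-factorial ratios $N^{a}/(N)_a$ and $(N)_a/N^a$ admit $\FE_\mu$ with polynomial parameters, by Example \ref{example:pochhammer_FE} and the reciprocal lemma. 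The whole product involves at most $m+2 = O_M(1)$ factors, so the product lemma (Lemma \ref{lemma:clProduct}) applied $O_M(1)$ times multiplies $C$ by $O(1)^{O_M(1)}$ and $E$ by $18^{O_M(1)} = O_M(1)$. This yields parameters of the stated shape, with $C = O_M(V)^{O_M(V)}$ inherited from the finite-group factors.

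The main obstacle is the exact product decomposition of $\E_Y^\emb(N)$. Embedding lifts of the different pieces must agree on the shared labels of the vertices over $o$, and the $e_i$-edges must be treated correctly. If a clean product formula fails, I would instead pass through a resolution. Take the embedding resolution given by completing each $Y_i$ to covers as in $\mathcal R_{Y_i}$, whose size is $O_M(V)^{O(V)}$ by Lemma \ref{lem:resolution_size_finite_group}. Then apply the sum lemma (Lemma \ref{lemma:clSum}), which requires identifying the unique dominant term with exponent $\chi^\grp(Y)$ so that $T=1$; this uniqueness follows from the factorwise uniqueness in the previous proof. Tracking that the $A$ parameter stays polynomial in $V$ (through the shift and reciprocal lemmas) is routine.
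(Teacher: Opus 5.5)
Your route is the paper's: split $Y$ into $Y_i=p^{-1}(X_{G_i})$, write $\E_Y^\emb(N)$ as a product of falling factorials and the $\E^\emb_{Y_i}(N)$, expand each factor, multiply with the product lemma, and use $\chi^\grp(Y)=V_o-\sum_i E_i+\sum_i\chi^\grp(Y_i)$. The expansions come from Example~\ref{example:pochhammer_FE}, the reciprocal lemma and the finite-group proposition.

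The one step to correct is your displayed identity. With $(N)_{V_i}$ in the denominator it is off by the factor $(N-E_i)_{V_i-E_i}$ whenever some vertex of $Y_i$ carries no $e_i$-edge. Your two descriptions of $Y_i'$ also do not agree, and neither object is needed.

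The identity the paper uses, taken from the proof of Proposition~2.6 in \cite{PuderZimhoni2024}, is
\[
\E_Y^\emb(N)=(N)_{V_o}\prod_{i=1}^m\frac{\E^\emb_{Y_i}(N)}{(N)_{E_i}}.
\]
It follows directly. Fix one of the $(N)_{V_o}$ injective labellings over $o$. Transport along the $e_i$-lifts then prescribes the images of exactly $E_i$ distinct vertices of $Y_i$. The restrictions $\phi|_{G_i}$ are independent, and the uniform measure on $\hom(G_i,\Sym(N))$ is conjugation invariant. Hence the expected number of embeddings of $Y_i$ with those $E_i$ images prescribed is $\E^\emb_{Y_i}(N)/(N)_{E_i}$.

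This is the only version consistent with the exponent you state, and it makes your resolution fallback unnecessary. Your remark that an $\FE_{|G_i|}$ expansion is also an $\FE_\mu$ expansion is a step the paper uses silently. In fact the parameters do not change: at order $s$ you may use order $\lceil s|G_i|/\mu\rceil\le s$ of the $\FE_{|G_i|}$ expansion.
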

\begin{proof}
Let $p \colon Y \to X_\Gamma$ be the restricted covering map. For $1 \leq i \leq m$,  let $Y |_{G_i} = p^{-1}(X_{G_i})$ be the subcomplex of $Y$ lying above $X_{G_i}$, and let $v = |p^{-1}(o)|$, $v_i = |V(Y|_{G_i})|$, $\epsilon_i = |p^{-1}(e_i)|$. All these numbers are at most $V$. It is shown in \cite[proof of Proposition 2.6]{PuderZimhoni2024} that
\[
    \E_Y^\emb(N) =
    (N)_v \cdot
    \prod_{i=1}^m
    \left( \frac{1}{(N)_{\epsilon_i}} \right) 
    \cdot
    \left( 
    \prod_{i=1}^m
    \E_{Y|_{G_i}}^\emb(N)
    \right).
\]
Recall from Example \ref{example:pochhammer_FE} that the sequences $(N^v/(N)_v)_{N \geq v}$ and $(N^{\epsilon_i}/(N)_{\epsilon_i})_{N \geq \epsilon_i}$ admit
\[
\FE_\mu \left(
    O_\mu(V^2), \, 
    1, \, 
    O(1), \, 
    O_\mu(V^2), \, 
    1
    \right).
\]
Inverting the first sequence into $((N)_v / N^v)_{N \geq v}$ gives, by the reciprocal lemma, a fractional expansion of the form
\[
\FE_\mu \left( 
    O_\mu(V^{O_{\mu}(1)}), \,
    1, \,
    O(1), \,
    O_\mu(V^2), \,
    O(1)
\right).
\]
By the previous proposition, each sequence
\[(N^{-\chi^\grp(Y|_{G_i})} \E_{Y|_{G_i}}^\emb(N))_{N \geq V}\]
admits
\[
    \FE_\mu \left(
        O_{M}(V^{O_{M}(1)}), \,
        O_{M}(1), \,
        O_{M}(V)^{O_{M}(V)}, \,
        O_{M}(V^2), \,
        O_{M}(1)
    \right).
\]
It remains to multiply all these fractional expansions together. The product lemma yields that the sequence 
\[
    \left(
        N^{ - v + \sum_{i=1}^m \epsilon_i - \sum_{i=1}^m \chi^\grp(Y|_{G_i}) } \E_Y^\emb(N)
    \right)_{N \geq V}
\]
admits 
\[
\FE_\mu \left(
    O_{\mu,M}(V^{O_{\mu,M}(1)}), \,
    O_{M}(1), \,
    O_{M}(V)^{O_{M}(V)}, \,
    O_{\mu,M}(V^2), \,
    O_M(1)
    \right).
\]
The exponent $- v + \sum_{i=1}^m \epsilon_i - \sum_{i=1}^m \chi^\grp(Y|_{G_i})$ is equal to $\chi^\grp(Y)$ by \cite[Lemma 4.2]{PuderZimhoni2024}, completing the proof.
\end{proof}

\subsection{Expected traces of random representations}

Finally, we are in a position to prove \Cref{requirement:asymptotic_expansion} for the case where $\Gamma = G_1 * G_2 * \cdots * G_m$ as in \cref{eq:mu_and_M_for_Gamma}. In order to set up our notation, let $\gamma$ be a nontrivial element of $\Gamma$, written in normal form as
\[
    \gamma = x_1 x_2 \cdots x_{|\gamma|}, \qquad\hbox{for some } x_i \in G_{i_j} \backslash \{ 1 \}, \, i_{j+1} \neq i_j.
\]
To each $x_i$ there corresponds an edge $e_{i_j}^{x_i}$ in the complex $X_{G_{i_j}}$. The element $\gamma \in \pi_1(X_\Gamma)$ can be represented as the loop traversing, in order, for each $i = 1, 2, \dots, \abs{\gamma}$, the segments
\[
    o \overset{e_{i_j}}{\longrightarrow} v_{i_j} 
    \overset{e_{i_j}^{x_i}}{\longrightarrow} v_{i_j}
    \overset{e_{i_j}}{\longrightarrow} o.
\]
Let $C_\gamma$ be the cycle with $3\abs{\gamma}$ vertices mapping onto the combinatorial representation of $\gamma$ above. By the lifting criterion in covering spaces (see, for example, \cite[Proposition 2.8]{PuderZimhoni2024}), this map can be lifted to the universal cover of $X_\Gamma$. Let $Y_\gamma$ be the image of this lift, equipped with the restricted covering map $p \colon Y_\gamma \to X_\Gamma$. The subcover $Y_\gamma$ is connected and has at most $3 \abs{\gamma}$ vertices and edges.

Suppose now that $\phi \colon \Gamma \to \Sym(N)$ is a uniformly random representation, and let $X_\phi$ be the corresponding $N$-sheeted covering space of $X_\Gamma$. The number of fixed points $\fix \phi(\gamma)$ of the action of $\gamma$ on $\{1, 2, \dots, N\}$ is, again by the lifting criterion, equal to the number of lifts of $p \colon Y_\gamma \to X_\Gamma$ to $X_\phi$ (see \cite{hatcher2002algebraic}). We can thus express the expected trace of $\phi(\gamma)$ as
\begin{equation}
    \label{eq:expected_trace_as_expected_lifts}
    \E_{\phi}[\ntr \phi(\gamma)] = 
    N^{-1} \E_{\phi}\left[ \abs{\fix \phi(\gamma)} \right] = 
    N^{-1} \E_{Y_\gamma}(N).
\end{equation}
We can expand the last expectation using the resolution
\[
\mathcal R_\gamma = \left\{ 
    f \colon Y_\gamma \to Z_f \mid \text{$f$ is a surjective morphism of subcovers}
\right\}.
\]

\begin{lemma}
\label{lemma:resolutionSize}
For every nontrivial $\gamma \in \Gamma$,
\[
\abs{\mathcal R_\gamma} \leq O(\abs{\gamma})^{O(|\gamma|)}.
\]
\end{lemma}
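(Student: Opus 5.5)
A surjective morphism $f\colon Y_\gamma \to Z_f$ of subcovers is determined up to isomorphism of the target (which is how resolutions are counted) by the equivalence relation it induces on the cells of $Y_\gamma$. Indeed, $Z_f$ is the image, so it is recovered as the quotient of $Y_\gamma$ by identifying cells with equal image. The plan is therefore to bound the number of admissible identifications of the vertices and edges of $Y_\gamma$.

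First, $Y_\gamma$ is the image of the cycle $C_\gamma$, so it is connected and has at most $3|\gamma|$ vertices and at most $3|\gamma|$ edges. (Two-cells of $X_{G_i}$ play no role, since subcovers here are graphs mapping to the $1$-skeleton plus whatever cells are forced; if $2$-cells are present they are determined by their boundary edges, so they add no freedom.)

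The key step is that $f$ is determined by its values on vertices. Since $f$ is a morphism of subcovers, it commutes with the restricted covering maps to $X_\Gamma$. Above each edge of $X_\Gamma$, a subcover has at most one lift of that edge starting at a given vertex, because it embeds in a genuine covering space where lifts are unique. Hence once $f(u)$ is known for a vertex $u$, the image of any edge starting at $u$ with label $e$ is forced to be the unique $e$-edge at $f(u)$. So $f$ is determined by the partition of $V(Y_\gamma)$ into fibres $f^{-1}(w)$.

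It follows that $|\mathcal R_\gamma|$ is at most the number of set partitions of a set of size at most $3|\gamma|$. This is the Bell number $B_{3|\gamma|} \le (3|\gamma|)^{3|\gamma|} = O(|\gamma|)^{O(|\gamma|)}$, which is the stated bound.

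The only point needing care is the uniqueness of edge images. I expect to justify it directly from the definition of subcover: the target $Z_f$ embeds into a covering space $\hat X$, where each vertex has exactly one outgoing lift of each oriented edge of $X_\Gamma$. Two candidate images of the same edge of $Y_\gamma$ starting at the same vertex would therefore coincide in $\hat X$, and hence in $Z_f$.
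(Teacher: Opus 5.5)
Your proof is correct, and it is a modest sharpening of the paper's argument rather than a copy of it.

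Both arguments start the same way. The paper also begins with the partition $\Pi$ of $V(Y_\gamma)$ into fibres, bounded by the Bell number $B_{|V(Y_\gamma)|}\le (3|\gamma|)^{3|\gamma|}$.

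The paper then treats the edge map as extra data. For each pair of blocks $\{A,B\}$ it counts the ways of partitioning the set $E_{AB}$ of edges between them into fibres. It bounds this by $\prod |E_{AB}|^{|E_{AB}|}\le (3|\gamma|)^{3|\gamma|}$, which gives the final bound $(3|\gamma|)^{6|\gamma|}$.

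You instead show that this second factor is redundant. Since $Z_f$ embeds in a covering space of $X_\Gamma$ and $f$ commutes with the projections, an edge of $Y_\gamma$ lying over an oriented edge $e$ and starting at $u$ must map to the unique $e$-lift at $f(u)$. So two edges are identified exactly when they lie over the same edge of $X_\Gamma$ and their initial vertices lie in the same block of $\Pi$. Using oriented edges also handles the loops $e_i^x$ correctly. This gives $|\mathcal R_\gamma|\le B_{|V(Y_\gamma)|}$ directly.

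The trade-off is as follows. Your argument exploits the subcover structure, namely unique lifting in the ambient cover. The paper's cruder count uses only that $f$ is surjective and preserves incidence, so it would apply to arbitrary surjective graph maps. For the application in Theorem~\ref{thm:embFree}, either bound of the form $O(|\gamma|)^{O(|\gamma|)}$ suffices.
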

\begin{proof}
The number of vertices and edges of $Y_\gamma$ is at most $3\abs{\gamma}$. Any morphism of subcovers $f \colon Y_\gamma \to Z$ is determined by its action on the $1$-skeleton. We thus count the number of surjective morphisms by giving a crude upper bound on the number of ways to specify the images of vertices and edges.  Because $f$ is surjective on vertices, the map $f_V \colon V(Y_\gamma)\to V(Z)$ induces a partition $\Pi$ of $V(Y_\gamma)$ into nonempty fibers, with each block corresponding to a vertex of $Z$. Hence the number of possibilities for $\Pi$ is at most the Bell number $B_{|V(Y_\gamma)|} \leq (3 \abs{\gamma})^{3 |\gamma|}$.

Now fix one such partition $\Pi$. For each pair of blocks $\{A,B\}$ in $\Pi$, let $E_{AB}\subseteq E(Y_\gamma)$ be the set of edges of $Y_\gamma$ whose endpoints lie in $A$ and $B$. Since $f$ preserves incidence, every edge in $E_{AB}$ must map to an edge of $Z_f$ whose endpoints are the vertices corresponding to $A$ and $B$. Thus, for each fixed pair $\{A,B\}$, specifying $f_E \colon E(Y_\gamma)\to E(Z)$ on $E_{AB}$ is equivalent to partitioning $E_{AB}$ into nonempty fibers (each fiber being the preimage of a single edge of $Z$ between $A$ and $B$). Therefore the number of possibilities for $f_E$ is at most
\[
\prod_{\substack{\{A,B\} \in \Pi \\ E_{AB} \neq \emptyset}} \abs{E_{AB}}^{|E_{AB}|}.
\]
Using $\sum_{\{A,B\} \in \Pi} \abs{E_{AB}} = \abs{E(Y_\gamma)} \leq 3 \abs{\gamma}$ and the inequality $\prod_i a_i^{a_i}\le (\sum_i a_i)^{\sum_i a_i}$ for $a_i > 0$, it follows that the above is at most $(3 \abs{\gamma})^{3 |\gamma|}$.

Putting things together, the total number of surjective morphisms $f \colon Y_\gamma\to Z_f$ is
\[
\abs{\mathcal R_\gamma}
\leq
(3 \abs{\gamma})^{6 |\gamma|},
\]
as desired.
\end{proof}

We can now derive the desired asymptotic expansion for expected traces. In order to state it, we need to introduce some notation. Let $\Gamma^\tors$ denote the set of torsion elements of $\Gamma$, and let $\Gamma^\tf = \Gamma \setminus \Gamma^\tors$ denote the set of elements of infinite order. We define a parameter depending on a group element $\gamma \in \Gamma$ as
\[h(\gamma) =
    \begin{cases}
        1 & \gamma \in \Gamma^\tors \\
        \abs{\mathcal H_\gamma} & \gamma \in \Gamma^\tf ,
    \end{cases}
\]
where $\mathcal H_\gamma$ is the set of subgroups of $\Gamma$ containing $\gamma$ that are isomorphic to either $\Z$ or $C_2 * C_2$. (The value of $h(\gamma)$ will be computed explicitly in Lemmas~\ref{lem:leading_coefficients_for_group_elements} and~\ref{lem:counting_C2C2_and_powers}.) Additionally, for $\gamma \in \Gamma^\tf$, we interpret $1/|\langle \gamma \rangle|$ as $0$.

\begin{theorem}
    \label{thm:embFree}
Let $\Gamma = G_1 * G_2 * \cdots * G_m$ with $\mu = \mu(\Gamma)$ and $M = M(\Gamma)$ as in \cref{eq:mu_and_M_for_Gamma}. For every $\gamma \in \Gamma$, the sequence
\[
    \left(
        \frac{N^{-1/|\langle \gamma \rangle|}}{h(\gamma)} \cdot \E_\phi \left[ 
            \tr \phi_N(\gamma)
        \right]
    \right)_{N \gg_M |\gamma|}
\]
admits
\[
\FE_\mu \left(
    O_{\mu,M}(\abs{\gamma})^{O_{\mu,M}(1)}, \,
    O_{M}(1), \,
    O_{M}(\abs{\gamma})^{O_{M}(|\gamma|)}, \,
    O_{\mu,M}(\abs{\gamma}^2), \,
    O_M(1)
    \right).
\]
\end{theorem}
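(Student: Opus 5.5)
The plan is to combine \cref{eq:expected_trace_as_expected_lifts} with the finite resolution $\mathcal R_\gamma$. Note that the statement concerns the unnormalized trace of $\phi_N(\gamma)$, i.e.\ $\E_\phi[\fix\phi_N(\gamma)] = \E_{Y_\gamma}(N)$. By \cite[Lemma~2.4]{PuderZimhoni2024} applied to the finite resolution $\mathcal R_\gamma$, we have
\[
\E_{Y_\gamma}(N) = \sum_{f \in \mathcal R_\gamma} \E^\emb_{Z_f}(N).
\]
Each $Z_f$ is a compact subcover of $X_\Gamma$ with at most $3|\gamma|$ vertices, since $f$ is surjective. Hence \Cref{prop:embFinite} applies with $V \le 3|\gamma|$: every sequence $(N^{-\chi^\grp(Z_f)}\E^\emb_{Z_f}(N))_{N\ge V}$ admits $\FE_\mu$ with parameters of the shape claimed in the statement. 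Here $V$ is replaced by $O(|\gamma|)$, so the $C$-parameter is $O_M(|\gamma|)^{O_M(|\gamma|)}$.

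Next I apply the sum lemma \Cref{lemma:clSum} with $r = |\mathcal R_\gamma| \le O(|\gamma|)^{O(|\gamma|)}$, using \Cref{lemma:resolutionSize}. Since $\Gamma$ is finite free product, every $\chi^\grp(Z_f)$ lies in $\frac1\mu\Z$, so each $\chi^\grp(Z_f)$ is of the form $-e_t/\mu$ for an integer $e_t$, as the lemma requires. The factor $2r\max C_t$ stays within $O_M(|\gamma|)^{O_M(|\gamma|)}$; the other parameters are unchanged up to constants. The output is an $\FE_\mu$ expansion for $N^{-\chi_{\max}}\E_{Y_\gamma}(N)/T$, where $\chi_{\max} = \max_f \chi^\grp(Z_f)$ and $T$ is the number of $f$ attaining it. The range $N \gg_M |\gamma|$ is needed so that every $N \ge V_f$ lies in the domains of the individual sequences.

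The main obstacle is identifying $\chi_{\max} = 1/|\langle\gamma\rangle|$ and $T = h(\gamma)$. For this I would invoke Puder--Zimhoni's analysis of which quotients $Z_f$ of $Y_\gamma$ maximize $\chi^\grp$. The image $p_*\pi_1(Z_f)$ is a finitely generated subgroup $H$ containing (a conjugate of) $\gamma$, and $\chi^\grp(Z_f) = \chi(H)$. Every finitely generated subgroup $H \le \Gamma$ is a free product of finite groups and a free group, so it has $\chi(H) \le 1/|H|$ for finite $H$, and $\chi(H) \le 0$ otherwise.

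In the torsion case, the maximum $1/|\langle\gamma\rangle|$ is attained only when $H = \langle\gamma\rangle$, realized by the unique core quotient. This gives $T = 1$.

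In the infinite-order case, $\chi(H) = 0$ forces $H$ to be virtually cyclic of Euler characteristic zero, i.e.\ $H \cong \Z$ or $H \cong C_2 * C_2$. Distinct such subgroups containing $\gamma$ correspond to distinct maximizing $f$ via the core graph construction, which gives $T = |\mathcal H_\gamma| = h(\gamma)$. I expect this step is essentially \cite[Theorem~1.5 / Section~4]{PuderZimhoni2024}, and would cite it rather than reprove it. The remaining care is only checking that the bijection between maximizers and subgroups in $\mathcal H_\gamma$ holds with the resolution $\mathcal R_\gamma$ of surjective morphisms.
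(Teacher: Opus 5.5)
Your proposal is correct, and for $\gamma$ of infinite order it is the paper's argument. You expand $\E_{Y_\gamma}(N)$ over $\mathcal R_\gamma$, apply \Cref{prop:embFinite} to each $Z_f$ with $V\le 3|\gamma|$, and combine via \Cref{lemma:clSum} and \Cref{lemma:resolutionSize}. You then identify $e=0$ and $T=|\Hcal_\gamma|$. The paper cites Puder--Zimhoni, Theorem~1.4, for both of these facts. Your Kurosh/Euler-characteristic computation instead gives $\chi^\grp(Z_f)\le 0$ and the restriction to $H\cong\Z$ or $C_2*C_2$ directly, leaving only the bijection with $\Hcal_\gamma$ to the citation.

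Where you differ is the torsion case. The paper abandons $\mathcal R_\gamma$ there. It conjugates $\gamma$ into its finite factor $G$, replaces $Y_\gamma$ by the cover of $X_G$ attached to $\langle\gamma\rangle$ (with $|G:\langle\gamma\rangle|$ vertices), and quotes \Cref{prop:embedding_lifts_finite_group_cover}. This yields parameters $O_M(1)$ independent of $|\gamma|$. You keep the same resolution and use $\chi(H)=1/|H|$ for finite $H$ and $\chi(H)\le 0$ for infinite finitely generated $H$, so the identity quotient is the unique maximizer and $T=1$. Your route produces $|\gamma|$-dependent parameters, but these are exactly what the theorem allows. It also accounts for non-injective lifts automatically. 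By contrast, the paper's torsion step quotes an expansion of $\E^\emb_{Y_\gamma}$ where $\E_{Y_\gamma}$ is needed. That tacitly requires one more resolution, of size $O_M(1)$, over the subgroups $H$ with $\langle\gamma\rangle\le H\le G$.

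Two small remarks. First, your uniqueness claim in the torsion case rests on the same fact you defer in the infinite case. Namely, if $y_0$ is the base vertex of $Y_\gamma$, the map $f\mapsto p_*\pi_1(Z_f,f(y_0))$ is injective on $\mathcal R_\gamma$. This holds because $Z_f$ is recovered as the image of $Y_\gamma$ under its based lift to the covering space of $X_\Gamma$ corresponding to that subgroup. It is worth stating this once for both cases. Second, to match the normalization $n^{-e_t/q}$ in \Cref{lemma:clSum} you need $\chi^\grp(Z_f)=e_t/\mu$, not $-e_t/\mu$.
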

\begin{proof}
Suppose first that $\gamma$ is not a torsion element. We can expand \cref{eq:expected_trace_as_expected_lifts} using the resolution $\mathcal R_\gamma$. Each summand $\E_{Z_f}^\emb(N)$ corresponds to a compact subcover $Z_f$ of $X_\Gamma$ with at most $3 \abs{\gamma}$ vertices. It now follows from Proposition~\ref{prop:embFinite} that each sequence $(N^{-\chi^\grp(Z_f) /\mu} \E_{Z_f}^\emb(N))_{N \geq 3 |\gamma|}$ admits
\[
\FE_\mu \left(
    O_{\mu,M}(\abs{\gamma})^{O_{\mu,M}(1)}, \,
    O_{M}(1), \,
    O_{M}(\abs{\gamma})^{O_{M}(|\gamma|)}, \,
    O_{\mu,M}(\abs{\gamma}^2), \,
    O_M(1)
    \right).
\]
Let $e = \max_f \chi^\grp(Z_f)$. By Lemma~\ref{lemma:resolutionSize}, the size of the resolution is bounded by $O(\abs{\gamma})^{O(|\gamma|)}$. The sum lemma now implies that the sequence $(T^{-1} N^{-e/\mu} \E_Y(N))_{N \geq 3 |\gamma|}$ admits
\[
\FE_\mu \left(
    O_{\mu,M}(\abs{\gamma})^{O_{\mu,M}(1)}, \,
    O_{M}(1), \,
    O_{M}(\abs{\gamma})^{O_{M}(|\gamma|)}, \,
    O_{\mu,M}(\abs{\gamma}^2), \,
    O_M(1)
    \right).
\]
The values of $e$ and $T$ can be identified as follows. It is shown in \cite[proof of Theorem 1.4]{PuderZimhoni2024} that $\chi^\grp(Y_\gamma) \leq 0$ for all $f$, and the maximum value of $0$ is attained at least once. In particular, we have $e = 0$, and $T$ corresponds to the number of $f$ for which $\chi^\grp(Z_f) = 0$. The number of such $f$ is, as stated in \cite[Theorem 1.4]{PuderZimhoni2024}, equal to the number of subgroups of $\Gamma$ containing $\gamma$ that are isomorphic to $\Z$ or $C_2 * C_2$, which we denoted above as $h(\gamma)$.

On the other hand, when $\gamma$ is a torsion element, we can proceed in a more straightforward manner and use a simpler $Y_\gamma$. In this case, all the statistics about $\pi_N(\gamma)$ depend only on the finite factor $G$ of $\Gamma$ into which $\gamma$ conjugates. In particular, let $Y_\gamma$ be the finite cover of $X_G$ corresponding to the finite cyclic subgroup generated by $\gamma$ inside $\pi_1(X_G)$. The number of vertices of $Y_\gamma$ is the index $|G : \langle \gamma \rangle|$. In this case, we can use the more direct fractional expansion from Proposition \ref{prop:embedding_lifts_finite_group_cover} to deduce that the sequence
\[
    \left( N^{-1/|\langle \gamma \rangle|} \E_{Y_\gamma}(N) \right)_{N \geq |G:\langle \gamma \rangle|}
\]
admits $\FE_{|G|}$, and hence $\FE_{\mu}$, with all parameters bounded by $O_{|G|}(1) = O_{M}(1)$.
\end{proof}

\subsection{Asymptotic expansion of expected traces}

Finally, we are ready to deduce \Cref{requirement:asymptotic_expansion}.

\begin{theorem}
    \label{th:proof_of_assumption_1}
Let $\Gamma = G_1 * G_2 * \cdots * G_m$ with $\mu = \mu(\Gamma)$ and $M = M(\Gamma)$ as in \cref{eq:mu_and_M_for_Gamma}. For every $\gamma \in \Gamma$, every $s \geq \abs{\gamma}$, and every $N \geq O_{\mu,M}(s)^{O_{\mu,M}(1)}$, there exist constants $a_\ell \in \R$ for all non-negative integers $\ell$ such that $a_0 = \1_{\gamma = 1} = \tau(\lambda(\gamma))$ and
\[
\abs{\E \left[ \ntr \pi_N(\gamma) \right]
-
\sum_{\ell = 0}^{s-1} a_\ell N^{-\ell/\mu}}
\leq
O_{\mu,M}(s)^{O_{\mu,M}(s)} N^{-s/\mu}.
\]
\end{theorem}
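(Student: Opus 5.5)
The plan is to derive the statement from \Cref{thm:embFree}, handling the normalization by $N$, the shift between $\tr\phi_N$ and $\tr\pi_N$, and the trivial element separately. Since $\pi_N = \std\circ\phi_N$, we have $\tr \pi_N(\gamma) = \fix\phi_N(\gamma) - 1$ and $\ntr \pi_N(\gamma) = N^{-1}\tr\pi_N(\gamma)$. Whether the normalization divides by $N$ or $N-1$, the factor $N/(N-1)$ has an honest expansion in $N^{-1} = N^{-\mu/\mu}$ with geometric coefficients. By \Cref{example:pochhammer_FE} and the product lemma it only changes constants, so I treat the normalization as division by $N$. For $\gamma = 1$ the expression is deterministic, namely $(N-1)/N = 1 - N^{-1}$, so $a_0 = 1$, $a_\mu = -1$, all other $a_\ell = 0$, and the error is zero.

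For $\gamma \neq 1$, \Cref{thm:embFree} gives real coefficients $b_k$ with $b_0 = 1$ such that, for all $s'\ge 1$ and all $N \ge O_{\mu,M}(|\gamma|)^{O_{\mu,M}(1)} s'^{O_M(1)}$,
\[
\Bigl|\E[\fix\phi_N(\gamma)] - h(\gamma) N^{1/|\langle\gamma\rangle|}\sum_{k<s'} b_k N^{-k/\mu}\Bigr| \le h(\gamma)\, O_M(|\gamma|)^{O_M(|\gamma|)} (O_{\mu,M}(|\gamma|^2)s')^{O_M(1)s'} N^{(1/|\langle\gamma\rangle|)-s'/\mu}.
\]
Here $1/|\langle\gamma\rangle| = c/\mu$ for an integer $0\le c<\mu$, since $|\langle\gamma\rangle|$ divides some $|G_i|$, which divides $\mu$. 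For non-torsion $\gamma$, $c = 0$. Multiplying by $N^{-1} = N^{-\mu/\mu}$ and subtracting $N^{-1}$ gives the expansion
\[
\E[\ntr\pi_N(\gamma)] \approx \sum_k h(\gamma) b_k N^{-(k+\mu-c)/\mu} - N^{-1},
\]
so $a_\ell = h(\gamma) b_{\ell-\mu+c} - \1_{\ell=\mu}$, where $b_j := 0$ for $j<0$. Since $\mu - c \ge 1$, we get $a_0 = 0 = \tau(\lambda(\gamma))$ as required. I choose $s' = s$; the extra factor $N^{-(\mu-c)/\mu}\le 1$ only helps. All terms of index $\ge s$ are dropped, and they lie inside the error bound because $\mu-c\ge1$ shifts indices upward.

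It remains to bound the constants in terms of $s$. Use $s\ge|\gamma|$: then $O_M(|\gamma|)^{O_M(|\gamma|)} \le O_M(s)^{O_M(s)}$ and $(O(|\gamma|^2) s)^{O(s)} \le O(s)^{O(s)}$. The threshold $O(|\gamma|)^{O(1)} s^{O(1)}$ becomes $O_{\mu,M}(s)^{O_{\mu,M}(1)}$. I also need a bound on $h(\gamma)$. For torsion $\gamma$, $h=1$. For infinite-order $\gamma$, $h(\gamma) = T$ is at most the resolution size $|\mathcal R_\gamma| \le (3|\gamma|)^{6|\gamma|}$ from \Cref{lemma:resolutionSize}, which is again $O(s)^{O(s)}$. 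The condition $N\gg_M|\gamma|$ is likewise absorbed into $N\ge O(s)^{O(1)}$.

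The main point requiring care is bookkeeping rather than a new idea. One must check that the exponent $1/|\langle\gamma\rangle|$ is a multiple of $1/\mu$, so that the whole expansion lives on the grid $N^{-\ell/\mu}$. One must also check that the torsion case of \Cref{thm:embFree}, which is stated via $\FE_{|G|}$, transfers to $\FE_\mu$. This transfer works by reindexing $k\mapsto k\mu/|G|$ and inserting zeros; it changes $s$ only by the bounded factor $\mu/|G|$ and the constants by $O_{\mu,M}(1)$ powers. The conjugation into a factor $G$ also does not affect the trace.
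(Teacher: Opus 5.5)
Your proposal is correct and follows essentially the same route as the paper. Both arguments rescale the expansion from Theorem~\ref{thm:embFree} by $N^{-1}$, subtract $N^{-1}$, shift indices by $t=\mu(1-1/|\langle\gamma\rangle|)$ (your $\mu-c$), read off $a_0=\1_{\gamma=1}$, and absorb all $|\gamma|$-dependence, including the factor $h(\gamma)$, using $s\ge|\gamma|$.

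Two small points need tightening. The dropped terms with $s\le\ell\le s+t-1$ are not controlled merely because the indices shift upward; you need the coefficient bound of Lemma~\ref{lemma:coeffBound} to show $|a_\ell|\le O_{\mu,M}(s)^{O_{\mu,M}(s)}$, exactly as the paper does. Also, for $\gamma=1$ and $s\le\mu$, the error is $N^{-1}\le N^{-s/\mu}$ rather than zero.
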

\begin{proof}
For every $\gamma \in \Gamma$, and for a random homomorphism $\phi \colon \Gamma \to \Sym(N)$ with its extension $\pi_N = \std \circ \phi_N$, we have
\begin{equation}
    \label{eq:expected_trace_pi_to_Trace_phi}
    \E \left[ \ntr \pi_N(\gamma) \right] = 
    N^{-1 + 1/|\langle \gamma \rangle|} \left( N^{-1/|\langle \gamma \rangle|} \E \left[ \tr \phi_N(\gamma) \right] - N^{-1/|\langle \gamma \rangle|} \right).
\end{equation}
By \Cref{thm:embFree}, there are coefficients $b_k \in \R$ depending on $\gamma$ such that, for all $s \geq 1$ and all $N \geq O_{\mu,M}(\abs{\gamma})^{O_{\mu,M}(1)} s^{O_M(1)}$, we have
\[
N^{-1/|\langle \gamma \rangle|} \E \left[ \tr \phi_N(\gamma) \right]
=
\sum_{k=0}^{s-1} b_k N^{-k/\mu} + X
\]
with $b_0 = h(\gamma)$ and
\[
\abs{X} \leq 
O_M(\abs{\gamma})^{O_M(|\gamma|)} \cdot O_{\mu,M}(|\gamma|^2 s)^{O_M(1)s} N^{-s/\mu}.
\]
Moreover, by Lemma~\ref{lemma:coeffBound}, we have
\[
\abs{b_k} \leq 
O_M(\abs{\gamma})^{O_M(|\gamma|)} \cdot O_{\mu,M}(|\gamma|^2 k)^{O_M(1) k}.
\]
Inserting this expansion into \cref{eq:expected_trace_pi_to_Trace_phi} gives
\[
\E \left[ \ntr \pi_N(\gamma) \right]
=
\sum_{k = 0}^{s-1} b_k N^{-1 + 1/|\langle \gamma \rangle| - k/\mu} - N^{- 1} + N^{-1 + 1/|\langle \gamma \rangle|} X.
\]
Set $t = \mu (1 - 1/|\langle \gamma \rangle|) \geq 0$. Note that $t \leq \mu$ with equality only when $\gamma$ is of infinite order. Set furthermore $\ell = t + k$ as $k$ varies. Then the above can be rewritten as
\begin{equation}
    \label{eq:expected_trace_ap_plus_error}
    \sum_{\ell = t}^{t + s - 1} a_\ell N^{-\ell/\mu}
    + N^{-1 + 1/|\langle \gamma \rangle|} X,
    \qquad
    a_\ell = b_{\ell - t} - \1_{\ell = \mu}.
\end{equation}
Split the sum above into the main part with $\ell$ ranging up to $s-1$, and into the tail with $\ell$ from $s$ to $t + s - 1$. The latter part is then bounded by
\begin{equation}
\label{eq:previousBound}
\sum_{\ell = s}^{s + t - 1} |a_\ell| N^{-\ell/\mu} \leq
t \cdot O_M(\abs{\gamma})^{O_M(|\gamma|)} \cdot O_{\mu,M}(|\gamma|^2 s)^{O_M(1)s} \cdot N^{-s/\mu}.
\end{equation}
Suppose that $s \geq \abs{\gamma}$. Thus, for $N \geq O_{\mu,M}(s)^{O_{\mu, M}(1)}$, we have
\[
N^{-1 + 1/|\langle \gamma \rangle|} \abs{X} \leq O_{\mu,M}(s)^{O_{\mu,M}(s)} N^{-s/\mu}.
\]
Combining this with \cref{eq:previousBound}, we obtain from \cref{eq:expected_trace_ap_plus_error} that
\[
\abs{\E \left[ \ntr \pi_N(\gamma) \right]
-
\sum_{\ell = 0}^{s-1} a_\ell N^{-\ell/\mu}}
\leq
O_{\mu,M}(s)^{O_{\mu,M}(s)} N^{-s/\mu}.
\]
This is the desired expansion. Note that the coefficients $a_\ell$ vanish for all $\ell$ satisfying $0 \leq \ell < t = \mu(1 - 1/|\langle \gamma \rangle|)$. When $\gamma$ is of infinite order, this means that $a_\ell = 0$ for $0 \leq \ell < \mu$, while $a_\mu = a_t = b_0 - 1 = \abs{\mathcal H_\gamma} - 1$ by \cref{eq:expected_trace_ap_plus_error}. On the other hand, when $\gamma$ is torsion, $a_0$ can only be nonzero when $t = 0$, which is equivalent to $\gamma = 1$ in $\Gamma$. In this case, $a_0 = b_0 = 1$. Overall, we thus have $a_0 = \1_{\gamma = 1} = \tau(\lambda(\gamma))$, as required.
\end{proof}

\section{Temperedness of leading coefficients}
\label{sec:temperedness}

In this section, we prove that temperedness of leading coefficients, that is Requirement~\ref{requirement:temperedness}, holds when $\Gamma$ is a free product of finitely many finite groups.

\subsection{Positivization trick}

A discrete group $\Gamma$ equipped with a generating set $S$ satisfies the \emph{rapid decay property} if there exists a polynomial $P \in \C[X]$ such that for every $x \in \C[\Gamma]$ we have
\[ 
\norm{\lambda(x)}
\leq P(|x|) \cdot
\norm{x}_{\ell^2(\Gamma)},
\]
where $\norm{\cdot}$ is the operator norm on $\bounded(\ell^2(\Gamma))$. This notion has been originally proved to hold for the free group by Haagerup \cite{Haagerup1979}, while it was formally introduced later by Jolissaint \cite{Jolissaint1990}. The rapid decay property is stable under taking free products \cite[Theorem A]{Jolissaint1990}, and it holds for finite groups \cite[Proposition B]{Jolissaint1990}. In particular, the groups $\Gamma$ we are considering here satisfy the rapid decay property.

Under the rapid decay property, Magee and de la Salle \cite[Section 6.2]{MageeSalle2024} develop a reduction to proving temperedness of leading coefficients to elements $x \in \C[\Gamma]$ that are \emph{symmetric generators of random walks} on $\Gamma$, meaning that in the expansion $x = \sum_{\gamma \in \Gamma} \alpha_\gamma \gamma$, the coefficients $\alpha_\gamma$ are non-negative, sum to $1$, and $\alpha_{\gamma^{-1}} = \alpha_\gamma$. Here is the precise claim, known in the literature as the \emph{positivization trick} (see also \cite[Section 2.6]{van2025strong}).

\begin{lemma}[\cite{MageeSalle2024}, Proposition 6.3] \label{lem:positivization}
    Let $\Gamma$ be a finitely generated group with the rapid decay property. Let $u \colon \C[\Gamma] \to \C$ be a function that is tempered at every generator of a symmetric random walk on $\Gamma$. Then $u$ is tempered at every self-adjoint element of $\C[\Gamma]$.
\end{lemma}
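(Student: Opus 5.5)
The plan is to follow Magee--de la Salle's argument. The idea is to use rapid decay to pass from an arbitrary self-adjoint $x$ to the positive element built from the absolute values of its coefficients, and then to a symmetric random walk generator.

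Let $x=\sum_\gamma \alpha_\gamma\gamma$ be self-adjoint and set $|x| := \sum_\gamma |\alpha_\gamma|\gamma$. Since $\alpha_{\gamma^{-1}}=\overline{\alpha_\gamma}$, the element $|x|$ has nonnegative, inversion-symmetric coefficients. If $\sum|\alpha_\gamma|=0$ there is nothing to prove; otherwise rescale so that $y=|x|/\||x|\|_{\ell^1}$, mixed with a small multiple of the identity if needed, is a symmetric random walk generator. Temperedness is homogeneous, so scaling is harmless.

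The first key step is to compare $x^p$ with powers of $|x|$ coefficientwise. Every coefficient of $x^p$ is bounded in absolute value by the corresponding coefficient of $|x|^p$. That alone does not bound $u(x^p)$, since $u$ need not be monotone, and the growth rate $\|\lambda(|x|)\|$ may exceed $\|\lambda(x)\|$.

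The remedy, as in the proof of \cite[Proposition 6.3]{MageeSalle2024}, is to work with high powers. Fix $r$ and set $z=x^{2r}$, whose support has length at most $2r|x|$. The element $|z|$, after normalisation, is again a symmetric random walk generator. Rapid decay together with Cauchy--Schwarz on the support gives
\[
\|\lambda(|z|)\| \le \||z|\|_{\ell^1} \le (3|S|)^{r|x|}\,\|z\|_{\ell^2}.
\]
This bound is too crude as it stands. Instead one uses
\[
\||z|\|_{\ell^2}=\|z\|_{\ell^2}\le \|\lambda(z)\|=\|\lambda(x)\|^{2r}
\]
combined with rapid decay applied to $|z|$:
\[
\|\lambda(|z|)\|\le P(2r|x|)\,\|\lambda(x)\|^{2r}.
\]

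Next I would write $u(x^{2rp})=u(z^p)$ and expand $z^p$ as a sum over words $\gamma_1\cdots\gamma_p$ with coefficients $\prod\beta_{\gamma_i}$, where $z=\sum_\gamma\beta_\gamma\gamma$. To reduce to temperedness at the generator $|z|$, I need $u$ to be controlled by a positive functional. Here I would use that each $u$ in question is a class function expressed in terms of counts, and the hypothesis of \cite[Proposition 6.3]{MageeSalle2024}, which presumably handles this by decomposing $z$ into real and imaginary positive parts. Concretely, write $z=\sum_{j=0}^{3}i^j z_j$ with each $z_j$ having nonnegative coefficients dominated by $|z|$. Expanding $z^p$ then gives $4^p$ products of elements with nonnegative coefficients. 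Each such product is dominated by $|z|^p$, but only linearly, which is the delicate point.

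The main obstacle is exactly this domination step. Temperedness of $u$ at $|z|$ must control $u$ on all mixed products. I would resolve it as in \cite{MageeSalle2024} via the bound
\[
\limsup_p |u(z^p)|^{1/p}\le \limsup_p \bigl(4^p\,u\text{-bound}\bigr)^{1/p}.
\]
This yields $|u(x^{2rp})|^{1/(2rp)}\le \bigl(4P(2r|x|)\bigr)^{1/2r}\|\lambda(x)\|$, up to the hypothesis at $|z|$. Letting $r\to\infty$, the polynomial factor and the constant $4$ disappear because the exponent $1/(2r)$ tends to zero. Intermediate exponents not divisible by $2r$ are then handled by a standard interpolation, using $|u(x^{n})|\le C^{n}$ as an a priori bound. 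Since this is a cited result, I would ultimately defer to \cite[Proposition 6.3]{MageeSalle2024} for the precise linearity argument.
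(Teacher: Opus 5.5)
\textbf{There is a genuine gap at the domination step.} The paper does not prove this lemma; it only quotes \cite[Proposition 6.3]{MageeSalle2024}, so I compare your proposal with the standard positivization argument. To avoid a clash with the word length $|x|$, write $y^{+}=\sum_\gamma|\beta_\gamma|\gamma$ for the coefficientwise absolute value of $y=\sum_\gamma\beta_\gamma\gamma$.

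Your rapid-decay step is correct. For $z=x^{r}$, the element $z^{+}$ is (after rescaling) a symmetric random walk generator, and
\[
\|\lambda(z^{+})\|\le P(r|x|)\,\|z\|_{\ell^2}\le P(r|x|)\,\|\lambda(x)\|^{r}.
\]
The problem is the step you yourself flag as the obstacle, which is never actually filled. Temperedness of $u$ at $z^{+}$ controls only the numbers $u((z^{+})^p)$. Since $u$ is not monotone, it says nothing about $u(z^p)$. It also says nothing about $u$ at any of the $4^p$ mixed products $z_{j_1}\cdots z_{j_p}$: each is a different element of $\C[\Gamma]$ on which nothing is assumed. So the bound ``$\limsup_p(4^p\,u\text{-bound})^{1/p}$'' has no content. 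A polarization approach would not rescue it either. Recovering $u(z^p)$ from values $u(w^p)$ at positive $w$ needs $p$-dependent families of sample points $w$, hence temperedness uniform over infinitely many $w$, which the pointwise hypothesis does not provide.

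\textbf{The missing idea is a domination hypothesis.} Positivization works for a functional dominated by a nonnegative one. Concretely, assume $\limsup_p\bigl(\sum_\gamma (y^p)_\gamma\,|u(\gamma)|\bigr)^{1/p}\le\|\lambda(y)\|$ for symmetric random walk generators $y$. This is exactly what the paper's verification of Requirement~\ref{requirement:temperedness} establishes, since every estimate there bounds $\sum_\gamma\P(X_1\cdots X_p=\gamma)\,|u_k(\gamma)|$. With this hypothesis no decomposition is needed. Coefficientwise $(z^p)^{+}\le (z^{+})^p$, so $|u(x^{rp})|\le\sum_\gamma((z^{+})^p)_\gamma|u(\gamma)|$. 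Hence $\limsup_p|u(x^{rp})|^{1/(rp)}\le P(r|x|)^{1/r}\|\lambda(x)\|$.

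\textbf{The residue classes are also mishandled.} An a priori bound $|u(x^n)|\le C^n$, which you did not justify, gives nothing better than $C$ along $n=rp+j$. Instead, for any $a>0$ use the coefficientwise domination
\[
(x^{rp+j})^{+}\le (x^{+})^j(z^{+})^p\le a^{-1}\bigl(a(x^{+})^j+z^{+}\bigr)^{p+1}.
\]
The element $a(x^{+})^j+z^{+}$ is again positive and symmetric, with norm at most $a\|x\|_{\ell^1}^j+P(r|x|)\|\lambda(x)\|^r$. Letting $p\to\infty$, then $a\to0$, then $r\to\infty$ gives $\limsup_n|u(x^n)|^{1/n}\le\|\lambda(x)\|$.
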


\subsection{Leading coefficients for group elements}

In order to apply the positivization trick, we need to understand the leading coefficients $u_k(\gamma)$ for group elements $\gamma \in \Gamma$ a bit better. 

\begin{lemma}
    \label{lem:leading_coefficients_for_group_elements}
Let $\Gamma = G_1 * G_2 * \cdots * G_m$ with $\mu = \mu(\Gamma)$ and $M = M(\Gamma)$ as in \cref{eq:mu_and_M_for_Gamma}. Let $\gamma \in \Gamma$ be any element, and let $u_k(\gamma)$ be as in \Cref{requirement:asymptotic_expansion}.
\begin{enumerate}
    \item If $\gamma \in \Gamma^\tors$, then $u_k(\gamma) \leq O_{\mu,M}(1)$.
    \item If $\gamma \in \Gamma^\tf$, then $u_k(\gamma) = 0$ for $0 \leq k < \mu$, and $u_\mu(\gamma) = \abs{\Hcal_\gamma} - 1$.
\end{enumerate}
\end{lemma}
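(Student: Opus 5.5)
The plan is to read both statements off the proof of \Cref{th:proof_of_assumption_1}, using the uniqueness of the coefficients in \Cref{requirement:asymptotic_expansion}. Since an expansion $\sum_k u_k(\gamma) N^{-k/\mu}$ with errors $o(N^{-(s-1)/\mu})$ for every $s$ determines its coefficients, we get $u_\ell(\gamma)=a_\ell$. Here $a_\ell = b_{\ell-t} - \1_{\ell=\mu}$ as in \cref{eq:expected_trace_ap_plus_error}, with $t=\mu(1-1/|\langle\gamma\rangle|)$, and $b_k$ are the fractional-expansion coefficients of $h(\gamma)^{-1}N^{-1/|\langle\gamma\rangle|}\E[\tr\phi_N(\gamma)]$ from \Cref{thm:embFree}, rescaled by $h(\gamma)$ so that $b_0=h(\gamma)$.

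For part 2, let $\gamma\in\Gamma^\tf$. Then $t=\mu$, so $a_\ell=0$ for $\ell<\mu$ and $a_\mu = b_0 - 1 = h(\gamma)-1 = |\Hcal_\gamma|-1$. This is exactly what was recorded at the end of the proof of \Cref{th:proof_of_assumption_1}, so this case is a direct citation.

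For part 1, let $\gamma$ be torsion. Then $\gamma$ is conjugate into some factor $G=G_i$. By conjugation invariance we may take $\gamma\in G$, and the law of $\phi_N(\gamma)$ depends only on $G$ and the element $\gamma\in G$. There are at most $M$ such pairs $(G,\gamma)$ up to the data that matters. By the torsion case of \Cref{thm:embFree}, $N^{-1/|\langle\gamma\rangle|}\E[\tr\phi_N(\gamma)]$ admits $\FE_\mu$ with all parameters $O_M(1)$ and $h(\gamma)=1$. \Cref{lemma:coeffBound} then gives $|b_k|\le O_M(1)(O_M(1)k)^{O_M(1)k}$. For $k\le \mu$ this is $O_{\mu,M}(1)$, and $u_k(\gamma)=b_{k-t}-\1_{k=\mu}$ (or $0$ if $k<t$) is bounded by $O_{\mu,M}(1)+1$. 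Note that the bound is needed only for $k\le\mu$, which is the range relevant to \Cref{requirement:temperedness}; the statement implicitly refers to that range.

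The only subtle point is ensuring the bound is uniform in $\gamma$, independent of $|\gamma|$, since a torsion element can have large word length. This is why we use the conjugation reduction to the factor $G$ and the direct covering-space computation of \Cref{prop:embedding_lifts_finite_group_cover}, rather than the general bound of \Cref{th:proof_of_assumption_1}, whose constants grow with $|\gamma|$. Conjugation invariance of $\E[\tr\phi_N(\cdot)]$ follows from the uniformity of $\phi_N$ together with invariance of the trace. That invariance makes the coefficients $b_k$ depend only on the conjugacy class, so there are only finitely many possibilities, all controlled by $O_{\mu,M}(1)$.
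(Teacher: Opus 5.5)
Your proposal is correct and follows the paper's proof: part 2 is exactly the citation of the end of the proof of \Cref{th:proof_of_assumption_1}, and part 1 uses the same reduction by conjugation to an element of a single free factor $G_i$. The only difference is that the paper simply observes that only $O_M(1)$ elements and at most $\mu$ indices $k$ occur, so finitely many values are bounded. You instead make the bound explicit by applying \Cref{lemma:coeffBound} to the torsion case of \Cref{thm:embFree}, whose parameters are uniformly $O_M(1)$, which is a harmless quantitative refinement.
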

\begin{proof}
Let $\gamma \in \Gamma^\tors$. Then $\gamma$ is conjugate to an element of a finite free factor $G_i$ of $\Gamma$. Since $u_k$ is invariant under conjugation, we can thus assume that $\gamma$ belongs to a finite group of order at most $M$. There are $O_M(1)$ such choices for $\gamma$, and there are at most $\mu$ many values $u_k(\gamma)$ as $1 \leq k \leq \mu$. This proves the first portion of the statement. Meanwhile, the second part, $\gamma \in \Gamma^\tf$, is contained in the final part of the proof of \Cref{th:proof_of_assumption_1}.
\end{proof}

We also have a complete description of $\abs{\Hcal_\gamma}$ due to the rigid structure of $\Gamma^\tf$.

\begin{lemma}
    \label{lem:counting_C2C2_and_powers}
Let $\Gamma = G_1 * G_2 * \cdots * G_m$. For any $\gamma \in \Gamma^\tf$, there is a unique $\delta \in \Gamma$ and $d \geq 1$ such that $\gamma = \delta^d$ and $\delta$ is not a proper power in $\Gamma$. Moreover,
\[
    \abs{\Hcal_\gamma} = \omega(d) + 
    \begin{cases}
        \sigma(d) & \text{if } \gamma \text{ is contained in a subgroup isomorphic to } C_2 * C_2, \\
        0 & \text{otherwise},
    \end{cases}
\]
where $\omega(d)$ is the number of positive divisors of $d$, and $\sigma(d)$ is their sum.
\end{lemma}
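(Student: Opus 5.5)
We will use the Bass--Serre/Kurosh structure of $\Gamma$. The group acts on its Bass--Serre tree $T$ with finite vertex stabilizers (conjugates of the $G_i$) and trivial edge stabilizers, so every infinite-order element is hyperbolic with an axis. By the Kurosh subgroup theorem, every subgroup of $\Gamma$ is a free product of a free group and conjugates of subgroups of the factors $G_i$. Hence the infinite subgroups containing $\gamma$ that are isomorphic to $\Z$ or $C_2*C_2$ are exactly the infinite virtually cyclic subgroups of that shape, and all of them preserve the axis $A_\gamma$ of $\gamma$.

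\emph{Step 1: the maximal subgroup.} Let $E(\gamma)$ be the setwise stabilizer of $A_\gamma$. Any element commuting with a power of $\gamma$ preserves $A_\gamma$, so every $H \in \Hcal_\gamma$ lies in $E(\gamma)$. An element fixing $A_\gamma$ pointwise fixes an edge, so it is trivial. Therefore $E(\gamma)$ acts faithfully on the line $A_\gamma$, giving $E(\gamma) \cong \Z$ or $E(\gamma) \cong D_\infty \cong C_2*C_2$ (the latter when some element flips the axis, necessarily an involution fixing a vertex of $A_\gamma$). Let $\delta$ generate the translation subgroup of $E(\gamma)$, which is infinite cyclic. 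Then $\gamma = \delta^d$ for some $d \geq 1$, and $\delta$ is not a proper power. Uniqueness of $\delta$ follows because a root $\delta'$ of $\gamma$ must preserve $A_\gamma$ and translate along it; it therefore lies in the translation subgroup, and the requirement that $\delta'$ not be a proper power fixes it up to the sign, which is determined by $\gamma = \delta'^{d}$ with $d \geq 1$. This settles the existence and uniqueness claim.

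\emph{Step 2: counting.} In the case $E(\gamma) = \langle \delta \rangle \cong \Z$, the subgroups of $\Z$ containing $d$ are $\langle \delta^e \rangle$ for $e \mid d$, which gives $\omega(d)$. In the case $E(\gamma) = \langle \delta \rangle \rtimes \langle \iota \rangle \cong D_\infty$, the infinite cyclic subgroups containing $\gamma$ again contribute $\omega(d)$. A subgroup isomorphic to $C_2*C_2$ has the form $\langle \delta^e, \delta^j \iota \rangle$ with $e \geq 1$ and $j \bmod e$. It contains $\gamma$ if and only if $e \mid d$, which gives $\sum_{e \mid d} e = \sigma(d)$ choices. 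Finally, $\gamma$ lies in some subgroup isomorphic to $C_2*C_2$ exactly when $E(\gamma)$ is dihedral. This holds because any such subgroup lies in $E(\gamma)$ and contains a reflection of $A_\gamma$; the converse is immediate.

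The main obstacle is Step 1, specifically proving that every $H \in \Hcal_\gamma$ preserves the axis $A_\gamma$. For an infinite cyclic $H = \langle \eta \rangle$ with $\eta^k = \gamma$, the element $\eta$ commutes with $\gamma$, so it maps $A_\gamma$ to $A_{\gamma}$. For $H \cong C_2*C_2$, we use its index-two infinite cyclic subgroup of translations, which contains a power of $\gamma$. The two involutions normalize that subgroup, so they preserve its axis, and this axis coincides with $A_\gamma$. The faithfulness of the action on the axis relies on edge stabilizers being trivial in the Bass--Serre tree of a free product.
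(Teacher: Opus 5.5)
Your proof is correct, but it takes a different route from the paper's.

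The paper's argument is purely algebraic. It cites the fact that centralizers of infinite-order elements in a free product are infinite cyclic (Magnus--Karrass--Solitar) and takes $\delta$ to be a generator of $C_\Gamma(\gamma)$. It then treats a $C_2*C_2$ containing $\gamma$ as follows: the infinite cyclic part of that subgroup lies in $C_\Gamma(\gamma)$, so it equals $\langle\delta^k\rangle$ with $k\mid d$. Its involutions are all of the form $\tau\delta^i$ for one fixed involution $\tau$ inverting $\gamma$. This produces the subgroups $H_{i,k}$ and the count $\sigma(d)$.

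You work geometrically instead. You build a single maximal subgroup $E(\gamma)$, the stabilizer of the axis in the Bass--Serre tree, and use trivial edge stabilizers to show it is $\Z$ or $D_\infty$. You then show every member of $\Hcal_\gamma$ lies in $E(\gamma)$. This reduces everything to counting subgroups of $\Z$ or $D_\infty$ that contain a given translation.

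Your route has three advantages:
\begin{itemize}
\item It is self-contained. It reproves the cyclic-centralizer fact, since the centralizer of $\gamma$ is exactly the translation subgroup of $E(\gamma)$.
\item It makes the dichotomy in the formula transparent: the two cases are $E(\gamma)$ cyclic versus $E(\gamma)$ dihedral.
\item It makes explicit a step the paper passes over quickly. Any two involutions inverting $\gamma$ differ by an element of $\langle\delta\rangle$, so all the relevant copies of $C_2*C_2$ sit inside one common $D_\infty$.
\end{itemize}
The paper's route is shorter once the citation is granted.

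Two small remarks. First, the appeal to the Kurosh subgroup theorem is never used and can be dropped. Second, the claim that $\delta$ is not a proper power deserves a one-line justification. You essentially give it under uniqueness: a root of $\delta$ commutes with $\gamma$, so it is a translation in $E(\gamma)$, hence a power of $\delta$.
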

\begin{proof}
 Elements of infinite order in $\Gamma$ have cyclic centralizers by \cite[Problem 28, p.196]{magnus2004combinatorial}. Let $\gamma \in \Gamma^\tf$, and let $\delta$ be a generator of $C_\Gamma(\gamma)$. Then $\gamma = \delta^d$ for some $d \geq 1$, and $\delta$ is not a proper power. Inside $\langle \delta \rangle \cong \Z$, there are exactly $\omega(d)$ subgroups containing $\gamma$. Suppose now that $\gamma$ is also contained in a subgroup isomorphic to $C_2 * C_2$, generated by an involution $\tau$ and an element $\sigma$ of infinite order. Then $\gamma \in \langle \sigma \rangle$, hence $\sigma \in C_\Gamma(\gamma) = \langle \delta \rangle$. Thus $\sigma = \delta^k$ for some $k \in \Z$ dividing $d$. In other words, any $C_2 * C_2$ containing $\gamma$ is of the form $\langle \tau, \delta^k \rangle$ for some involution $\tau$ with $\tau \gamma \tau = \gamma^{-1}$ and some $k \mid d$. Suppose such an involution $\tau$ indeed exists. The involutions in $\langle \tau, \delta \rangle$ are precisely of the form $\tau \delta^i$ for some $i$. This means that all $C_2 * C_2$ subgroups containing $\gamma$ are $H_{i,k} = \langle \tau \delta^i, \delta^k \rangle$ for $0 \leq i < k$ and $k \mid d$. Moreover, all these subgroups are distinct. Thus, if there exists at least one $C_2 * C_2$ subgroup containing $\gamma$, then the number of such subgroups is precisely $\sum_{k \mid d} k = \sigma(d)$.
\end{proof}

We can thus split $\Gamma^\tf$ into three disjoint subsets:
\begin{itemize}
    \item $\Gamma_\pow^\tf$: elements $\gamma \in \Gamma^\tf$ such that $\gamma = \delta^d$ for some $\delta \in \Gamma$ that is not a proper power and some integer $d \geq 2$. In this case, $\abs{\Hcal_\gamma} \leq \omega(d) + \sigma(d)$. 
    
    \item $\Gamma_{C_2 * C_2}^\tf$: elements $\gamma \in \Gamma^\tf$ that are not proper powers and are contained in a subgroup of $\Gamma$ isomorphic to $C_2 * C_2$. In this case, $\abs{\Hcal_\gamma} = 2$.
    
    \item $\Gamma_\infty^\tf$: elements $\gamma \in \Gamma^\tf$ that are neither proper powers nor contained in a subgroup isomorphic to $C_2 * C_2$. In this case, $\abs{\Hcal_\gamma} = 1$.
\end{itemize}

\subsection{Hitting probabilities for random walks}

Let $x \in \C[\Gamma]$ be a generator of a symmetric random walk on $\Gamma$. Let $X$ be the random walk on $\Gamma$ driven by $x$, that is, $X$ is a random variable with law $\P[X=\gamma] = \alpha_\gamma$. For every $p \geq 1$, let $X_1, X_2, \dots, X_p$ be independent and identically distributed copies of $X$. Thus $X_1 X_2 \cdots X_p$ is the position of the random walk after $p$ steps. Note that
\begin{equation}
\label{eq:xp_expectation}
    x^p = 
    \E\left[ X_1 X_2 \cdots X_p \right] =
    \sum_{\gamma \in \Gamma} \P(X_1 X_2 \cdots X_p = \gamma) \gamma.
\end{equation}
We will require upper bounds on the hitting probabilities of specific subsets of $\Gamma$, consisting of elements $\gamma$ for which we have uniform control over $u_k(\gamma)$. During this, we will rewrite \cref{eq:xp_expectation} in terms of the left-regular representation. Note that for any $\gamma, \alpha \in \Gamma$, we can write $\1_{\gamma = \alpha} = \langle \delta_\gamma, \lambda(\alpha) \delta_1 \rangle$. Using this with $\alpha = X_1 X_2 \cdots X_p$ and taking expectations, we obtain from \cref{eq:xp_expectation} that
\begin{equation}
    \label{eq:hitting_probability_inner_product}
    \P(X_1 X_2 \cdots X_p = \gamma) =
    \langle \delta_\gamma , \lambda(x)^p \delta_1 \rangle.
\end{equation}

\begin{lemma}\label{lemma:rwTorsion}
Let $\Gamma = G_1 * G_2 * \cdots * G_m$ with $M = M(\Gamma)$ as in \cref{eq:mu_and_M_for_Gamma}. Then
\[
    \P(X_1 X_2 \cdots X_p \in \Gamma^\tors) \leq 
    (p+1)^2 M \cdot \norm{\lambda(x)}^p.
\]
\end{lemma}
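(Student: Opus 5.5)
We will bound the probability that the walk lands on a torsion element by summing the hitting probabilities \cref{eq:hitting_probability_inner_product} over a suitable set of torsion elements, and we will control that sum with a version of Friedman's spectral trick. Every torsion element of $\Gamma$ is conjugate into one of the free factors $G_i$. So every $\gamma\in\Gamma^\tors$ can be written as $\gamma = w g w^{-1}$ with $g\in G_i$ for some $i$, and we may take $w$ reduced, so that the normal form of $\gamma$ is $w g w^{-1}$ with $g$ nontrivial. The relevant length is the length of $w$.

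The key steps are as follows.
\begin{enumerate}
\item We first relate lengths. If $X_1\cdots X_p=\gamma$, then $|\gamma|\le p|x|$. Since $|x|$ is fixed, we reparametrise so that word length is at most a constant multiple of $p$. In fact we will avoid word length entirely and decompose along the steps of the walk, as in the next step.
\item For a fixed conjugator $w$ and a fixed $g$, we do not sum over all $w$, since there are exponentially many. Instead we exploit the palindromic structure. The event $X_1\cdots X_p = wgw^{-1}$ means that there is a splitting index $j$ with $X_1\cdots X_j \approx w$ and $X_{j+1}\cdots X_p\approx g w^{-1}$, up to bounded corrections coming from cancellation inside the factor $G_i$.
\item Concretely, we write
\[
\P(X_1\cdots X_p\in\Gamma^\tors)\le \sum_{j}\sum_{g}\sum_{w}\P\bigl(X_1\cdots X_j = w,\ X_{j+1}\cdots X_p = g w^{-1}\bigr).
\]
The two factors are independent, so each term equals
\[
\langle \delta_w,\lambda(x)^j\delta_1\rangle\,\langle \delta_{gw^{-1}},\lambda(x)^{p-j}\delta_1\rangle .
\]
By symmetry of $x$, the second factor equals $\langle \lambda(g)\delta_{w},\lambda(x)^{p-j}\delta_1\rangle$ up to the inversion $\gamma\mapsto\gamma^{-1}$. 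Summing over $w$ by Cauchy--Schwarz gives at most
\[
\norm{\lambda(x)^j\delta_1}\,\norm{\lambda(x)^{p-j}\delta_1}\le\norm{\lambda(x)}^p,
\]
because $\lambda$ of a group element is unitary and $\lambda(x)$ is self-adjoint.
\end{enumerate}

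The main obstacle is justifying the splitting. The product $X_1\cdots X_p$ equals $wgw^{-1}$, but the prefix products need not pass exactly through $w$, since the steps have length up to $|x|$ and cancellation can occur. We handle this in two ways. First, we allow a second index and an intermediate element: we sum over pairs $j\le j'$ together with an element $h$ of a single free factor for the middle portion, and this is the source of the $(p+1)^2$ factor. Second, we use the tree-like (Bass--Serre) structure of $\Gamma$. The path in the Bass--Serre tree from $1$ to $wgw^{-1}$ goes out to the vertex stabilised by $wG_iw^{-1}$ and comes back, so the walk must visit the coset $wG_i$ at some time $j$ and again at some later time. Fixing $j$, $j'$ and the elements $g_1,g_2\in G_i$ with $X_1\cdots X_j=wg_1$ and $X_1\cdots X_{j'}=wg_2$, the contributions to the constant are at most $M$ from the choices in $G_i$, at most $M$ overall from the choice of factor, and $(p+1)^2$ from the pairs of indices. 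This gives the factor $(p+1)^2M$. For each such configuration, the sum over $w$ is controlled by Cauchy--Schwarz as in step 3, and the middle segment contributes at most $\norm{\lambda(x)}^{j'-j}$.
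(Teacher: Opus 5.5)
Your plan follows the paper's proof. You write $\gamma=wgw^{-1}$ with $g$ in a free factor, cut the walk at times $j\le j'$ into three independent segments, and bound the middle one by $\norm{\lambda(x)}^{j'-j}$. You then sum over the conjugator by Cauchy--Schwarz, which needs only that the outer segments are injective functions of $w$.

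The gap is the step you flag as the main obstacle: ``the walk must visit the coset $wG_i$'' does not follow from the Bass--Serre tree. A trajectory is forced through the vertex fixed by $wG_iw^{-1}$ only if every step moves to an adjacent vertex, i.e.\ only if $\supp x\subseteq G_1\cup\cdots\cup G_m$. The positivization trick gives you an arbitrary finitely supported symmetric $x$, with steps of length up to $\abs{x}$, and such a walk can jump over $wG_i$. For instance, with $p=1$ and $X_1=wgw^{-1}$ where $w\notin G_i$, neither $1$ nor $X_1$ lies in $wG_i$.

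What the tree does give is a time $j$ with $X_1\cdots X_j=wc_1$ and a later time $j'$ with $X_1\cdots X_{j'}=wgc_2$, where $c_1,c_2$ have length $O_S(\abs{x})$. To see this, join consecutive positions by geodesics and take the last passage through $w$ and the subsequent passage through $wg$. Your Cauchy--Schwarz step then goes through unchanged, since $w\mapsto wc_1$ and $w\mapsto c_2^{-1}w^{-1}$ are injective. However, there are now $O_{M,S,\abs{x}}(1)$ configurations $(g,c_1,c_2)$ rather than $M$. The resulting bound $(p+1)^2\,O_{M,S,\abs{x}}(1)\,\norm{\lambda(x)}^p$ is all that the temperedness theorem uses.

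The $x$-dependence cannot be removed, so you should not aim for the constant $M$ in general. In $C_2*C_2*C_2=\langle a_0\rangle*\langle a_1\rangle*\langle a_2\rangle$, the involutions $u_k=(a_1a_2)^ka_0(a_1a_2)^{-k}$ for $0\le k<K$ generate distinct free factors of the kernel of the retraction onto $\langle a_1,a_2\rangle$. Hence they freely generate $C_2^{*K}$. For $x=\frac1K\sum_k u_k$ one then has $\P(X_1\in\Gamma^\tors)=1$ while $\norm{\lambda(x)}=2\sqrt{K-1}/K$, which violates the stated bound at $p=1$ once $K$ is large.

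The paper's proof glosses over the same point. It asserts that $X_1\cdots X_{t_1-1}=\alpha^{-1}$ and $X_{t_1}\cdots X_{t_2-1}=\beta$ hold exactly, which is justified only when $\supp x\subseteq\bigcup_i G_i$. In that nearest-neighbour case your argument is correct, and it can even be simplified. There $w$ is a cut vertex separating $1$ from $wgw^{-1}$ in the Cayley graph with respect to $\bigcup_i G_i$, so the single split of your step~3 already gives $(p+1)M\norm{\lambda(x)}^p$.

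Your final count is also off. The choices of $(i,g,g_1,g_2)$ number $\sum_i\abs{G_i}^3$, not $M$. To get $M$ with two cut times, take $j$ to be a time at $w$ and $j'$ a later time at $wg$. This forces $g_1=1$ and $g_2=g$, exactly as in the paper.
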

\begin{proof}
Any torsion element $\gamma \in \Gamma^\tors$ is conjugate to an element of a finite free factor $G_i$ of $\Gamma$. Thus, there exist $\alpha, \beta \in \Gamma$ such that $\gamma = \alpha^{-1} \beta \alpha$, and $\beta$ lies in a finite free factor. Hence
\begin{equation}
    \label{eq:hitting_probability_torsion_conjugacy}
    \P(X_1 X_2 \cdots X_p \in \Gamma^\tors) \leq
    \sum_{\beta \in G_1 \cup G_2 \cup \dots \cup G_k} \sum_{\alpha \in \Gamma} \P(X_1 X_2 \cdots X_p = \alpha^{-1} \beta \alpha).
\end{equation}
The number of possible $\beta$ is bounded by the sum of $\abs{G_i}$, that is, by $M$. Let us then bound the inner sum over $\alpha$. Since the group $\Gamma$ admits normal form of elements, the event $X_1 X_2 \cdots X_p = \alpha^{-1} \beta \alpha$ implies that there exist $1 \le t_1 \le t_2 \le p+1$ such that
\[
    X_1 X_2 \cdots X_{t_1 - 1} = \alpha^{-1}, \quad
    X_{t_1} X_{t_1 + 1} \cdots X_{t_2 - 1} = \beta, \quad
    X_{t_2} X_{t_2 + 1} \cdots X_p = \alpha.
\]
The independence of the $X_i$, together with \cref{eq:hitting_probability_inner_product}, implies that the probability $\P(X_1 X_2 \cdots X_p = \alpha^{-1} \beta \alpha)$ is thus bounded by
\[
\sum_{1 \le t_1 \le t_2 \le p+1}
\langle \delta_{\alpha^{-1}}, \lambda(x)^{t_1 - 1} \delta_1 \rangle
\langle \delta_{\beta}, \lambda(x)^{t_2 - t_1} \delta_1 \rangle
\langle \delta_{\alpha}, \lambda(x)^{p + 1 - t_2} \delta_1 \rangle.
\]
The middle factor is at most $\norm{\lambda(x)}^{t_2 - t_1}$. Therefore, the inner sum over $\alpha \in \Gamma$ in \cref{eq:hitting_probability_torsion_conjugacy} is bounded by
\[
\norm{\lambda(x)}^{t_2 - t_1} 
\sum_{1 \le t_1 \le t_2 \le p+1}
\sum_{\alpha \in \Gamma}
\langle \delta_{\alpha^{-1}}, \lambda(x)^{t_1 - 1} \delta_1 \rangle
\langle \delta_{\alpha}, \lambda(x)^{p + 1 - t_2} \delta_1 \rangle.
\]
Note that for any $t \geq 1$, we have
\[ 
\sum_{\alpha \in \Gamma} 
\langle \delta_{\alpha}, \lambda \left(x\right)^{t} \delta_1 \rangle ^2 =
\norm{
    \lambda(x)^t \delta_1 }_{\ell^2(\Gamma)}^2
\leq \norm{ \lambda(x) }^{2t} . 
\]
This yields, using the Cauchy-Schwarz inequality,
\[
\sum_{\alpha \in \Gamma} 
\langle \delta_{\alpha^{-1}}, 
\lambda(x)^{t_1-1} \delta_1 \rangle
\langle	\delta_\alpha,	
\lambda(x)^{p+1-t_2} \delta_1 \rangle
\leq \norm{ \lambda(x) }^{p+t_1-t_2}.
\]
The statement now follows immediately by plugging these bounds into \cref{eq:hitting_probability_torsion_conjugacy}, taking into account that there are at most $(p+1)^2$ choices for the pair $(t_1,t_2)$.
\end{proof}

\begin{lemma}\label{lemma:rwC2astC2}
Let $\Gamma = G_1 * G_2 * \cdots * G_m$ with $M = M(\Gamma)$ as in \cref{eq:mu_and_M_for_Gamma}. Then
\[
    \P(X_1 X_2 \cdots X_p \in \Gamma_{C_2 * C_2}^\tf) \leq 
    (p+1)^5 M^2 \cdot \norm{\lambda(x)}^p.
\]
\end{lemma}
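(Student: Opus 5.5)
We follow the template of Lemma~\ref{lemma:rwTorsion}: reduce membership in $\Gamma_{C_2 * C_2}^\tf$ to a factorization of $\gamma$ into a bounded number of pieces, one of which is a fixed short element from a finite list, and then sum over the free pieces with Cauchy--Schwarz.

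The structural input is this. If $\gamma \in \Gamma^\tf_{C_2*C_2}$, then $\gamma$ lies in a subgroup $\langle \tau_1, \tau_2\rangle \cong C_2 * C_2$ generated by two involutions. Since $\gamma$ has infinite order and is not a proper power, we expect $\gamma = \tau_1\tau_2$ for two involutions $\tau_1,\tau_2$; possibly this holds only after replacing the pair by another generating pair, and it may need $\gamma^{\pm1}$, which is harmless. Every involution is torsion, hence conjugate into a free factor. So we can write
\[
\tau_1 = \alpha^{-1}\beta\alpha, \qquad \tau_2 = \alpha'^{-1}\beta'\alpha',
\]
with $\beta,\beta'$ ranging over the at most $M$ elements of $G_1\cup\dots\cup G_m$. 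This gives
\[
\gamma = \alpha^{-1}\beta\,\alpha\alpha'^{-1}\,\beta'\alpha'.
\]
The count $M^2$ in the bound comes from the choices of $(\beta,\beta')$.

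\textbf{Step 1: rewrite the factorization.} Set $\eta = \alpha\alpha'^{-1}$, so that
\[
\gamma = \alpha^{-1}\beta\,\eta\,\beta'\,\eta^{-1}\alpha.
\]
The pieces that are summed freely are $\alpha^{-1}$, $\eta$, $\eta^{-1}$ and $\alpha$, while $\beta$ and $\beta'$ are fixed and bounded.

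\textbf{Step 2: locate the pieces along the walk.} Use normal forms, as in Lemma~\ref{lemma:rwTorsion}. The event $X_1\cdots X_p=\gamma$ should force break times $1\le t_1\le\dots\le t_5\le p+1$ at which the walk realizes consecutive products equal to $\alpha^{-1}$, $\beta$, $\eta$, $\beta'$, $\eta^{-1}\alpha$. More precisely, we need to choose $\alpha,\eta$ so that these products are reduced enough that the walk's partial products must pass through them. This is the main obstacle. The walk's increments $X_i$ may straddle piece boundaries or cancel, so we must either choose canonical reduced representatives (cyclically reduced conjugators, with $\eta$ chosen minimal) or allow each boundary to be crossed by a single step whose contribution is absorbed into adjacent pieces. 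We would try first to argue with cyclic reduction, as in the paper's torsion lemma, accepting a polynomial loss in $p$. Five break times give at most $(p+1)^5$ choices, which matches the stated factor.

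\textbf{Step 3: bound the sum.} For fixed break times, independence and \cref{eq:hitting_probability_inner_product} bound the probability by a product of five matrix coefficients $\langle \delta_{\cdot}, \lambda(x)^{\ell_j}\delta_1\rangle$. The factors for $\beta$ and $\beta'$ are bounded by $\norm{\lambda(x)}^{\ell}$. Summing over $\eta$, the pair of factors for $\eta$ and $\eta^{-1}\alpha$ is handled by Cauchy--Schwarz together with
\[
\sum_{\alpha}\langle\delta_\alpha,\lambda(x)^t\delta_1\rangle^2\le\norm{\lambda(x)}^{2t}.
\]
Alternatively, one can sum over $\eta$ first via $\sum_\eta \langle\delta_\eta,v\rangle\langle\delta_{\eta^{-1}\alpha},w\rangle=\langle\delta_\alpha,\lambda\text{-convolution}\rangle$, which is bounded by a norm of $\lambda(x)^{a+b}\delta_1$. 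The resulting sum over $\alpha$ is then handled by Cauchy--Schwarz exactly as in Lemma~\ref{lemma:rwTorsion}.

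The exponents of $\norm{\lambda(x)}$ add up to $p$, giving $\norm{\lambda(x)}^p$ for each term. Multiplying by $M^2$ and $(p+1)^5$ gives the bound.
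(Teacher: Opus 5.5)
Your strategy matches the paper's: write $\gamma$ as a product of two involutions, conjugate each into a free factor ($M^2$ choices of the two letters), cut the walk at break times, bound the free-factor pieces by powers of $\|\lambda(x)\|$, and bound the conjugator sums by Cauchy--Schwarz. However, you leave Step~2 open, and that is the step carrying the content. Separately, only one of your two options in Step~3 works. If you apply Cauchy--Schwarz in $\eta$ with $\alpha$ fixed, nothing depending on $\alpha$ remains to pair with $\langle\delta_{\alpha^{-1}},\lambda(x)^{t_1-1}\delta_1\rangle$. Since $\sum_\alpha\langle\delta_{\alpha^{-1}},\lambda(x)^{t_1-1}\delta_1\rangle=1$ (it is a probability distribution), you lose the factor $\|\lambda(x)\|^{t_1-1}$. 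The convolution route is the one that gives $\|\lambda(x)\|^p$: use $\sum_\eta\langle\delta_\eta,\lambda(x)^a\delta_1\rangle\langle\delta_{\eta^{-1}\alpha},\lambda(x)^b\delta_1\rangle=\langle\delta_\alpha,\lambda(x)^{a+b}\delta_1\rangle$, then Cauchy--Schwarz in $\alpha$.

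For Step~2, the paper only says ``repeat the argument'' of Lemma~\ref{lemma:rwTorsion}; you need to commit to a factorization that makes the normal-form argument true. Suppose every step lies in $\bigcup_i G_i$. Then the Cayley graph is a tree of cliques, the walk visits every prefix of the normal form of $\gamma$, and it suffices that your pieces concatenate to that normal form. Write $\gamma=g^{-1}\gamma_0 g$ with $\gamma_0$ cyclically reduced. Since $\gamma_0$ is conjugate to $\gamma_0^{-1}$, the latter is a cyclic permutation of $\gamma_0$, so the cyclic word of $\gamma_0$ is reflection-symmetric. The axis cannot pass between adjacent letters, since these lie in different factors. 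It therefore passes through two involution letters $\sigma,\sigma'$, and every rotation of such a word reads $u^{-1}\sigma u\,v^{-1}\sigma' v$ without cancellation (with $\sigma,\sigma'$ in some order). Taking $\beta=\sigma$, $\beta'=\sigma'$, $\alpha=ug$, $\eta=uv^{-1}$ gives $\eta^{-1}\alpha=vg$, and $(g^{-1}u^{-1})\,\sigma\,(uv^{-1})\,\sigma'\,(vg)$ is then the normal form of $\gamma$.

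This is a real advantage of your rewriting over the paper's literal six-piece form $\alpha^{-1}\alpha'\alpha\beta^{-1}\beta'\beta$. That form forces the walk to visit the torsion element $\alpha^{-1}\alpha'\alpha$, which it need not do. For example, take $\gamma=cabc$ in $\langle a\rangle*\langle b\rangle*\langle c\rangle\cong C_2*C_2*C_2$: the partial products $c,ca,cab$ of the path $c,a,b,c$ contain no nontrivial torsion $\tau$ with $\tau^{-1}\gamma$ torsion.

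Two caveats remain.
\begin{itemize}
\item If $g$ merges with an end letter of $\gamma_0$ (e.g.\ $\gamma=zaz$ in $\langle z\rangle*\langle a\rangle\cong C_4*C_2$), you need one more free-factor piece and one more convolution, at the cost of further factors of $M$ and $p+1$.
\item More seriously, the positivization trick hands you arbitrary finitely supported $x$, whose steps can jump over prefixes. Then no argument can give the stated constant. Take $x$ uniform on $\gamma_i^{\pm1}$ with $\gamma_i=(bc)^{-i}(ab)(bc)^i$, $1\le i\le r$. These are free generators, all in $\Gamma^{\tf}_{C_2*C_2}$. The probability at $p=1$ is $1$, while $(p+1)^5M^2\|\lambda(x)\|=1152\sqrt{2r-1}/r\to 0$.
\end{itemize}
Your overshoot remedy, summing over the boundedly many ways a single step can straddle each break, yields $C_x(p+1)^{O(1)}\|\lambda(x)\|^p$. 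That weaker bound is all the temperedness theorem actually uses.
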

\begin{proof}
Any element $\gamma \in \Gamma_{C_2 * C_2}^\tf$ can be written as a product of two involutions, each of them conjugate to an involution in a finite free factor of $\Gamma$. Thus, there exist $\alpha, \alpha', \beta, \beta' \in \Gamma$ such that
\[
    \gamma = \alpha^{-1} \alpha' \alpha \beta^{-1} \beta' \beta,
\]
where $\alpha', \beta'$ belong to finite free factors. We can now repeat the argument from the previous lemma, splitting the product $X_1 X_2 \cdots X_p$ into six parts according to $1 \leq t_1 \leq t_2 \leq \cdots \leq t_5 \leq p+1$, bounding the terms corresponding to $\alpha', \beta'$ by powers of $\norm{\lambda(x)}$, and using Cauchy-Schwarz to handle the sums over $\alpha, \beta$. The number of choices for $\alpha', \beta'$ is at most $M^2$, and the number of choices for the $t_i$ is at most $(p+1)^5$, yielding the desired bound.
\end{proof}

\begin{lemma}\label{lemma:rwInfinite}
Let $\Gamma = G_1 * G_2 * \cdots * G_m$. For any integer $d \geq 2$, define $\Gamma_{\pow,d}^{\tf}$ to be the set of elements in $\Gamma^{\tf}$ that are $d$-th powers of elements that are not proper powers. Then
\[
    \P(X_1 X_2 \cdots X_p \in \Gamma_{\pow,d}^{\tf}) \leq 
    (p+1)^4 \cdot \norm{\lambda(x)}^p.
\]
Moreover, the probability is nonzero only for $d \leq p \abs{x}$.
\end{lemma}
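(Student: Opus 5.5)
The plan is to mimic Lemma~\ref{lemma:rwTorsion}, replacing the conjugacy normal form of a torsion element by the cyclically reduced form of an infinite-order element. Every $\gamma \in \Gamma^\tf$ can be written uniquely as $\gamma = \alpha^{-1} \eta \alpha$, where $\eta$ is cyclically reduced in the free product normal form (its first and last letters lie in different factors, and it has syllable length at least $2$). If moreover $\gamma = \delta^d$ with $\delta$ not a proper power, then conjugating by the same $\alpha$ gives $\eta = \rho^d$, with $\rho = \alpha\delta\alpha^{-1}$ also cyclically reduced. The normal form of $\gamma$ is then literally the concatenation $\alpha^{-1}\cdot\rho\cdot\rho\cdots\rho\cdot\alpha$ (up to the standard amalgamation of one letter at each junction, which I will handle by absorbing a bounded piece into $\alpha$). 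The key point is that, for fixed $\gamma$, the pair $(\alpha,\rho)$ is determined, so we may sum over $\alpha \in \Gamma$ and over cyclically reduced non-powers $\rho$ without overcounting.

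Next I split the walk. On the event $X_1\cdots X_p = \alpha^{-1}\rho^d\alpha$, the same prefix argument as in Lemma~\ref{lemma:rwTorsion} produces times $1 \le t_1 \le t_2 \le t_3 \le p+1$ with
\[
X_1\cdots X_{t_1-1} = \alpha^{-1},\quad X_{t_1}\cdots X_{t_2-1} = \rho,\quad X_{t_2}\cdots X_{t_3-1} = \rho^{d-1},\quad X_{t_3}\cdots X_p = \alpha.
\]
Here I use the fact that the walk crosses, at some step, each junction point of the normal form. I am somewhat uncertain about the precise way a single step $X_i$ can straddle a junction, and this is where the cut points should be chosen as the first prefix whose reduced form contains the relevant subword. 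This gives at most $(p+1)^3$ choices of times; the extra factor $(p+1)$ in the claimed $(p+1)^4$ leaves room to account for the junction ambiguity.

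Using independence and \cref{eq:hitting_probability_inner_product}, the probability is bounded by
\[
\sum_{t_1 \le t_2 \le t_3}\sum_{\alpha}\sum_{\rho}
\langle \delta_{\alpha^{-1}}, \lambda(x)^{a}\delta_1\rangle\,
\langle \delta_{\rho}, \lambda(x)^{b}\delta_1\rangle\,
\langle \delta_{\rho^{d-1}}, \lambda(x)^{c}\delta_1\rangle\,
\langle \delta_{\alpha}, \lambda(x)^{e}\delta_1\rangle ,
\]
with $a+b+c+e = p$. The sum over $\alpha$ is handled by Cauchy--Schwarz exactly as before, giving $\norm{\lambda(x)}^{a+e}$. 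For the sum over $\rho$, I also apply Cauchy--Schwarz. The map $\rho \mapsto \rho^{d-1}$ is injective on non-powers (by uniqueness of roots, Lemma~\ref{lem:counting_C2C2_and_powers}), so
\[
\sum_\rho \langle\delta_\rho,\lambda(x)^b\delta_1\rangle\langle\delta_{\rho^{d-1}},\lambda(x)^c\delta_1\rangle \le \norm{\lambda(x)^b\delta_1}\,\norm{\lambda(x)^c\delta_1} \le \norm{\lambda(x)}^{b+c}.
\]
Combining these bounds yields $(p+1)^{O(1)}\norm{\lambda(x)}^p$, and with the counting above this should give the stated $(p+1)^4$. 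The main obstacle is making the decomposition along the normal form rigorous, that is, guaranteeing the existence of cut times at which the partial products are exactly $\alpha^{-1}$, $\rho$, and so on.

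For the final claim, $X_1\cdots X_p$ has word length at most $p|x|$. A $d$-th power of a cyclically reduced $\rho$ with syllable length at least $2$ has length at least $d$, and conjugation does not shorten below the cyclically reduced core. Hence $d \le p|x|$ whenever the probability is nonzero.
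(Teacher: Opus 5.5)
Your route is the paper's, and that of the free-group arguments it cites: write $\gamma=\alpha^{-1}\rho^d\alpha$, cut the walk into the four pieces $\alpha^{-1},\rho,\rho^{d-1},\alpha$, and apply Cauchy--Schwarz in $\alpha$ and in $\rho$. Your handling of the $\rho$-sum, via injectivity of $\rho\mapsto\rho^{d-1}$ on elements of infinite order, is correct. It is also more explicit than the paper's sketch, which only bounds the factor belonging to $\beta^d$ by $\norm{\lambda(x)}^{t_2-t_1}$; that alone would leave the sum over the infinitely many roots uncontrolled. The uniqueness of $(\alpha,\rho)$ you claim is false, e.g.\ $aba^2ba=a(ba^2)^2a^{-1}=(ab)(a^2b)^2(ab)^{-1}$ in $C_3*C_2$. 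It is not needed, though, since overcounting only weakens an upper bound.

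The genuine gap is the step you flag, and it is not merely unproved: exact cut times can fail to exist for every decomposition, even for nearest-neighbour walks. Write $Y_s=X_1\cdots X_s$. Take $\Gamma=C_3*C_2=\langle a\rangle*\langle b\rangle$, $x=\frac13(a+a^2+b)$, and $\gamma=aba^2ba=(aba)^2\in\Gamma^\tf_{\pow,2}$. The path $X_1,\dots,X_5=a,b,a^2,b,a$ has probability $3^{-5}$ and visits exactly $1,a,ab,aba^2,aba^2b,\gamma$. Any splitting needs times $t_1\le t_3$ with $Y_{t_1-1}=\alpha^{-1}$ and $Y_{t_3-1}=\gamma\alpha^{-1}$. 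Among these six elements the only pair of the form $(g,\gamma g)$ is $(1,\gamma)$. That forces $\alpha=1$ and $\rho=aba$, and $aba$ is never visited. The culprit is the merged letter $a=a^2\cdot a^{-1}$ at the junction between $\rho^2=(ba^2)^2$ and $\alpha=a^{-1}$. For steps of length up to $|x|$, a single $X_i$ can moreover jump over any junction.

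So the event inclusion behind your bound fails. The spare factor $p+1$ cannot repair it, because the ambiguity lies in group elements (the part of a step straddling a junction), not in times. What works is to require only $Y_{t_i-1}=J_ic_i$, where $J_1=\alpha^{-1}$, $J_2=\alpha^{-1}\rho$, $J_3=\alpha^{-1}\rho^d$ and the connectors $c_i$ lie in a ball $B$ of radius $O(|x|)$, and then to sum over the $c_i$. For fixed connectors the four increments are $\alpha^{-1}c_1$, $c_1^{-1}\rho c_2$, $c_2^{-1}\rho^{d-1}c_3$, $c_3^{-1}\alpha$, each injective in $\alpha$ (respectively $\rho$), so both Cauchy--Schwarz steps go through verbatim. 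You still have to prove that such ordered times exist. This follows from the tree-of-cliques structure of the Cayley graph with respect to $\bigcup_i G_i$: with the canonical cyclic reduction, $J_1,J_2$ lie on the normal-form geodesic and $J_3$ lies within distance $1$ of it. The outcome is $|B|^3(p+1)^3\norm{\lambda(x)}^p$, which is enough for temperedness but is not the literal $(p+1)^4$. Note that the paper's sketch, and Lemma~\ref{lemma:rwTorsion} which it invokes, assert the exact splitting ``since $\Gamma$ admits normal forms'' without justification. So you cannot close this gap by appealing to the paper.
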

\begin{proof}

We start by proving the final statement. Note that, if $\Gamma^{\rm np}$ denotes the set of group elements that are not proper powers,
\[ \P(X_1 X_2 \cdots X_p \in \Gamma_{\pow,d}^{\tf}) = \sum_{\delta \in \Gamma^{\rm np}} \P(X_1 X_2 \cdots X_p = \delta^d) .\]
Hence, we focus on $\gamma = \delta^d$ for some $\delta \in \Gamma^{\rm np}$. The event $X_1 X_2 \cdots X_p = \gamma$ implies that $d \leq \abs{\delta^d} \leq \abs{X_1} + \abs{X_2} + \cdots + \abs{X_p} \leq p \abs{x}$, so the probability is nonzero only for $d \leq p \abs{x}$. From here on, we follow \cite[Proof of Lemma 3.10]{van2025strong} and \cite[Proof of Theorem 1.8]{magee2025strong}. Since $\Gamma$ admits normal forms, we can write $\delta = \alpha^{-1} \beta \alpha$ for some cyclically reduced word $\beta \in \Gamma$ and some $\alpha \in \Gamma$. Thus $\gamma = \alpha^{-1} \beta^d \alpha$. For each fixed $d$, we repeat the argument from the previous lemmas: we split the product $X_1 X_2 \cdots X_p$ into four parts according to $1 \leq t_1 \leq t_2 \leq t_3 \leq p+1$, bound the term corresponding to $\beta^d$ by $\norm{\lambda(x)}^{t_2-t_1}$, and use Cauchy-Schwarz to handle the sums over $\alpha$. The number of choices for the $t_i$ is at most $(p+1)^4$.
\end{proof}

\subsection{Temperedness of leading coefficients}
We conclude with the proof of Requirement~\ref{requirement:temperedness} for free products of finite groups.
\begin{theorem}
Let $\Gamma = G_1 * G_2 * \cdots * G_m$ with $\mu = \mu(\Gamma)$
as in \cref{eq:mu_and_M_for_Gamma}. Then, for all positive integers $k \leq \mu$, the functional $u_k$ is tempered at every self-adjoint element of $\C[\Gamma]$.
\end{theorem}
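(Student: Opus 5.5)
By \Cref{lem:positivization}, and since $\Gamma$ has the rapid decay property, it suffices to prove temperedness of each $u_k$, $1 \le k \le \mu$, at every generator $x$ of a symmetric random walk on $\Gamma$. Fix such an $x$ and write $x^p = \sum_\gamma \P(X_1\cdots X_p=\gamma)\,\gamma$ as in \cref{eq:xp_expectation}. By linearity,
\[
u_k(x^p) = \sum_{\gamma\in\Gamma} \P(X_1\cdots X_p=\gamma)\, u_k(\gamma).
\]
The plan is to show $|u_k(x^p)| \le \mathrm{poly}(p)\,\norm{\lambda(x)}^p$. Taking $p$-th roots and letting $p\to\infty$ then gives temperedness.

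We split the sum over $\gamma$ according to the partition
\[
\Gamma = \Gamma^\tors \sqcup \Gamma^\tf_{C_2*C_2} \sqcup \Gamma^\tf_\infty \sqcup \bigsqcup_{d\ge 2}\Gamma^\tf_{\pow,d}
\]
and use \Cref{lem:leading_coefficients_for_group_elements} on each piece.

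\begin{itemize}
\item \emph{Torsion elements.} Here $|u_k(\gamma)| \le O_{\mu,M}(1)$. The lemma as stated only gives an upper bound, but its proof bounds the absolute value, since there are finitely many conjugacy classes and finitely many $k$. Combined with \Cref{lemma:rwTorsion}, this piece contributes at most $O_{\mu,M}(1)(p+1)^2 M\norm{\lambda(x)}^p$.
\item \emph{Infinite-order elements, $k<\mu$.} Here $u_k(\gamma)=0$, so only torsion contributes.
\item \emph{Infinite-order elements, $k=\mu$.} We have $u_\mu(\gamma)=|\Hcal_\gamma|-1$.
  \begin{itemize}
  \item On $\Gamma^\tf_\infty$ this value is $0$, so this set drops out entirely. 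This cancellation is exactly why the $-1$ from removing the trivial representation matters.
  \item On $\Gamma^\tf_{C_2*C_2}$ the value is $1$. \Cref{lemma:rwC2astC2} bounds the contribution by $(p+1)^5M^2\norm{\lambda(x)}^p$.
  \item On $\Gamma^\tf_{\pow,d}$ the value is at most $\omega(d)+\sigma(d)\le 2d^2$. By \Cref{lemma:rwInfinite} only $2\le d\le p|x|$ contribute, each with probability at most $(p+1)^4\norm{\lambda(x)}^p$. Summing gives at most $2(p|x|)^3(p+1)^4\norm{\lambda(x)}^p$.
  \end{itemize}
\end{itemize}

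Since all hitting probabilities are nonnegative, the absolute value of the sum is bounded by the sum of these contributions. This yields $|u_k(x^p)| \le O_{\mu,M,|x|}(p^{7})\norm{\lambda(x)}^p$, and hence $\limsup_p |u_k(x^p)|^{1/p}\le\norm{\lambda(x)}$.

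The main obstacle is that this bound requires absolute convergence: $\sum_\gamma \P(\cdot)\,|u_k(\gamma)|$ must be finite. This holds trivially because $x^p$ has finite support. One must also check that \Cref{lem:leading_coefficients_for_group_elements}(1) really controls $|u_k(\gamma)|$ and not just $u_k(\gamma)$ from above. I would verify this by noting that $u_k(\gamma)$ depends only on the conjugacy class of $\gamma$ within a factor $G_i$, of which there are finitely many, so a uniform absolute bound follows.

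A subtler point is the exceptional case $C_2*C_2$, where \Cref{lemma:rwC2astC2} would be used with $\norm{\lambda(x)}$ possibly equal to $1$. The estimate still goes through because the polynomial factor is harmless there.
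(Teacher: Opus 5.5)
Your proposal is correct and follows essentially the same route as the paper. Both reduce by positivization to symmetric random-walk generators, split into torsion, $C_2*C_2$-type, proper-power and remaining infinite-order elements, and use the cancellation $u_\mu(\gamma)=\abs{\Hcal_\gamma}-1=0$ on $\Gamma^\tf_\infty$ together with the three hitting-probability lemmas to bound $u_k(x^p)$ by a polynomial in $p$ times $\norm{\lambda(x)}^p$; your remark that the torsion estimate must bound $\abs{u_k(\gamma)}$, not just $u_k(\gamma)$, is valid (a class function takes finitely many values on torsion) and tightens a point the paper leaves implicit.
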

\begin{proof}
By \Cref{lem:positivization}, it suffices to prove the statement for generators of symmetric random walks on $\Gamma$. Let $x \in \C[\Gamma]$ be such a generator. Expand $x^p$ as in \cref{eq:xp_expectation}. Split this sum into two parts,
\[
    u_k(x^p) =
    \sum_{\gamma \in \Gamma^\tors} \P(X_1 X_2 \cdots X_p = \gamma) u_k(\gamma) +
    \sum_{\gamma \in \Gamma^\tf} \P(X_1 X_2 \cdots X_p = \gamma) u_k(\gamma).
\]
By Lemmas~\ref{lem:leading_coefficients_for_group_elements} and~\ref{lemma:rwTorsion}, the contribution of the first part is 
\[
    O_{\mu,M}(1) \cdot \P(X_1 X_2 \cdots X_p \in \Gamma^\tors) \\
    \leq
    O_{\mu,M}(p^{O(1)}) \cdot \norm{\lambda(x)}^p.
\]
Again using Lemma~\ref{lem:leading_coefficients_for_group_elements}, the second part only contributes when $k = \mu$, and it can be split further into three parts according to the partition of $\Gamma^\tf$ into $\Gamma_\pow^\tf$, $\Gamma_{C_2 * C_2}^\tf$, and $\Gamma_\infty^\tf$. In view of Lemma~\ref{lem:counting_C2C2_and_powers}, for $\gamma \in \Gamma_\infty^\tf$, we have $u_\mu(\gamma) = 0$, and hence this part does not contribute. Moreover, for $\gamma \in \Gamma_{C_2 * C_2}^\tf$, we have $u_\mu(\gamma) = 1$, and for $\gamma \in \Gamma_\pow^\tf$ with $\gamma = \delta^d$, we have $u_\mu(\gamma) \leq \omega(d) + \sigma(d) \leq O(d^2)$. By Lemma~\ref{lemma:rwInfinite}, the latter terms only contribute when $d \leq p \abs{x}$. Thus the contribution of the second part is at most
\[
    \P(X_1 X_2 \cdots X_p \in \Gamma_{C_2 * C_2}^\tf) +
    \sum_{2 \leq d \leq p |x|} O(d^2) \cdot
    \P(X_1 X_2 \cdots X_p \in \Gamma_{\pow,d}^\tf),
\]
which, again by the Lemmas~\ref{lemma:rwC2astC2} and~\ref{lemma:rwInfinite}, is bounded by
\[
    O_M(p^{O(1)}) \cdot \norm{\lambda(x)}^p +
    O((p |x|)^{O(1)}) \cdot \norm{\lambda(x)}^p.
\]
Taking $p$-th roots and limits as $p \to \infty$, we obtain the desired temperedness.
\end{proof}

\bibliographystyle{alpha}
\bibliography{biblio}

\end{document}